\begin{document}
\newtheorem{theorem}{Theorem}[section]
\newtheorem{lemma}[theorem]{Lemma}
\newtheorem{corollary}[theorem]{Corollary}
\newtheorem{proposition}[theorem]{Proposition}
\theoremstyle{definition}
\newtheorem{definition}[theorem]{Definition}
\newtheorem{example}[theorem]{Example}
\newtheorem{remark}[theorem]{Remark}
\newtheorem{pdef}[theorem]{Proposition-Definition}
\newtheorem{condition}[theorem]{Condition}
\renewcommand{\labelenumi}{{\rm(\alph{enumi})}}
\renewcommand{\theenumi}{\alph{enumi}}
\baselineskip=14pt

\newcommand {\emptycomment}[1]{} 

\newcommand{\nc}{\newcommand}
\newcommand{\delete}[1]{}

\nc{\todo}[1]{\tred{To do:} #1}

\nc{\tred}[1]{\textcolor{red}{#1}}
\nc{\tblue}[1]{\textcolor{blue}{#1}}
\nc{\tgreen}[1]{\textcolor{green}{#1}}
\nc{\tpurple}[1]{\textcolor{purple}{#1}}
\nc{\tgray}[1]{\textcolor{gray}{#1}}
\nc{\torg}[1]{\textcolor{orange}{#1}}
\nc{\tmag}[1]{\textcolor{magenta}}
\nc{\btred}[1]{\textcolor{red}{\bf #1}}
\nc{\btblue}[1]{\textcolor{blue}{\bf #1}}
\nc{\btgreen}[1]{\textcolor{green}{\bf #1}}
\nc{\btpurple}[1]{\textcolor{purple}{\bf #1}}

	\nc{\mlabel}[1]{\label{#1}}  
	\nc{\mcite}[1]{\cite{#1}}  
	\nc{\mref}[1]{\ref{#1}}  
	\nc{\meqref}[1]{\eqref{#1}}  
	\nc{\mbibitem}[1]{\bibitem{#1}} 

\delete{
	\nc{\mlabel}[1]{\label{#1}  
		{ {\small\tgreen{\tt{{\ }(#1)}}}}}
	\nc{\mcite}[1]{\cite{#1}{\small{\tt{{\ }(#1)}}}}  
	\nc{\mref}[1]{\ref{#1}{\small{\tred{\tt{{\ }(#1)}}}}}  
	\nc{\meqref}[1]{\eqref{#1}{{\tt{{\ }(#1)}}}}  
	\nc{\mbibitem}[1]{\bibitem[\bf #1]{#1}} 
}

\nc{\cm}[1]{\textcolor{red}{Chengming:#1}}
\nc{\yy}[1]{\textcolor{blue}{Yanyong: #1}}
\nc{\zy}[1]{\textcolor{yellow}{Zhongyin: #1}}
\nc{\li}[1]{\textcolor{purple}{#1}}
\nc{\lir}[1]{\textcolor{purple}{Li:#1}}


\nc{\tforall}{\ \ \text{for all }}
\nc{\hatot}{\,\widehat{\otimes} \,}
\nc{\complete}{completed\xspace}
\nc{\wdhat}[1]{\widehat{#1}}

\nc{\ts}{\mathfrak{p}}
\nc{\mts}{c_{(i)}\ot d_{(j)}}

\nc{\NA}{{\bf NA}}
\nc{\LA}{{\bf Lie}}
\nc{\CLA}{{\bf CLA}}

\nc{\cybe}{CYBE\xspace}
\nc{\nybe}{NYBE\xspace}
\nc{\ccybe}{CCYBE\xspace}

\nc{\ndend}{pre-Novikov\xspace}
\nc{\calb}{\mathcal{B}}
\nc{\rk}{\mathrm{r}}
\newcommand{\g}{\mathfrak g}
\newcommand{\h}{\mathfrak h}
\newcommand{\pf}{\noindent{$Proof$.}\ }
\newcommand{\frkg}{\mathfrak g}
\newcommand{\frkh}{\mathfrak h}
\newcommand{\Id}{\rm{Id}}
\newcommand{\gl}{\mathfrak {gl}}
\newcommand{\ad}{\mathrm{ad}}
\newcommand{\add}{\frka\frkd}
\newcommand{\frka}{\mathfrak a}
\newcommand{\frkb}{\mathfrak b}
\newcommand{\frkc}{\mathfrak c}
\newcommand{\frkd}{\mathfrak d}
\newcommand {\comment}[1]{{\marginpar{*}\scriptsize\textbf{Comments:} #1}}

\newcommand{\da}{\Delta_{\alpha}}
\newcommand{\db}{\Delta_{\beta}}

\nc{\disp}[1]{\displaystyle{#1}}
\nc{\bin}[2]{ (_{\stackrel{\scs{#1}}{\scs{#2}}})}  
\nc{\binc}[2]{ \left (\!\! \begin{array}{c} \scs{#1}\\
    \scs{#2} \end{array}\!\! \right )}  
\nc{\bincc}[2]{  \left ( {\scs{#1} \atop
    \vspace{-.5cm}\scs{#2}} \right )}  
\nc{\ot}{\otimes}
\nc{\sot}{{\scriptstyle{\ot}}}
\nc{\otm}{\overline{\ot}}
\nc{\ola}[1]{\stackrel{#1}{\la}}

\nc{\scs}[1]{\scriptstyle{#1}} \nc{\mrm}[1]{{\rm #1}}

\nc{\dirlim}{\displaystyle{\lim_{\longrightarrow}}\,}
\nc{\invlim}{\displaystyle{\lim_{\longleftarrow}}\,}

\nc{\bfk}{{\bf k}} \nc{\bfone}{{\bf 1}}
\nc{\rpr}{\circ}
\nc{\dpr}{{\tiny\diamond}}
\nc{\rprpm}{{\rpr}}

\nc{\mmbox}[1]{\mbox{\ #1\ }} \nc{\ann}{\mrm{ann}}
\nc{\Aut}{\mrm{Aut}} \nc{\can}{\mrm{can}}
\nc{\twoalg}{{two-sided algebra}\xspace}
\nc{\colim}{\mrm{colim}}
\nc{\Cont}{\mrm{Cont}} \nc{\rchar}{\mrm{char}}
\nc{\cok}{\mrm{coker}} \nc{\dtf}{{R-{\rm tf}}} \nc{\dtor}{{R-{\rm
tor}}}
\renewcommand{\det}{\mrm{det}}
\nc{\depth}{{\mrm d}}
\nc{\End}{\mrm{End}} \nc{\Ext}{\mrm{Ext}}
\nc{\Fil}{\mrm{Fil}} \nc{\Frob}{\mrm{Frob}} \nc{\Gal}{\mrm{Gal}}
\nc{\GL}{\mrm{GL}} \nc{\Hom}{\mrm{Hom}} \nc{\hsr}{\mrm{H}}
\nc{\hpol}{\mrm{HP}}  \nc{\id}{\mrm{id}} \nc{\im}{\mrm{im}}

\nc{\incl}{\mrm{incl}} \nc{\length}{\mrm{length}}
\nc{\LR}{\mrm{LR}} \nc{\mchar}{\rm char} \nc{\NC}{\mrm{NC}}
\nc{\mpart}{\mrm{part}} \nc{\pl}{\mrm{PL}}
\nc{\ql}{{\QQ_\ell}} \nc{\qp}{{\QQ_p}}
\nc{\rank}{\mrm{rank}} \nc{\rba}{\rm{RBA }} \nc{\rbas}{\rm{RBAs }}
\nc{\rbpl}{\mrm{RBPL}}
\nc{\rbw}{\rm{RBW }} \nc{\rbws}{\rm{RBWs }} \nc{\rcot}{\mrm{cot}}
\nc{\rest}{\rm{controlled}\xspace}
\nc{\rdef}{\mrm{def}} \nc{\rdiv}{{\rm div}} \nc{\rtf}{{\rm tf}}
\nc{\rtor}{{\rm tor}} \nc{\res}{\mrm{res}} \nc{\SL}{\mrm{SL}}
\nc{\Spec}{\mrm{Spec}} \nc{\tor}{\mrm{tor}} \nc{\Tr}{\mrm{Tr}}
\nc{\mtr}{\mrm{sk}}

\nc{\ab}{\mathbf{Ab}} \nc{\Alg}{\mathbf{Alg}}

\nc{\BA}{{\mathbb A}} \nc{\CC}{{\mathbb C}} \nc{\DD}{{\mathbb D}}
\nc{\EE}{{\mathbb E}} \nc{\FF}{{\mathbb F}} \nc{\GG}{{\mathbb G}}
\nc{\HH}{{\mathbb H}} \nc{\LL}{{\mathbb L}} \nc{\NN}{{\mathbb N}}
\nc{\QQ}{{\mathbb Q}} \nc{\RR}{{\mathbb R}} \nc{\BS}{{\mathbb{S}}} \nc{\TT}{{\mathbb T}}
\nc{\VV}{{\mathbb V}} \nc{\ZZ}{{\mathbb Z}}


\nc{\calao}{{\mathcal A}} \nc{\cala}{{\mathcal A}}
\nc{\calc}{{\mathcal C}} \nc{\cald}{{\mathcal D}}
\nc{\cale}{{\mathcal E}} \nc{\calf}{{\mathcal F}}
\nc{\calfr}{{{\mathcal F}^{\,r}}} \nc{\calfo}{{\mathcal F}^0}
\nc{\calfro}{{\mathcal F}^{\,r,0}} \nc{\oF}{\overline{F}}
\nc{\calg}{{\mathcal G}} \nc{\calh}{{\mathcal H}}
\nc{\cali}{{\mathcal I}} \nc{\calj}{{\mathcal J}}
\nc{\call}{{\mathcal L}} \nc{\calm}{{\mathcal M}}
\nc{\caln}{{\mathcal N}} \nc{\calo}{{\mathcal O}}
\nc{\calp}{{\mathcal P}} \nc{\calq}{{\mathcal Q}} \nc{\calr}{{\mathcal R}}
\nc{\calt}{{\mathcal T}} \nc{\caltr}{{\mathcal T}^{\,r}}
\nc{\calu}{{\mathcal U}} \nc{\calv}{{\mathcal V}}
\nc{\calw}{{\mathcal W}} \nc{\calx}{{\mathcal X}}
\nc{\CA}{\mathcal{A}}

\nc{\fraka}{{\mathfrak a}} \nc{\frakB}{{\mathfrak B}}
\nc{\frakb}{{\mathfrak b}} \nc{\frakd}{{\mathfrak d}}
\nc{\oD}{\overline{D}}
\nc{\frakF}{{\mathfrak F}} \nc{\frakg}{{\mathfrak g}}
\nc{\frakm}{{\mathfrak m}} \nc{\frakM}{{\mathfrak M}}
\nc{\frakMo}{{\mathfrak M}^0} \nc{\frakp}{{\mathfrak p}}
\nc{\frakS}{{\mathfrak S}} \nc{\frakSo}{{\mathfrak S}^0}
\nc{\fraks}{{\mathfrak s}} \nc{\os}{\overline{\fraks}}
\nc{\frakT}{{\mathfrak T}}
\nc{\oT}{\overline{T}}
\nc{\frakX}{{\mathfrak X}} \nc{\frakXo}{{\mathfrak X}^0}
\nc{\frakx}{{\mathbf x}}
\nc{\frakTx}{\frakT}      
\nc{\frakTa}{\frakT^a}        
\nc{\frakTxo}{\frakTx^0}   
\nc{\caltao}{\calt^{a,0}}   
\nc{\ox}{\overline{\frakx}} \nc{\fraky}{{\mathfrak y}}
\nc{\frakz}{{\mathfrak z}} \nc{\oX}{\overline{X}}

\font\cyr=wncyr10


\title{Infinite-dimensional pre-Lie bialgebras via affinization of pre-Novikov bialgebras}

\author{Yue Li}
\address{School of Mathematics, Hangzhou Normal University,
Hangzhou, 311121, China}
\email{liyuee@stu.hznu.edu.cn}

\author{Yanyong Hong}
\address{School of Mathematics, Hangzhou Normal University,
Hangzhou, 311121, China}
\email{yyhong@hznu.edu.cn}

\subjclass[2010]{17A30, 17A60, 17B38, 17D25}
\keywords{pre-Novikov algebra, pre-Lie algebra, right Novikov dialgebra, affinization, pre-Novikov bialgebra, Yang-Baxter equation}
\footnote{
Corresponding author: Y. Hong (yyhong@hznu.edu.cn).
}
\begin{abstract}
In this paper, we show that there is a pre-Lie algebra structure on the tensor product of a pre-Novikov algebra and a right Novikov dialgebra and the tensor product of a pre-Novikov algebra and a special right Novikov algebra on the vector space of Laurent polynomials being a pre-Lie algebra characterizes the pre-Novikov algebra. The latter is called the affinization of a pre-Novikov algebra. Moreover, we extend this construction of pre-Lie algebras and the affinization of pre-Novikov algebras to the context of bialgebras. We show that there is a completed pre-Lie bialgebra structure on the tensor product of a pre-Novikov
bialgebra and a quadratic $\ZZ$-graded right Novikov algebra. Moreover, a pre-Novikov bialgebra can be characterized by the fact that its affinization by a quadratic $\ZZ$-graded right Novikov algebra on the vector space of Laurent polynomials gives an infinite-dimensional completed pre-Lie bialgebras. Note that  the reason why we choose a quadratic right Novikov algebra instead of a right Novikov dialgebra with a special bilinear form is also given.
Furthermore, we construct symmetric completed solutions of the $S$-equation in the induced pre-Lie algebra by symmetric solutions of the pre-Novikov Yang-Baxter equation in a pre-Novikov algebra.
\end{abstract}

\maketitle
\section{Introduction}
Pre-Lie algebras (or left-symmetric algebras) are a class of Lie-admissible algebras whose commutators are Lie algebras, which
play important roles in many fields in mathematics and mathematical
physics such as convex homogenous cones \cite{V}, affine manifolds and affine structures on Lie groups \cite{Ko,Mat}, deformation of associative algebras \cite{Ger} and so on.
Novikov algebras are pre-Lie algebras whose right multiplications are commutative. Novikov algebras appeared in the study of Hamiltonian
operators in the formal variational calculus \cite{GD1, GD2} and Poisson brackets of hydrodynamic type \cite{BN}. Moreover, by \cite{X1}, Novikov algebras correspond to a class of Lie conformal algebras which describe the singular part of operator product expansion of chiral fields in conformal field
theory \cite{K1}.

The definition of pre-Novikov algebras was introduced in \cite{HBG} in the study of Novikov bialgebras. It was shown in \cite{HBG} that a pre-Novikov algebra naturally  provides a skew-symmetric solution of the Novikov Yang-Baxter equation in the associated Novikov algebra and thus a Novikov bialgebra.
Moreover, there are close relationships between pre-Novikov algebras and other algebra structures. For example, pre-Novikov algebras are a class  of $L$-dendriform algebras \cite{BLN}, correspond to a class of left-symmetric conformal algebras \cite{HL, XH}, and are closely related to Zinbiel algebras  with derivations (see \cite{HBG, KMS} ).

It was shown in \cite{BN} that there is a natural Lie algebra structure on the tensor product of a Novikov algebra and the vector space ${\bf k}[t,t^{-1}]$, which is called the affinization of a Novikov algebra, and this affinization also characterizes the Novikov algebra. It can be stated in detail as follows.
\begin{theorem}\label{constr-Lie}\cite{BN}
Let $A$ be a vector space with a binary operation $\circ$. Define a binary operation on $A[t,t^{-1}]:=A\otimes \bfk[t,t^{-1}]$ by
   \vspace{-.1cm}
    \begin{eqnarray}\label{infLie1}
        [at^i, bt^j]=i(a\circ b)t^{i+j-1}-j(b\circ a)t^{i+j-1},\;\;\;
        a,b\in A, i, j\in\mathbb{Z},
      \vspace{-.1cm}
    \end{eqnarray}
    where $a t^i:=a\otimes t^{i}$. Then $(A[t,t^{-1}], [\cdot,\cdot])$
    is a Lie algebra if and only if $(A, \circ)$ is a Novikov algebra.
    \end{theorem}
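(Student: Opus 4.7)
The plan is to verify the Lie-algebra axioms on homogeneous tensors $at^i, bt^j, ct^k$ and to read off exactly what conditions on $\circ$ are imposed. Skew-symmetry is immediate from \eqref{infLie1}, since swapping $(a,i)\leftrightarrow(b,j)$ simply negates the bracket independently of any property of $\circ$. By bilinearity it therefore suffices to check the Jacobi identity on homogeneous tensors.

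First I would compute the Jacobiator
\[
J(i,j,k) := [[at^i,bt^j],ct^k] + [[bt^j,ct^k],at^i] + [[ct^k,at^i],bt^j]
\]
by applying \eqref{infLie1} twice, yielding an expression of the shape $J(i,j,k) = P(i,j,k)\,t^{i+j+k-2}$ in which $P(i,j,k)\in A$ is a polynomial of total degree at most $2$ in $i,j,k$ whose coefficients are length-three $\circ$-products of $a,b,c$. Collecting monomials, the coefficient of $i^2$ works out to $(a\circ b)\circ c - (a\circ c)\circ b$, with the coefficients of $j^2$ and $k^2$ being the cyclic permutations. The coefficient of $ij$ equals $(a\circ b)\circ c - (b\circ a)\circ c - a\circ(b\circ c) + b\circ(a\circ c)$, with $jk$ and $ik$ again cyclic. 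The linear-in-$i,j,k$ contributions reduce to linear combinations of the same right-commutativity expressions, and the constant term vanishes.

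The forward direction is then immediate: right commutativity $(x\circ y)\circ z = (x\circ z)\circ y$ kills the $i^2, j^2, k^2$ and all linear coefficients, while left-symmetry kills the $ij, jk, ik$ coefficients, so $P\equiv 0$ and $J(i,j,k)=0$. For the converse, any element of $A[i,j,k]$ that vanishes on all $(i,j,k)\in\ZZ^3$ must be the zero polynomial coefficient by coefficient; in particular, setting the $i^2$-coefficient to zero gives right commutativity on $(a,b,c)$, and setting the $ij$-coefficient to zero gives left-symmetry on $(a,b,c)$. Letting $a,b,c$ range over $A$ recovers exactly the Novikov axioms.

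The principal obstacle is the clerical task of expanding the twelve terms in $J(i,j,k)$ and regrouping them as a polynomial in $(i,j,k)$ without sign or indexing errors; once the coefficient table is correctly compiled, both directions follow by inspection, with the converse relying only on the elementary observation that a polynomial identity valid on $\ZZ^3$ is a formal polynomial identity.
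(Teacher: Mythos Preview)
Your argument is correct. Note, however, that the paper does not actually supply a proof of this theorem: it is quoted in the introduction as a classical result of Balinsky--Novikov~\cite{BN} and used as motivation. That said, your method is precisely the one the paper employs for the analogous pre-Novikov statement (Theorem~\ref{t5}): expand the relevant identity on homogeneous tensors $at^i,bt^j,ct^k$, regroup as a polynomial in $i,j,k$ with coefficients in $A$, and then read off the algebra axioms from individual monomial coefficients. The only cosmetic difference is that in Theorem~\ref{t5} the paper extracts the axioms by comparing coefficients of specific monomials such as $i(i-1)$, $k(k-1)$, $ij$, $jk$ rather than the pure powers $i^2$, $ij$, etc.; this amounts to the same thing up to linear combinations, and your choice is arguably cleaner.
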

\noindent Note that $t^i\diamond t^j=it^{i+j-1}$ for all $i$, $j\in\ZZ$ defines a right Novikov algebra structure on ${\bf k}[t,t^{-1}]$. Such construction of Lie algebras can be interpretated from the view of operads. Let $\mathcal{P}$ be a binary quadratic operad and $\mathcal{P}^{\textup{!}}$ be its operadic Koszul dual. By~\cite[Corollary~2.29]{GK} (see also~\cite[Proposition~7.6.5]{LV}), the tensor product of a $\mathcal{P}$-algebra with a $\mathcal{P}^{\textup{!}}$-algebra is naturally endowed with a Lie algebra structure, giving a potentially very general procedure of constructing Lie algebras.
Note that, by the result in \cite{Dz}, the Koszul dual of the operad of Novikov algebras is the operad of right Novikov algebras. Therefore, Novikov algebra affinization has
the operadic interpretation. It should be pointed out that the fact that the Lie bracket defined by Eq. (\ref{infLie1}) defining a Lie algebra characterizes a Novikov algebra is not trivial, since it has no good operadic interpretation, as far as we know.
The construction of Lie algebras from Novikov algebras and right Novikov algebras and the affinization of Novikov algebras were lifted to  the context of bialgebras  in \cite{HBG}. It was shown that there is a natural completed Lie bialgebra structure on the tensor product of a Novikov bialgebra and a quadratic $\ZZ$-graded right Novikov algebra. Moreover, when the quadratic $\ZZ$-graded right Novikov algebra is a specific one defined on the vector space ${\bf k}[t,t^{-1}]$, the definition of  Novikov bialgebras can be derived from the definition of  completed Lie bialgebras \cite{HBG}. Note that the operads of perm algebras and pre-Lie algebras are the Koszul dual each other and hence there is also a Lie algebra structure on the tensor product of a perm algebra and a pre-Lie algebra. The affinizations of perm algebras and pre-Lie algebras were introduced in \cite{LZB}. Such construction and affinizations were also extended  to the context of bialgebras in \cite{LZB}.

Note that there is a Novikov algebra associated to a pre-Novikov algebra and the commutator of a pre-Lie algebra is a Lie algebra. Therefore, there should exist a natural construction of pre-Lie algebras from pre-Novikov algebras and right Novikov algebras. Note that  a right Novikov algebra is a special right Novikov dialgebra \cite{SK,XBH}. In fact, we show that there is a natural pre-Lie algebra structure on the tensor product of a pre-Novikov algebra and a right Novikov dialgebra. In particular,  when the right Novikov dialgebra is taken as the one on ${\bf k}[t,t^{-1}]$ given in Theorem \ref{constr-Lie}, we present an affinization of pre-Novikov algebras, i.e. the tensor product of a pre-Novikov algebra and ${\bf k}[t,t^{-1}]$ with a special binary operation being a pre-Lie algebra characterizes the pre-Novikov algebra (see Theorems \ref{di-pre-L} and \ref{t5}). Motivated by the results in \cite{HBG, LZB}, the goal of this paper is to lift the construction of pre-Lie algebras from pre-Novikov algebras and right Novikov dialgebras and the affinization of pre-Novikov algebras to the level of bialgebras.

Note that the theory of pre-Novikov bialgebras was developed in  \cite{LH}. Based on the the results in \cite{HBG, LZB}, we need to construct a pre-Lie bialgebra from a finite-dimensional pre-Novikov bialgebra by a right Novikov dialgebra equipped with a special nondegenerate bilinear form. In the finite-dimensional case,  a pre-Lie bialgebra is equivalent to a para-K$\ddot{\rm{a}}$hler Lie algebra which gives a quadratic pre-Lie algebra \cite{Bai1}, and a pre-Novikov bialgebra is equivalent to a double construction of quasi-Frobenius Novikov algebras which gives a quadratic pre-Novikov algebra \cite{LH}. Therefore, in the finite-dimensional case, we need to consider whether the tensor product of a double construction of quasi-Frobenius Novikov algebras
and a right Novikov dialgebra with a special nondegenerate bilinear form can be endowed with a natural para-K$\ddot{\texttt{a}}$hler Lie algebra structure. Note that a para-K$\ddot{\rm{a}}$hler Lie algebra gives a quadratic pre-Lie algebra and  a double construction of quasi-Frobenius Novikov algebras gives a quadratic pre-Novikov algebra.
We should consider whether there is a natural quadratic pre-Lie algebra structure on the tensor product of a quadratic pre-Novikov algebra and a right Novikov dialgebra with a special nondegenerate bilinear form. We show that if the product of the bilinear form on the pre-Novikov algebra and the bilinear form on the right Novikov dialgebra defines a
quadratic pre-Lie algebra structure on the tensor product, then the right Novikov dialgebra with a special nondegenerate bilinear form should be a quadratic right Novikov algebra.
Note that this situation  is totally different from those in \cite{HBG, LZB}. Based on this observation, we  focus on constructing  a completed pre-Lie bialgebra from a finite-dimensional pre-Novikov bialgebra and a quadratic $\ZZ$-graded right Novikov algebra.


This paper is organized as follows. In Section 2, we recall the definitions of pre-Novikov (co)algebras, right Novikov algebras and right Novikov dialgebras and  introduce the definitions of completed pre-Lie coalgebras and completed right Novikov co-dialgebras. We show that there is a pre-Lie algebra structure on the tensor product of a pre-Novikov algebra and  a $\ZZ$-graded right Novikov dialgebra.
Moreover, if we take the right Novikov dialgebra as a special right Novikov algebra on the vector space of Laurent polynomials,  the tensor product of a pre-Novikov algebra and this Novikov algebra being a pre-Lie algebra characterizes this pre-Novikov algebra, which is called the affinization of pre-Novikov algebras. Dually, we show that there is a completed pre-Lie coalgebra structure on the tensor product of a pre-Novikov coalgebra and a completed right Novikov co-dialgebra, and we also give an affinization of pre-Novikov coalgebras.

In Section 3, we first provide the reason why we choose a quadratic right Novikov algebra instead of a right Novikov dialgebra with a special bilinear form in the finite dimensional case. We show that there exists a completed pre-Lie bialgebra structure on the tensor product of a finite-dimensional pre-Novikov bialgebra and a quadratic $\ZZ$-graded right Novikov algebra. In the special case when the quadratic $\ZZ$-graded right Novikov algebra is a special one on the vector space of Laurent polynomials, we give a characterization of the pre-Novikov bialgebra by the affinization.

In Section 4, we recall the definitions of pre-Novikov Yang-Baxter equation in  a pre-Novikov algebra and  $S$-equation in a  pre-Lie algebra. We construct symmetric completed  solutions of  the $S$-equation in the corresponding pre-Lie algebra from symmetric solutions of the pre-Novikov Yang-Baxter equation in a pre-Novikov algebra. Conversely, through a special quadratic $\ZZ$-graded right Novikov algebra, we can obtain symmetric solutions of  the pre-Novikov Yang-Baxter equation in  the corresponding pre-Novikov algebra  from some symmetric completed  solutions of  the $S$-equation in the pre-Lie algebra.


\smallskip
\noindent
{\bf Notations.}
Throughout this paper, we fix a base field ${\bf k}$ of characteristic $0$. All vector spaces and algebras are over $\bf k$. The identity map is denoted by $\id$ and the set of integers is denoted by $\ZZ$. In this paper, $(A, \vartriangleleft,\vartriangleright)$ is assumed to be a finite-dimensional pre-Novikov algebra. Additionally, for a $\ZZ$-graded vector space $B=\oplus_{i\in \ZZ} B_i$, we assume that each homogeneous component $B_i$ is finite-dimensional.
Let $A$ be a vector space with a binary operation $\circ$.  Define linear maps
$L_{\circ}, R_{\circ}:A\rightarrow {\rm End}_{\bf k}(A)$ by
\begin{eqnarray*}
L_{\circ}(a)b:=a\circ b,\;\; R_{\circ}(a)b:=b\circ a, \;\;\;a, b\in A.
\end{eqnarray*}
Let
$$\tau:A\otimes A\rightarrow A\otimes A,\quad a\otimes b\mapsto b\otimes a,\;\;\;\; a,b\in A,$$
be the flip operator.

\delete{
Throughout this paper, all algebras are finite-dimensional and over a field $\mathbf{k}$ of characteristic zero. We give some notation as follows. Let $V,W$ be vector spaces.\\
\hspace*{4mm}(a) Let $(V,\circ)$ be a vector space with a binary operation $\circ$. We define the left action and right action by $L_\circ :V\rightarrow \text{End}_{\bf k}(V)$ and $R_\circ :V\rightarrow \text{End}_{\bf k}(V)$ respectively,
$$L_\circ (x)y=x\circ y=R_\circ (y)x,\quad x,y\in V.$$
\hspace*{4mm}(b) The exchanging operator $\tau:V\otimes V\rightarrow V\otimes V $ is defined by
\begin{align*}
\tau(x\otimes y)=y\otimes x,\quad  x,y\in V.
\end{align*}}

\section{Pre-Lie (co)algebras from pre-Novikov (co)algebras and right-Novikov (co)dialgebras}
In this section, we show that there is a pre-Lie algebra structure on the tensor product of a pre-Novikov algebra and a right Novikov dialgebra. Moreover, if we take the right Novikov dialgebra as a special right Novikov algebra on the vector space of Laurent polynomials, the tensor product of a pre-Novikov algebra and this right Novikov algebra being a pre-Lie algebra characterizes this pre-Novikov algebra, which is called an affinization of pre-Novikov algebras. Dually, we  show that there is a completed pre-Lie coalgebra structure on the tensor product of a pre-Novikov coalgebra and a completed right Novikov co-dialgebra and also give an affinization of pre-Novikov coalgebras.

\subsection{Pre-Lie algebras from pre-Novikov algebras and right-Novikov dialgebras}
Recall that a {\bf pre-Lie algebra} is a vector space $A$ with a binary operation $\circ :A\otimes A \rightarrow A$ satisfying
\begin{align}
(a\circ b)\circ c-a\circ (b\circ c)&=(b\circ a)\circ c-b\circ (a\circ c),\quad a,b,c\in A.
\end{align}
Denote it by $(A,\circ)$. If the binary operation $\circ$ also satisfies
\begin{align}
(a\circ b)\circ c=(a\circ c)\circ b, \quad a,b,c\in A,
\end{align}
then we call $(A,\circ)$ a {\bf Novikov algebra}.

 Let $(A,\circ )$ be a pre-Lie algebra. Then $A$ is a Lie algebra under the bracket operation $[a,b]=a\circ b-b\circ a$ for all $a$, $b\in A$, which is denoted by $(\mathcal{G}(A),[\cdot,\cdot])$. $(\mathcal{G}(A),[\cdot,\cdot])$ is called the {\bf sub-adjacent Lie algebra } of $(A,\circ )$. We also call $(A,\circ )$ the {\bf compatible pre-Lie algebra structure } on  the Lie algebra $(\mathcal{G}(A),[\cdot,\cdot])$.

\begin{definition}\label{pNov}\cite{HBG}
A {\bf pre-Novikov algebra} is a vector space $A$ with binary operations $\vartriangleleft,\vartriangleright: A\otimes A\rightarrow A$  satisfying
\begin{align}
a\vartriangleright (b\vartriangleright c)&=(a\circ b)\vartriangleright c+b\vartriangleright(a\vartriangleright c)-(b\circ a)\vartriangleright c,\label{pN-1}\\
a\vartriangleright(b\vartriangleleft c)&=(a\vartriangleright b)\vartriangleleft c+b\vartriangleleft(a\circ c)-(b\vartriangleleft a)\vartriangleleft c,\\
(a\circ b)\vartriangleright c&=(a\vartriangleright c)\vartriangleleft b,\label{e20}\\
(a\vartriangleleft b)\vartriangleleft c&=(a\vartriangleleft c)\vartriangleleft b, \label{pN-4} \quad a,b,c\in A,
\end{align}
where $a\circ b:=a\vartriangleleft b+a\vartriangleright b$. Denote it by $(A,\vartriangleleft,\vartriangleright)$.
\end{definition}

\begin{proposition}\cite[Proposition 3.33]{HBG}\label{an}
Let $(A,\vartriangleleft,\vartriangleright)$ be a pre-Novikov algebra. The binary operation
\begin{align}
a\circ b:=a\vartriangleleft b+a\vartriangleright b, \quad  a,b\in A, \label{e13}
\end{align}
defines a Novikov algebra $(A,\circ)$, which is called the \textbf{associated Novikov algebra} of $(A,\vartriangleleft,\vartriangleright)$.
\delete{Furthermore, $(A,L_{\vartriangleright},R_{\vartriangleleft})$ is a representation of $(A,\circ)$. Conversely, let $A$ be a vector space equipped with binary operations $\vartriangleleft$ and $\vartriangleright$. If $(A, \circ)$ defined by Eq. \eqref{e13} is a Novikov algebra and $(A,L_{\vartriangleright},R_{\vartriangleleft})$ is a representation of $(A,\circ)$, then $(A,\vartriangleleft,\vartriangleright)$ is a pre-Novikov algebra.}
\end{proposition}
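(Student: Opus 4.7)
The plan is to verify directly that $(A, \circ)$ satisfies both the left-symmetric identity and the right-commutativity identity that together characterize a Novikov algebra. Expanding $a\circ b = a\vartriangleleft b + a\vartriangleright b$ in every slot reduces each identity to an equality among monomials in $\vartriangleleft$ and $\vartriangleright$, which I would attack using the four axioms \meqref{pN-1}--\meqref{pN-4} of Definition \ref{pNov}.

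First I would dispatch right-commutativity $(a\circ b)\circ c = (a\circ c)\circ b$. Expanding both sides gives four monomial groups. Using \meqref{e20} to rewrite $(a\circ b)\vartriangleright c = (a\vartriangleright c)\vartriangleleft b$ (and symmetrically for the other side), three of the four summands match term by term, and the remaining difference $(a\vartriangleleft b)\vartriangleleft c - (a\vartriangleleft c)\vartriangleleft b$ vanishes by \meqref{pN-4}. This part is essentially bookkeeping.

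Next I would verify the pre-Lie identity $(a\circ b)\circ c - a\circ(b\circ c) = (b\circ a)\circ c - b\circ(a\circ c)$. After expansion I would organize the terms into three groups and reduce them in the following order: (i) pure $\vartriangleleft$-monomials such as $(a\vartriangleleft b)\vartriangleleft c$ cancel in the antisymmetric combination of LHS and RHS (or combine via \meqref{pN-4}); (ii) terms of the form $a\vartriangleright(b\vartriangleleft c)$ and $a\vartriangleleft(b\circ c)$ are rewritten using \meqref{pN-2} so that the resulting $(a\vartriangleright b)\vartriangleleft c$ and $(b\vartriangleleft a)\vartriangleleft c$ summands collapse the mixed sector; (iii) the residual pure $\vartriangleright\vartriangleright$ terms $a\vartriangleright(b\vartriangleright c) - b\vartriangleright(a\vartriangleright c)$ are handled by \meqref{pN-1}, which rewrites them as $(a\circ b)\vartriangleright c - (b\circ a)\vartriangleright c$, and then \meqref{e20} turns this into $(a\vartriangleright c)\vartriangleleft b - (b\vartriangleright c)\vartriangleleft a$, exactly matching the leftover $(\cdot)\vartriangleleft(\cdot)$ monomials produced in step (ii).

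The main obstacle is purely combinatorial: each side of the pre-Lie identity expands into roughly seven $\vartriangleleft$/$\vartriangleright$-monomials, so about fourteen terms must be tracked, and the reduction uses three different axioms whose order matters. No representation-theoretic machinery is needed beyond the defining axioms; it is just a matter of grouping monomials by their outer operation and systematically invoking \meqref{pN-1}, \meqref{pN-2}, \meqref{e20}, and \meqref{pN-4} in that order. The parenthetical statement in the proposition asserting a converse via the representation $(A, L_{\vartriangleright}, R_{\vartriangleleft})$ of $(A, \circ)$ can be read off from the same computation, since each of the four pre-Novikov axioms corresponds to one of the compatibility relations of this representation.
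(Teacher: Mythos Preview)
The paper does not supply its own proof of this proposition; it is quoted verbatim as \cite[Proposition~3.33]{HBG} and used as background. Your direct-verification plan is correct and is exactly the standard argument: expand $\circ$ into $\vartriangleleft+\vartriangleright$, use \meqref{e20} and \meqref{pN-4} for right-commutativity, and combine \meqref{pN-1}, the second axiom, \meqref{e20}, and \meqref{pN-4} for the left-symmetric identity. There is nothing to compare against in this paper, and your outline would reproduce the routine computation in the cited reference.
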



Next, we recall the definitions of  right Novikov dialgebras and right Novikov algebras.
\begin{definition}\cite{KS,XBH}
A {\bf right Novikov dialgebra} $(B,\dashv,\vdash)$ is a vector space $B$ equipped with two binary operations $\dashv$, $\vdash: B\otimes B\longrightarrow B$ satisfying the following compatibility conditions
\begin{align}
&x \vdash (y\vdash z)=y\vdash (	x\vdash z),\;\;	x\vdash (y\dashv z)=y\dashv (x\dashv  z),\label{RN-di1}\\
&(x\vdash y -x\dashv y)\vdash z=0,\;\;x \dashv(y\vdash z-y\dashv z)=0,\label{RN-di2}\\
&x \vdash (y\vdash z-z\dashv y)=(x\vdash y)\vdash z-(x\vdash z)\dashv y,\label{RN-di3}\\
&x \dashv ( y\vdash z -z\dashv y )=(x \dashv y) \dashv z-(x \dashv z) \dashv y,\quad x, y,z \in B.\label{RN-di4}
\end{align}
\end{definition}

\begin{definition}
A {\bf right Novikov algebra} $(B, \diamond)$ is a vector space $B$ equipped with a binary operation $\diamond: B\otimes B\longrightarrow B$ satisfying the following compatibility conditions
\begin{align}
&(x\diamond y)\diamond z-x\diamond(y\diamond z)=(x\diamond z)\diamond y-x\diamond (z\diamond y),\label{e18}\\
&x\diamond (y\diamond z)=y\diamond(x\diamond z),\qquad x,y,z\in B.\label{e19}
\end{align}
\end{definition}

\begin{remark}
If $(B,\dashv,\vdash)$ is a right Novikov dialgebra where $\dashv=\vdash$, then $(B,\diamond:=\dashv=\vdash)$ is a right Novikov algebra. Therefore, right Novikov algebras are special right Novikov dialgebras.
\end{remark}

\begin{definition}\cite{HBG}
A {\bf $\ZZ$-graded right Novikov algebra} is a right Novikov algebra $(B,\diamond )$ with a linear decomposition
    $B=\oplus_{i\in \ZZ} B_i$ such that  $B_i \diamond B_j\subset B_{i+j}$ for all $i, j\in \ZZ$.

A {\bf $\ZZ$-graded pre-Lie algebra}  is a pre-Lie algebra $(L,\circ)$ with a linear decomposition $L=\oplus_{i\in \ZZ} L_i$ such that $L_i\circ L_j\subset L_{i+j}$ for all $i, j\in \ZZ$.
\end{definition}

\begin{example}\label{rN-ex}\cite{HBG}
Let $B={\bf k}[t,t^{-1}]$ be the vector space of Laurent polynomials  with a binary operation $\diamond$ given by
$$t^i\diamond t^j\coloneqq \frac{d}{dt}(t^i)t^j=it^{i+j-1},\quad i, j\in \mathbb{Z}.$$
Then $(B,\diamond)$ is a right Novikov algebra. Moreover, let $B_i={\bf k}t^{i+1}$ for each $i\in \mathbb{Z}$. Then
    $(B=\oplus_{i\in \mathbb{Z}}B_i, \diamond)$ is a $\ZZ$-graded
    right Novikov algebra.
\end{example}

Next, we present a construction of pre-Lie algebras from pre-Novikov algebras and right Novikov dialgebras.
\begin{theorem}\label{di-pre-L}
Let $(A,\vartriangleleft,\vartriangleright)$ be a pre-Novikov algebra and $(B,\dashv,\vdash)$ be a  right Novikov dialgebra. Define a binary operation $\circ$ on $A\otimes B$ by
	\begin{flalign}
		(a\otimes x) \circ (b\otimes y):=a\vartriangleright b\otimes x\vdash y-b\vartriangleleft a \otimes y\dashv x,\quad a,b\in A,x,y\in B.\label{def-di-pL}
	\end{flalign}
	Then $(A\otimes B,\circ)$ is a pre-Lie algebra, which is called \textbf{the induced pre-Lie algebra} from $(A,\vartriangleleft,\vartriangleright)$ and $(B,\dashv,\vdash)$.
\end{theorem}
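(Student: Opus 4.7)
The plan is to verify the pre-Lie identity
$$(u\circ v)\circ w - u\circ(v\circ w) = (v\circ u)\circ w - v\circ(u\circ w)$$
for $u=a\otimes x$, $v=b\otimes y$ and $w=c\otimes z$ by direct expansion. Applying \eqref{def-di-pL} twice in each of the four triple products gives sixteen summands of the form (ternary monomial in $\vartriangleleft,\vartriangleright$ on $a,b,c$) $\otimes$ (ternary monomial in $\dashv,\vdash$ on $x,y,z$). The overall strategy is to regroup these summands according to the shape of their $B$-factor, using the dialgebra axioms \eqref{RN-di1}--\eqref{RN-di4} to identify or rewrite $B$-factors to a small set of normal forms, and then to check that the $A$-coefficient surviving on each normal form vanishes by the pre-Novikov axioms \eqref{pN-1}--\eqref{pN-4}.

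The key reductions are as follows. First, axiom \eqref{RN-di2} identifies $B$-factors that differ only by an inner swap $\vdash\leftrightarrow\dashv$; combining the corresponding summands and using $\vartriangleleft + \vartriangleright = \circ$ replaces mixed $A$-coefficients by expressions of the form $(a\circ b)\vartriangleright c$, $c\vartriangleleft(a\circ b)$, and their $a\leftrightarrow b$ counterparts. Next, the ``left-nested'' summands with $B$-factors $x\vdash(y\vdash z)$ and $y\vdash(x\vdash z)$ share a common $B$-factor by \eqref{RN-di1}, and \eqref{pN-1} matches their $A$-coefficients to $(a\circ b)\vartriangleright c - (b\circ a)\vartriangleright c$. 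Axiom \eqref{RN-di3} is then applied to rewrite $(x\vdash y)\vdash z$ as $x\vdash(y\vdash z) + (x\vdash z)\dashv y - x\vdash(z\dashv y)$; the correction terms carry $A$-coefficients $(a\circ b)\vartriangleright c$ and $(b\circ a)\vartriangleright c$, which become $(a\vartriangleright c)\vartriangleleft b$ and $(b\vartriangleright c)\vartriangleleft a$ via \eqref{e20} and thereby cancel with the remaining summands having $B$-factors $(x\vdash z)\dashv y$ and $(y\vdash z)\dashv x$. The surviving mixed $B$-factors of the form $x\vdash(z\dashv y)$ are absorbed by \eqref{pN-2} together with the identification $x\vdash(z\dashv y) = z\dashv(x\vdash y)$ coming from the second halves of \eqref{RN-di1} and \eqref{RN-di2}.

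After these reductions the residue collapses to a single expression of the form
$$X\otimes\bigl[y\vdash(z\dashv x) - x\vdash(z\dashv y) + (z\dashv x)\dashv y - (z\dashv y)\dashv x\bigr],$$
where $X=(c\vartriangleleft a)\vartriangleleft b = (c\vartriangleleft b)\vartriangleleft a$ by \eqref{pN-4}, and the bracketed expression vanishes thanks to \eqref{RN-di4} combined with the identifications above. The main obstacle will be purely bookkeeping: each of the sixteen summands has to be tracked through the normalization of its $B$-factor and correctly matched to one of the pre-Novikov axioms. The formal symmetry between the pairs $(\vartriangleright,\vdash)$ and $(\vartriangleleft,\dashv)$ in \eqref{def-di-pL} ensures that each of \eqref{pN-1}--\eqref{pN-4} and each piece of \eqref{RN-di1}--\eqref{RN-di4} is invoked in the role that this symmetry predicts.
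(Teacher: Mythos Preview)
Your proposal is correct and follows essentially the same approach as the paper: both expand the pre-Lie associator difference into sixteen tensor summands, use the dialgebra axioms \eqref{RN-di1}--\eqref{RN-di4} to normalize and identify the $B$-factors, and then verify that the resulting $A$-coefficients vanish by the pre-Novikov axioms \eqref{pN-1}--\eqref{pN-4}. The only difference is presentational: the paper writes out the full computation in two regrouping passes, whereas you give a narrative outline of the same bookkeeping (and your reference to ``\eqref{pN-2}'' should point to the unlabeled second axiom in Definition~\ref{pNov}).
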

\begin{proof}
By Eq.~\eqref{def-di-pL}, we have
\begin{flalign*}
	&((a\otimes x) \circ (b\otimes y))\circ (c\otimes z)-(a\otimes x) \circ ((b\otimes y)\circ (c\otimes z))\\
	&\quad\quad-((b\otimes y)\circ (a\otimes x))\circ (c\otimes z)+(b\otimes y)\circ ((a\otimes x)\circ (c\otimes z))\\
	&\quad =(a \vartriangleright b)\vartriangleright c\otimes(x\vdash y)\vdash z-c\vartriangleleft(a\vartriangleright b)\otimes z\dashv(x\vdash y)-(b\vartriangleleft a)\vartriangleright c\otimes (y\dashv x)\vdash z\\
	&\quad\quad +c\vartriangleleft(b\vartriangleleft a)\otimes z\dashv(y\dashv x)
	-a\vartriangleright (b\vartriangleright c)\otimes x\vdash(y\vdash z)+(b\vartriangleright c)\vartriangleleft a\otimes (y\vdash z)\dashv x\\
	&\quad\quad +a\vartriangleright(c\vartriangleleft b)\otimes x\vdash(z\dashv y)
	-(c\vartriangleleft b)\vartriangleleft a\otimes (z\dashv y)\dashv x-(b\vartriangleright a)\vartriangleright c\otimes (y\vdash x)\vdash z\\
	&\quad\quad +c\vartriangleleft(b\vartriangleright a)\otimes z\dashv(y\vdash x)+(a\vartriangleleft b)\vartriangleright c\otimes (x\dashv y)\vdash z-c\vartriangleleft(a\vartriangleleft b)\otimes z\dashv(x\dashv y)\\
	&\quad\quad +b\vartriangleright(a\vartriangleright c)\otimes y\vdash(x\vdash z)
	-(a\vartriangleright c)\vartriangleleft b\otimes (x\vdash z)\dashv y-b\vartriangleright(c\vartriangleleft a)\otimes y\vdash(z\dashv x)\\
	&\quad\quad +(c\vartriangleleft a)\vartriangleleft b\otimes (z\dashv x)\dashv y.\\
\end{flalign*}

By Eqs.~\eqref{RN-di1}-\eqref{RN-di2}, we obtain
\begin{flalign*}	
	&x\vdash (y\vdash z)=y\vdash (x\vdash z),\quad z\dashv (y\vdash x)=y\vdash (z\dashv x)=z\dashv (y\dashv x), \\
	&x\vdash (z\dashv y)=z\dashv (x\dashv y)=z\dashv (x\vdash y),  \\
	&(y\vdash x )\vdash z=(y\dashv x)\vdash z,\quad
	(x\dashv y)\vdash z=(x\vdash y)\vdash z.
\end{flalign*}
Then applying the above equalities, we get
\begin{flalign*}	
	&((a\otimes x) \circ (b\otimes y))\circ (c\otimes z)-(a\otimes x) \circ ((b\otimes y)\circ (c\otimes z))\\
&\quad\quad-((b\otimes y)\circ (a\otimes x))\circ (c\otimes z)+(b\otimes y)\circ ((a\otimes x)\circ (c\otimes z))\\
	&\quad = (b\vartriangleright (a\vartriangleright c)-a\vartriangleright(b\vartriangleright c))\otimes x\vdash(y\vdash z)\\
	&\quad\quad +(c\vartriangleleft(b\vartriangleleft a)+c\vartriangleleft(b\vartriangleright a)-b\vartriangleright(c\vartriangleleft a))\otimes z\dashv(y\vdash x)\\
	&\quad\quad+(-c\vartriangleleft(a\vartriangleright b)+a\vartriangleright(c\vartriangleleft b)-c\vartriangleleft(a\vartriangleleft b))\otimes z\dashv(x\vdash y)\\
	&\quad\quad -((b \vartriangleleft a)\vartriangleright c+(b\vartriangleright a)\vartriangleright c)\otimes (y\vdash x )\vdash z\\
    &\quad\quad +((a\vartriangleright b)\vartriangleright c+(a\vartriangleleft b)\vartriangleright c)\otimes (x\vdash y)\vdash z\\
    &\quad\quad +(c\vartriangleleft b)\vartriangleleft a\otimes((z\dashv x)\dashv y-(z\dashv y)\dashv x)\\
	&\quad\quad +(b \vartriangleright c)\vartriangleleft a\otimes (y\vdash z)\dashv x\\
	&\quad\quad-(a\vartriangleright c)\vartriangleleft b\otimes (x\vdash z)\dashv y.
\end{flalign*}

By Eqs.~\eqref{RN-di3}-\eqref{RN-di4}, we obtain
\begin{flalign*}
(z \dashv x)\dashv y-(z\dashv y)\dashv x=z\dashv(x\vdash y)-z\dashv(y\dashv x),\\
(y\vdash z)\dashv x=(y\vdash x)\vdash z -y\vdash(x \vdash z)+y\vdash (z\dashv x),\\
(x\vdash z)\dashv y=(x\vdash y)\vdash z-x\vdash(y\vdash z)+x\vdash(z\dashv y).
\end{flalign*}
Then applying the above equalities, we get
\begin{flalign*}	
		&((a\otimes x) \circ (b\otimes y))\circ (c\otimes z)-(a\otimes x) \circ ((b\otimes y)\circ (c\otimes z))\\
	&\quad\quad-((b\otimes y)\circ (a\otimes x))\circ (c\otimes z)+(b\otimes y)\circ ((a\otimes x)\circ (c\otimes z))\\
		&\quad = (b\vartriangleright (a\vartriangleright c)-a\vartriangleright(b\vartriangleright c))\otimes x\vdash(y\vdash z)\\
		&\quad\quad +(c\vartriangleleft(b\vartriangleleft a)+c\vartriangleleft(b\vartriangleright a)-b\vartriangleright(c\vartriangleleft a))\otimes z\dashv(y\vdash x)\\
		&\quad\quad(-c\vartriangleleft(a\vartriangleright b)+a\vartriangleright(c\vartriangleleft b)-c\vartriangleleft(a\vartriangleleft b))\otimes z\dashv(x\vdash y)\\
		&\quad\quad -((b \vartriangleleft a)\vartriangleright c+(b\vartriangleright a)\vartriangleright c)\otimes (y\vdash x )\vdash z\\
		&\quad\quad +((a\vartriangleright b)\vartriangleright c+(a\vartriangleleft b)\vartriangleright c)\otimes (x\vdash y)\vdash z\\
		&\quad\quad +(c\vartriangleleft b)\vartriangleleft a\otimes(z\dashv(x\vdash y)-z\dashv(y\dashv x))\\
		&\quad\quad +(b \vartriangleright c)\vartriangleleft a\otimes ((y\vdash x)\vdash z -y\vdash(x \vdash z)+y\vdash (z\dashv x))\\
		&\quad\quad-(a\vartriangleright c)\vartriangleleft b\otimes ((x\vdash y)\vdash z-x\vdash(y\vdash z)+x\vdash(z\dashv y))\\
		&\quad = (b\vartriangleright (a\vartriangleright c)-a\vartriangleright(b\vartriangleright c)+(a\vartriangleright c )\vartriangleleft b  -(b\vartriangleright c)\vartriangleleft a)\otimes x\vdash(y\vdash z)\\
		&\quad\quad +(c\vartriangleleft(b\vartriangleleft a+b\vartriangleright a)-b\vartriangleright(c\vartriangleleft a)-(c\vartriangleleft b)\vartriangleleft a+(b\vartriangleright c)\vartriangleleft a)\otimes z\dashv(y\vdash x)\\
		&\quad\quad(-c\vartriangleleft(a\vartriangleright b+a\vartriangleleft b)+a\vartriangleright(c\vartriangleleft b)+(c\vartriangleleft b)\vartriangleleft a-(a\vartriangleright c)\vartriangleleft b)\otimes z\dashv(x\vdash y)\\
		&\quad\quad -((b \vartriangleleft a+b\vartriangleright a)\vartriangleright c-(b\vartriangleright c)\vartriangleleft a)\otimes (y\vdash x )\vdash z\\
		&\quad\quad +((a\vartriangleright b+a\vartriangleleft b)\vartriangleright c-(a\vartriangleright c)\vartriangleleft b)\otimes (x\vdash y)\vdash z\\
		&=0.
\end{flalign*}

Therefore, $(A\otimes B,\circ)$ is a pre-Lie algebra.
\end{proof}
\begin{remark}
 This theorem was observed by Chengming Bai and Guilai Liu. We
 thank them for communicating it to us.
\end{remark}

\begin{theorem}\label{t5}
Let $A$ be a vector space and $\vartriangleleft$, $\vartriangleright:A \otimes A\rightarrow A$ be two binary operations. Assume that $(B,\diamond)=({\bf k}[t,t^{-1}],\diamond)$ is the $\mathbb{Z}$-graded right Novikov algebra given in Example \ref{rN-ex}.
Then $(A\otimes B,\circ)$ is a $\mathbb{Z}$-graded pre-Lie algebra defined by Eq. (\ref{def-di-pL}) if and only if $(A,\vartriangleleft,\vartriangleright)$ is a pre-Novikov algebra.
\end{theorem}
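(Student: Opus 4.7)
The claim is an equivalence. The "if" direction follows immediately from Theorem~\ref{di-pre-L}, once we observe that a right Novikov algebra is a right Novikov dialgebra with $\dashv=\vdash=\diamond$. The $\ZZ$-grading on $A\otimes B$ inherited from $B=\bfk[t,t^{-1}]$ is automatically respected, since $t^p\diamond t^q\in\bfk t^{p+q-1}$ forces the induced operation to send $(A\otimes B_i)\otimes(A\otimes B_j)$ into $A\otimes B_{i+j}$.

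For the converse, my plan is to apply the pre-Lie identity to pure tensors $a\otimes t^i,\,b\otimes t^j,\,c\otimes t^k$ and exploit the fact that the obstruction is polynomial in the three exponents. Using $t^p\diamond t^q=p\,t^{p+q-1}$ together with the definition \eqref{def-di-pL}, both associators
\[
((a\otimes t^i)\circ(b\otimes t^j))\circ(c\otimes t^k)-(a\otimes t^i)\circ((b\otimes t^j)\circ(c\otimes t^k))
\]
and the analogous expression with $a\otimes t^i$ and $b\otimes t^j$ swapped collapse into a single element of $A\otimes\bfk t^{i+j+k-2}$. Their difference therefore has the form $P(i,j,k)\otimes t^{i+j+k-2}$, where $P(i,j,k)\in A$ is a polynomial in $i,j,k$ of degree at most $2$ in each variable. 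Since the difference must vanish for every triple $(i,j,k)\in\ZZ^3$, the polynomial $P$ vanishes identically, so every one of its monomial coefficients is the zero element of $A$.

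I then read off the pre-Novikov axioms from individual monomial coefficients: the coefficient of $i^2$ yields Eq.~\eqref{e20}, $(a\circ b)\vartriangleright c=(a\vartriangleright c)\vartriangleleft b$; the coefficient of $k^2$ yields Eq.~\eqref{pN-4}, $(c\vartriangleleft a)\vartriangleleft b=(c\vartriangleleft b)\vartriangleleft a$; the coefficient of $ij$ produces Eq.~\eqref{pN-1}; and the coefficient of $ik$ (or symmetrically $jk$) produces the second, unlabelled, axiom of Definition~\ref{pNov}. The coefficients of $j^2$ and of the purely linear monomials $i$, $j$, $k$ coincide with, or are linear combinations of, these four relations, so they yield no new constraints and in fact serve as a consistency check.

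The main obstacle I expect is bookkeeping rather than conceptual. Each associator expands into eight signed terms, each carrying a coefficient such as $(i+j-1)i$, so one must track on the order of twenty quadratic monomials per side while keeping the signs correct. The conceptual heart of the argument — that a polynomial in $i,j,k$ vanishing on all of $\ZZ^3$ must vanish as a polynomial, hence coefficient-wise in $A$ — is clean but indispensable; this is precisely what allows the equivalence to go through even though, as noted in the discussion preceding Theorem~\ref{constr-Lie}, no operadic explanation for such a characterization is known.
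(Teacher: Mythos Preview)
Your proposal is correct and follows essentially the same route as the paper: both directions invoke Theorem~\ref{di-pre-L} for the ``if'' part, and for the converse both expand the pre-Lie associator obstruction on $a\otimes t^i,\,b\otimes t^j,\,c\otimes t^k$ and extract the four pre-Novikov axioms from the coefficients of quadratic monomials in $i,j,k$ (the paper groups them as $i(i-1),\,k(k-1),\,ij,\,jk$ rather than pure monomials, but this is the same computation). Your framing via ``a polynomial vanishing on $\ZZ^3$ vanishes identically'' is a cleaner way to justify the coefficient comparison than the paper's bare assertion, and your observation that the remaining coefficients ($j^2$ and the linear terms) are redundant is correct and serves as the consistency check you describe.
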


\begin{proof}
If $(A,\vartriangleleft,\vartriangleright)$ is a pre-Novikov algebra, then $(A\otimes B,\circ)$ is a pre-Lie algebra by  Theorem \ref{di-pre-L}. Since $(B=\oplus_{i\in \ZZ}B_i, \diamond)$ is a $\ZZ$-graded right Novikov algebra,  it follows from $(A\otimes B_i)\circ (A\otimes B_j)\subset A\otimes B_{i+j}$ that $(A\otimes B, \circ)=(\oplus_{i\in \ZZ}(A\ot B_i),\circ)$ is a $\ZZ$-graded pre-Lie algebra. Next, we prove the ``only if" part.

Note that by Eq. (\ref{def-di-pL}), we have
\begin{flalign*}
(a\otimes t^i)\circ (b\otimes t^j)=a\vartriangleright b\otimes i t^{i+j-1}-b\vartriangleleft a\otimes jt^{i+j-1},\quad a, b\in A,i,j\in \mathbb{Z}.
\end{flalign*}
 If $(A\otimes B,\circ)$ is a pre-Lie algebra, for all $a, b,c\in A$, $i,j,k\in \mathbb{Z}$, we obtain
\begin{flalign*}
&((a\otimes t^i) \circ (b\otimes t^j))\circ (c\otimes t^k)-(a\otimes t^i )\circ ((b\otimes t^j)\circ (c\otimes t^k))-((b\otimes t^j) \circ (a\otimes t^i)  )\circ( c\otimes t^k)\\
&\quad\quad +(b\otimes t^j) \circ ((a\otimes t^i)  \circ (c\otimes t^k))\\
&\quad =(a\vartriangleright b)\vartriangleright c\otimes i(i+j-1)t^s-c\vartriangleleft (a\vartriangleright b)\otimes ik t^s-(b\vartriangleleft a)\vartriangleright c\otimes j(i+j-1)t^s\\
&\quad\quad +c\vartriangleleft (b\vartriangleleft a)\otimes jk t^s-a\vartriangleright(b\vartriangleright c)\otimes ij t^s +(b\vartriangleright c )\vartriangleleft a\otimes j(j+k-1) t^s\\
&\quad\quad +a\vartriangleright(c\vartriangleleft b)\otimes ik t^s-(c\vartriangleleft b)\vartriangleleft a\otimes k(j+k-1) t^s-(b\vartriangleright a )\vartriangleright c\otimes j(i+j-1) t^s\\
&\quad\quad +c\vartriangleleft(b\vartriangleright a)\otimes jk t^s+(a\vartriangleleft b)\vartriangleright c\otimes i(i+j-1) t^s-c\vartriangleleft(a\vartriangleleft b)\otimes ik t^s\\
&\quad\quad +b\vartriangleright(a\vartriangleright c)\otimes ij t^s-(a\vartriangleright c)\vartriangleleft b\otimes i(i+k-1) t^s-b\vartriangleright(c\vartriangleleft a)\otimes jk t^s\\
&\quad\quad +(c\vartriangleleft a)\vartriangleleft b\otimes k(i+k-1) t^s\\
&\quad=0,
\end{flalign*}
where $s=i+j+k-2$.\\

Comparing the coefficients of $i(i-1)t^s$, $k(k-1)t^s$, $ijt^s$ and $jkt^s$ in the above equality, we obtain
\begin{flalign*}
&(a\vartriangleright b)\vartriangleright c+(a\vartriangleleft b)\vartriangleright c-(a\vartriangleright c)\vartriangleleft b=0,\\
&(c\vartriangleleft a)\vartriangleleft b-(c\vartriangleleft b)\vartriangleleft a=0,\\
&(a\vartriangleright b)\vartriangleright c-(b\vartriangleleft a)\vartriangleright c-a\vartriangleright (b\vartriangleright c)-(b\vartriangleright a)\vartriangleright c+(a\vartriangleleft b)\vartriangleright c+b\vartriangleright(a\vartriangleright c)=0,\\
&c\vartriangleleft (b\vartriangleleft a)+(b\vartriangleright c)\vartriangleleft a-(c\vartriangleleft b)\vartriangleleft a+c\vartriangleleft (b\vartriangleright a)-b\vartriangleright(c\vartriangleleft a)=0.
\end{flalign*}
Therefore $(A,\vartriangleleft,\vartriangleright)$ is a pre-Novikov algebra.

The proof is completed.
\end{proof}
\subsection{Completed pre-Lie coalgebras from pre-Novikov coalgebras and completed right-Novikov co-dialgebras}
We first recall the definition of pre-Novikov coalgebras.
\begin{definition}\cite{LH}\label{co1}
A \textbf{pre-Novikov coalgebra} is a vector space $A$ with linear maps $\alpha, \beta:A\rightarrow A\otimes A$ satisfying
\begin{eqnarray}
&&\label{cob1}(\alpha\otimes \id)\alpha(a)+(\tau\otimes \id)(\id\otimes \alpha)\beta(a)-(\id\otimes(\alpha+\beta))\alpha(a)-(\tau\otimes \id)(\beta\otimes \id)\alpha(a)=0,\\
&&\label{cob2}(\id\otimes\beta)\beta(a)+(\tau\otimes \id)((\alpha+\beta)\otimes \id)\beta(a)-((\alpha+\beta)\otimes \id)\beta(a)-(\tau\otimes \id)(\id\otimes \beta)\beta(a)=0,\\
&&\label{cob3}(\id \otimes \tau)(\beta\otimes \id)\alpha(a)-((\alpha+\beta)\otimes \id)\beta(a)=0,\\
&&\label{cob4}(\id\otimes\tau)(\alpha\otimes \id)\alpha(a)-(\alpha\otimes \id)\alpha(a)=0,\quad  a\in A.
\end{eqnarray}
Denote it by $(A,\alpha,\beta)$.
\end{definition}

Let $V$ and $W$ be vector spaces. Suppose that $\phi:V\rightarrow W$ is a linear map. Then there is an induced dual linear map $\phi^*:W^*\rightarrow V^*$ defined by
\begin{align}
\langle \phi^*(f),v\rangle=\langle f, \phi(v)\rangle,\quad  v\in V,f\in W^*.\label{a1}
\end{align}
It is easy to see that
$(A,\alpha,\beta)$ is a pre-Novikov coalgebra if and only if $(A^*,\alpha^*,\beta^*)$ is a pre-Novikov algebra, if $A$ is finite-dimensional.

By \cite[Lemma 2.10]{HBG}, we need to recall the notion of completed tensor products. \delete{Hence we can obtain a coalgebra structure on the Laurent polynomials whose graded linear map dual is the right Novikov algebra of Laurent polynomials in Example \ref{rN-ex}.}

Let $C=\oplus_{i\in \ZZ} C_i$ and $D=\oplus_{j\in \ZZ} D_j$ be $\ZZ$-graded vector spaces. We call the {\bf completed tensor product} of $C$ and $D$ to be the vector space
$$C\hatot  D\coloneqq \prod_{i,j\in\ZZ} C_i\ot D_j.
$$
If $C$ and $D$ are finite-dimensional, then $C\hatot  D$ is just the usual tensor product $C\otimes D$.

In general, an element in $C\hatot  D$ is an infinite  sum $\sum_{i,j\in\ZZ} \ts_{ij} $ with $\ts_{ij}\in C_i\ot D_j$. So $\ts_{ij}=\sum_\alpha c_{i\alpha}\ot d_{j\alpha}$ for pure tensors $c_{i\alpha}\ot d_{j\alpha}\in C_i\ot D_j$ with $\alpha$ in a finite index set.
Thus a general term of $C\hatot  D$ is a possibly infinite sum
\vspace{-.2cm}
\begin{equation}\label{eq:ssum}
 \sum_{i,j,\alpha} c_{i\alpha}\ot d_{j\alpha},
 \end{equation}
where $i,j\in \ZZ$ and $\alpha$ is in a finite index set (which might depend on $i,j$).

With these notations, for linear maps $f:C\to C'$ and $g:D\to D'$, define
$$ f\hatot g: C\hatot D \to C' \hatot D',
\quad \sum_{i,j,\alpha} c_{i\alpha}\ot d_{j\alpha}\mapsto  \sum_{i,j,\alpha} f(c_{i\alpha})\ot g(d_{j\alpha}).
\vspace{-.2cm}
$$
Also the twist map $\tau$ has its completion

$$ \widehat{\tau}: C\hatot C \to C\hatot C, \quad
\sum_{i,j,\alpha} c_{i\alpha} \ot d_{j\alpha} \mapsto \sum_{i,j,\alpha} d_{j\alpha}\ot c_{i\alpha}.
$$
Finally, we define a completed coproduct to be a linear map
$$ \Delta: C\to C\hatot C, \quad \Delta(a)
:=\sum_{i,j,\alpha} a_{1i\alpha}\ot a_{2j\alpha}.
$$
Then we have the well-defined map

$$ (\Delta \hatot \id)\Delta(a)=(\Delta \hatot \id)\Big(\sum_{i,j,\alpha} a_{1i\alpha}\ot a_{2j\alpha}\Big) :=\sum_{i,j,\alpha} \Delta(a_{1i\alpha})\ot a_{2j\alpha}
\in C\hatot C \hatot C.
$$

\begin{definition}
 A {\bf \complete right Novikov co-dialgebra} is a triple $(B,\da,\db)$, where $B$ is a vector space and
$\da,\db: B\rightarrow B\hatot B$ are linear maps satisfying for all $x\in B$
\begin{eqnarray}
	&&\label{RNov-Codialg-1} ( \id \hatot\db )\db(x)=(\tau \hatot \id)(\id \hatot\db )\db(x),(\id \hatot \da)\db(x)=(\tau \hatot \id)(\id \hatot\da )\da(x),\\
	&&\label{RNov-Codialg-2}(\db \hatot \id)\db(x)=(\da \hatot\id )\db(x),( \id\hatot\db )\da(x)=(\id \hatot\da )\da(x),\\
	&&\label{RNov-Codialg-3}(\id \hatot\db )\db(x)-(\id \hatot\tau )(\id \hatot\da(b) )\db(x)=(\db \hatot \id)\db(x)-(\id \hatot\tau )(\db \hatot \id)\da(x),\\
	&&\label{RNov-Codialg-4}(\id \hatot\db )\da(x)-(\id \hatot\tau )(\id \hatot\da(b) )\da(x)=(\da \hatot \id)\da(x)-(\id \hatot\tau )(\da \hatot \id)\da(x).
\end{eqnarray}
\end{definition}
\begin{remark}
	If $B$ is finite-dimensional, then $(B,\da,\db)$ is called a {\bf  right Novikov co-dialgebra}.
\end{remark}

\begin{definition}
\begin{enumerate}
\item \cite{HBG}  A {\bf \complete right Novikov coalgebra} is a pair $(B,\Delta)$, where $B=\oplus_{i\in \ZZ}B_i$ is a $\mathbb{Z}$-graded vector space and
    $\Delta: B\rightarrow B\hatot  B$ is a linear map satisfying
    \begin{eqnarray}
        &&\label{top-Nov-Coalg-1}(\Delta \hatot  \id)\Delta(x)-(\id \hatot  \widehat{\tau})(\Delta \hatot  \id)\Delta(x)=(\id \hatot  \Delta)\Delta(x)-(\id \hatot  \widehat{\tau})(\id \hatot  \Delta )\Delta(x),\\
        &&\label{top-Nov-Coalg-2}(\id \hatot  \Delta)\Delta(x)=(\widehat{\tau}\hatot  \id) (\id \hatot  \Delta)\Delta(x),\;\;\; x\in B.
    \end{eqnarray}
\item
A {\bf \complete pre-Lie coalgebra}
 is a pair $(L, \delta)$, where
$L=\oplus_{i\in \ZZ} L_i$
  is a $\ZZ$-graded vector space and $\delta: L\rightarrow L\hatot L$ is a linear map satisfying
    \begin{align}
    (\id\hatot \delta)\delta(a)-(\tau\hatot \id)(\id \hatot \delta)\delta(a)=(\delta\hatot \id)\delta(a)-(\tau\hatot \id)(\delta\hatot \id)\delta(a),  \;\; a\in L\label{co-pL}.
    \end{align}
\end{enumerate}
\end{definition}

\begin{remark}
When $B$ (resp. $L$)  is finite-dimensional, $(B, \Delta)$ (resp. $(L, \delta)$) is just a right Novikov coalgebra (resp. a pre-Lie coalgebra).
 If $(B,\da,\db)$ is a completed right Novikov co-dialgebra in which $\da=\db$, then $(B,\Delta:=\da=\db)$ is a completed right Novikov coalgebra.
\end{remark}

\begin{example}\mlabel{Laurent-coproduct}\cite{HBG}
Let $B\coloneqq {\bf k}[t,t^{-1}]$  with a linear map $\Delta: B\rightarrow B \hatot  B$ given by
\begin{eqnarray}\mlabel{eq:laurentco}
     \notag
\Delta(t^j)=\sum_{i\in \mathbb{Z}}(i+1)t^{-i-2}\otimes  t^{j+i} , j\in \mathbb{Z}.
\end{eqnarray}
Then $(B={\bf k}[t,t^{-1}], \Delta)$ is a \complete right Novikov coalgebra.
\end{example}

Let $C$ be a vector space and $D=\oplus_{j\in \ZZ} D_j$ be a $\ZZ$-graded vector space. For all $\sum_{i_1,\cdots,i_k}c_{i_1}\otimes \cdots \otimes c_{i_k}\in C\otimes \cdots \otimes C$ and $\sum_{j_1,\cdots,j_k,\alpha}d_{j_1\alpha}\otimes \cdots\otimes d_{j_k\alpha}\in D\hatot \cdots \hatot D$, we set
{\small \begin{eqnarray*}
\sum_{i_1,\cdots,i_k}c_{i_1}\otimes \cdots \otimes c_{i_k} \bullet \sum_{j_1,\cdots,j_k,\alpha}d_{j_1\alpha}\otimes \cdots\otimes d_{j_k\alpha}:=  \sum_{j_1,\cdots,j_k,\alpha}\sum_{i_1,\cdots,i_k}(c_{i_1}\otimes d_{j_1\alpha})\otimes \cdots \otimes(c_{i_k}\otimes d_{j_k\alpha}),
\end{eqnarray*}}
which belongs to $(C\otimes D)\hatot (C\otimes D)\hatot\cdots \hatot (C\otimes D)$.

Now we give the coalgebra version of Theorem \ref{di-pre-L}.
\begin{theorem}\label{tt6}
	Let $(A,\alpha,\beta)$ be a pre-Novikov coalgebra, $(B,\Delta_{\alpha},\Delta_{\beta})$ be a completed right Novikov co-dialgebra. Then $(L:=A\otimes B, \delta)$ is a \complete pre-Lie coalgebra, where the linear map $\delta:L\rightarrow L \hatot  L$ is given by
	\begin{eqnarray}\label{co-dipL}
		\delta(a\otimes x)=\beta(a)\bullet \db(x)-\widehat{\tau}\alpha(a)\bullet\widehat{\tau}\da (x),  \quad a\in A, x\in B.
	\end{eqnarray}
	\delete{To be more specific, we write $\alpha(a)=\sum_{[a]}
	a_{[1]}\otimes a_{[2]}$ and $\beta(a)=\sum_{(a)}a_{(1)}\ot a_{(2)}$ in the Sweedler notation.
	And $\da(x)
	=\sum _{i,j,\gamma} x_{[1]i\gamma}\ot x_{[2]j\gamma}$, $\db(x)
	=\sum _{i,j,\gamma} x_{(1)i\gamma}\ot x_{(2)j\gamma}$ as expression in Eq. \eqref{eq:ssum}, we set
	\begin{flalign}
		\alpha(a)\bullet \da(x)\coloneqq \sum_{[a]}\sum_{i,j,\gamma} (a_{[1]}\otimes x_{[1]i\gamma})\otimes (a_{[2]}\otimes x_{[2]j\gamma}),\label{1111}\\
		\beta(a)\bullet \db(x)\coloneqq \sum_{(a)}\sum_{i,j,\gamma} (a_{(1)}\otimes x_{(1)i\gamma})\otimes (a_{(2)}\otimes x_{(2)j\gamma}).\label{2222}
	\end{flalign}}
\end{theorem}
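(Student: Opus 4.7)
The plan is to derive the theorem from Theorem~\ref{di-pre-L} by graded duality. Since $A$ is finite-dimensional and each $B_i$ is finite-dimensional, the graded dual $A^*$ inherits a pre-Novikov algebra structure $(A^{*},\vartriangleleft,\vartriangleright) := (A^{*},\alpha^{*},\beta^{*})$, as remarked after Definition~\ref{co1}, and the graded dual $B^{*} = \oplus_{i\in\ZZ} B_i^{*}$ inherits a right Novikov dialgebra structure via $f\vdash g := \Delta_{\beta}^{*}(f\otimes g)$ and $f\dashv g := \Delta_{\alpha}^{*}(f\otimes g)$; a direct computation (dualizing each term against elements of $B$) shows that axioms~(\ref{RNov-Codialg-1})--(\ref{RNov-Codialg-4}) correspond exactly to~(\ref{RN-di1})--(\ref{RN-di4}).

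By Theorem~\ref{di-pre-L}, the tensor product $A^{*}\otimes B^{*}$ becomes a pre-Lie algebra under the operation~(\ref{def-di-pL}). Because all graded components are finite-dimensional, the graded dual of $A^{*}\otimes B^{*}$ is $A\otimes B$, and the graded dual of $(A^{*}\otimes B^{*})\otimes (A^{*}\otimes B^{*})$ is precisely the completed tensor product $(A\otimes B)\hatot (A\otimes B)$. Dualizing $\circ$ then yields a well-defined linear map $\delta: A\otimes B\to (A\otimes B)\hatot (A\otimes B)$, which automatically satisfies~(\ref{co-pL}), since the pre-Lie identity dualizes term-by-term. A direct unwinding identifies $\delta$ with the formula~(\ref{co-dipL}): the summand $a\vartriangleright b\otimes x\vdash y$ in~(\ref{def-di-pL}) dualizes to $\beta(a)\bullet\Delta_{\beta}(x)$, and the summand $b\vartriangleleft a\otimes y\dashv x$, with arguments swapped in both tensor factors, dualizes to $\widehat{\tau}\alpha(a)\bullet\widehat{\tau}\Delta_{\alpha}(x)$, the minus sign being preserved.

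The main obstacle will be the care required for the completed tensor product under duality, especially since $\Delta_{\alpha},\Delta_{\beta}$ need not respect the grading of $B$, so that the induced dialgebra operations on $B^{*}$ need not be graded either; one must verify that the formal infinite sums appearing in $\delta(a\otimes x)$ converge correctly and that dualization commutes with the completion. Should this bookkeeping prove too delicate, a safer fallback is a direct Sweedler-notation computation: expand $(\id\hatot\delta)\delta(a\otimes x)-(\tau\hatot\id)(\id\hatot\delta)\delta(a\otimes x)$ and the analogous expression with $\delta\hatot\id$ via~(\ref{co-dipL}), group the resulting terms by the type of iterated coproduct on the $A$-side, and apply~(\ref{cob1})--(\ref{cob4}) together with~(\ref{RNov-Codialg-1})--(\ref{RNov-Codialg-4}), mirroring exactly the three-stage reduction carried out in the proof of Theorem~\ref{di-pre-L}.
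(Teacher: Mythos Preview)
Your primary duality argument has a genuine obstruction, and it is precisely the one you flag. The definition of a completed right Novikov co-dialgebra in the paper does not require $\Delta_{\alpha},\Delta_{\beta}$ to be graded maps, so for $f\in B_i^{*}$, $g\in B_j^{*}$ the dual product $\langle f\vdash g,x\rangle=\langle f\otimes g,\Delta_{\beta}(x)\rangle$ gives only an element of the full dual $\prod_k B_k^{*}$, with no reason to land in the graded dual $\oplus_k B_k^{*}$. Thus $(B^{gr*},\dashv,\vdash)$ need not be an honest right Novikov dialgebra in the sense required to invoke Theorem~\ref{di-pre-L}, and the subsequent identification of $L$ with the graded dual of $A^{*}\otimes B^{gr*}$ breaks down. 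Without an extra hypothesis such as $\Delta_{\alpha}(B_k),\Delta_{\beta}(B_k)\subset\prod_{i+j=k}B_i\otimes B_j$ (which the paper never imposes), this route cannot be completed.

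The paper therefore proceeds exactly by your fallback: it writes out $((\id\hatot\delta)\delta-(\widehat\tau\hatot\id)(\id\hatot\delta)\delta-(\delta\hatot\id)\delta+(\widehat\tau\hatot\id)(\delta\hatot\id)\delta)(a\otimes x)$ explicitly using~(\ref{co-dipL}), applies the co-dialgebra identities~(\ref{RNov-Codialg-1})--(\ref{RNov-Codialg-4}) on the $B$-side to consolidate the sixteen terms, and then groups the result into six blocks each multiplying a fixed $B$-side tensor, which vanish by the pre-Novikov coalgebra axioms~(\ref{cob1})--(\ref{cob4}). This is the coalgebra mirror of the three-stage reduction in Theorem~\ref{di-pre-L}, as you anticipated. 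So your fallback plan is correct and coincides with the paper's proof; the duality shortcut should be dropped in the stated generality.
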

\begin{proof}
\delete{For
$\sum_{\ell}a'_\ell\otimes a''_\ell\otimes a'''_\ell\in A\otimes A\otimes A$ and
$\sum_{i,j,k,\alpha}x'_{i\alpha}\otimes x''_{j\alpha}\otimes x'''_{k\alpha}\in B\hatot  B\hatot  B$, we denote
\begin{eqnarray*}
	\Big(\sum_{\ell}a'_\ell\otimes a''_\ell\otimes a'''_\ell\Big)\bullet \Big(\sum_{i,j,k,\alpha }x'_{i\alpha}\otimes x''_{j\alpha}\otimes x'''_{k\alpha}\Big):=\sum_\ell \sum_{i,j,k,\alpha}(a'_\ell\otimes x'_{i\alpha})\otimes (a''_\ell\otimes x''_{j\alpha})\otimes (a'''_\ell\otimes  x'''_{k\alpha}).
\end{eqnarray*}}
Applying the definition of $\delta$, for all $a\in A$, $x\in B$, we obtain
{\small
	\begin{flalign*}
		&((\id\hatot\delta)\delta-(\tau\hatot \id)(\id \hatot \delta)\delta-(\delta\hatot \id)\delta+(\tau\hatot\id)(\delta\hatot \id)\delta)(a\ot x)\\
		&\quad=(\id \ot\beta ) \beta(a)\bullet (\id \hatot \db)\db(x)-(\id\ot \tau)(\id\ot \alpha)\beta(a) \bullet (\id\hatot\widehat{\tau})(\id\hatot\da)\db(x)\\
		&\quad\quad-(\id \ot\beta )\tau\alpha(a) \bullet(\id \hatot\db)\widehat{\tau}\da(x)+(\id\ot\tau)(\id\ot\alpha)\tau\alpha(a) \bullet(\id\hatot\widehat{\tau})(\id\hatot\da)\widehat{\tau}\da(x)\\
		&\quad\quad-(\tau\ot \id)(\id\ot \beta)\beta(a)\bullet(\widehat{\tau}\hatot \id)(\id\hatot\db)\db(x)\\
		&\quad\quad+(\tau\ot\id)(\id \ot\tau)(\id\ot\alpha)\beta(a)\bullet(\widehat{\tau}\hatot\id)(\id\hatot\widehat{\tau})(\id\hatot\da)\db(x)\\
		&\quad\quad+(\tau\ot\id)(\id\ot\beta)\tau\alpha(a)\bullet(\widehat{\tau}\hatot\id)(\id\hatot\db)\widehat{\tau}\da(x)\\
		&\quad\quad-(\tau\ot\id)(\id\ot\tau)(\id\ot\alpha)\tau\alpha(a)\bullet(\widehat{\tau}\hatot\id)(\id\hatot\widehat{\tau})(\id\hatot\da)\widehat{\tau}\da(x)\\
		&\quad\quad-(\beta\ot\id)\beta(a)\bullet(\db\hatot\id)\db(x)+(\tau\ot\id)(\alpha\ot\id)\beta(a)\bullet(\widehat{\tau}\hatot\id)(\da\hatot\id)\db(x)\\
		&\quad\quad+(\beta\ot\id)\tau\alpha(a)\bullet(\db\hatot\id)\widehat{\tau}\da(x)-(\tau\ot\id)(\alpha\ot\id)\tau\alpha(a)
		\bullet(\widehat{\tau}\hatot\id)(\da\hatot\id)\widehat{\tau}\da(x)\\
		&\quad\quad+(\tau\ot\id)(\beta\ot\id)\beta(a)\bullet(\widehat{\tau}\hatot\id)(\db\hatot\id)\db(x)-(\alpha\ot\id)\beta(a)\bullet(\da\hatot\id)\db(x)\\
		&\quad\quad-(\tau\ot\id)(\beta\ot\id)\tau\alpha(a)\bullet(\widehat{\tau}\hatot\id)(\db\hatot\id)\widehat{\tau}\da(x)+(\alpha\ot\id)\tau\alpha(a)
		\bullet(\da\hatot\id)\widehat{\tau}\da(x).
\end{flalign*}}
By Eqs. \eqref{RNov-Codialg-1}-\eqref{RNov-Codialg-4}, we obtain
{\small \begin{flalign*}
&(\id \hatot \db)\db(x)=(\widehat{\tau}\hatot\id)(\id\hatot\db)\db(x),\\
&(\id\hatot\widehat{\tau})(\id\hatot\da)\db(x)=(\db\hatot\id)\widehat{\tau}\da(x)=(\da\hatot\id)\widehat{\tau}\da(x),\\
&(\widehat{\tau}\hatot\id)(\id\hatot\widehat{\tau})(\id\hatot\da)\db(x)=(\widehat{\tau}\hatot\id)(\da\hatot\id)\widehat{\tau}\da(x)=(\widehat{\tau}\hatot\id)(\db\hatot\id)\widehat{\tau}\da(x),\\
&(\widehat{\tau}\hatot\id)(\da\hatot\id)\db(x)=(\widehat{\tau}\hatot\id)(\db\hatot\id)\db(x),\\
&(\id\hatot\db)\widehat{\tau}\da(x)=(\widehat{\tau}\hatot\id)(\db\hatot\id)\db(x)+(\widehat{\tau}\hatot\id)(\id\hatot\widehat{\tau})(\id\hatot\da)\db(x)-(\widehat{\tau}\hatot\id)(\id\hatot\db)\db(x),\\
&(\id\hatot\widehat{\tau})(\id\hatot\da)\widehat{\tau}\da(x)=( \widehat{\tau}\hatot\id)(\id\hatot \widehat{\tau})(\id\hatot \da)\widehat{\tau}\da(x)+(\widehat{\tau}\hatot\id)(\da \hatot\id)\widehat{\tau}\da(x)-(\db \hatot\id)\widehat{\tau}\da(x).
\end{flalign*}}

Then applying these equalities together with Eqs. (\mref{RNov-Codialg-1})-(\mref{RNov-Codialg-4}) and Eqs. (\mref{cob1})-(\mref{cob4}), we get
{\small
	\begin{flalign*}
		&((\id\hatot\delta)\delta-(\tau\hatot \id)(\id \hatot \delta)\delta-(\delta\hatot \id)\delta+(\tau\hatot\id)(\delta\hatot \id)\delta)(a\ot x)\\
		&\quad=(\id \ot\beta ) \beta(a)\bullet (\id \hatot \db)\db(x)-(\id\ot \tau)(\id\ot \alpha)\beta(a) \bullet (\id\hatot\widehat{\tau})(\id\hatot\da)\db(x)\\
		&\quad\quad-(\id \ot\beta )\tau\alpha(a) \bullet	((\widehat{\tau}\hatot\id)(\db\hatot\id)\db(x)+(\widehat{\tau}\hatot\id)(\id\hatot\widehat{\tau})(\id\hatot\da)\db(x)\\
    	&\quad\quad-(\widehat{\tau}\hatot\id)(\id\hatot\db)\db(x))	+(\id\ot\tau)(\id\ot\alpha)\tau\alpha(a) \bullet (( \widehat{\tau}\hatot\id)(\id\hatot \widehat{\tau})(\id\hatot \da)\widehat{\tau}\da(x)\\
    	&\quad\quad++(\widehat{\tau}\hatot\id)(\da \hatot\id)\widehat{\tau}\da(x)-(\db \hatot\id)\widehat{\tau}\da(x))\\
    	&\quad\quad-(\tau\ot \id)(\id\ot \beta)\beta(a)\bullet(\widehat{\tau}\hatot \id)(\id\hatot\db)\db(x)\\
		&\quad\quad+(\tau\ot\id)(\id \ot\tau)(\id\ot\alpha)\beta(a)\bullet(\widehat{\tau}\hatot\id)(\id\hatot\widehat{\tau})(\id\hatot\da)\db(x)\\
		&\quad\quad+(\tau\ot\id)(\id\ot\beta)\tau\alpha(a)\bullet((\db \hatot \id)\db(x)+(\id \hatot\tau )(\id \hatot\da(x) )\db(x)-(\id \hatot\db )\db(x))\\
		&\quad\quad-(\tau\ot\id)(\id\ot\tau)(\id\ot\alpha)\tau\alpha(a)\bullet(\widehat{\tau}\hatot\id)(\id\hatot\widehat{\tau})(\id\hatot\da)\widehat{\tau}\da(x)\\
		&\quad\quad-(\beta\ot\id)\beta(a)\bullet(\db\hatot\id)\db(x)+(\tau\ot\id)(\alpha\ot\id)\beta(a)\bullet(\widehat{\tau}\hatot\id)(\da\hatot\id)\db(x)\\
		&\quad\quad+(\beta\ot\id)\tau\alpha(a)\bullet(\db\hatot\id)\widehat{\tau}\da(x)-(\tau\ot\id)(\alpha\ot\id)\tau\alpha(a)
		\bullet(\widehat{\tau}\hatot\id)(\da\hatot\id)\widehat{\tau}\da(x)\\
		&\quad\quad+(\tau\ot\id)(\beta\ot\id)\beta(a)\bullet(\widehat{\tau}\hatot\id)(\db\hatot\id)\db(x)-(\alpha\ot\id)\beta(a)\bullet(\da\hatot\id)\db(x)\\
		&\quad\quad-(\tau\ot\id)(\beta\ot\id)\tau\alpha(a)\bullet(\widehat{\tau}\hatot\id)(\db\hatot\id)\widehat{\tau}\da(x)+(\alpha\ot\id)\tau\alpha(a)
		\bullet(\da\hatot\id)\widehat{\tau}\da(x)\\		
		&\quad=\big((\id\otimes\beta)\beta(a)+(\tau\otimes \id)((\alpha+\beta)\otimes \id)\beta(a)-((\alpha+\beta)\otimes \id)\beta(a)\\
		&\quad\quad-(\tau\otimes \id)(\id\otimes \beta)\beta(a)\big)\bullet(\id \hatot \db)\db(x)-(\id\ot \tau)(\tau\ot \id)\big((\alpha\otimes \id)\alpha(a)\\
		&\quad\quad+(\tau\otimes \id)(\id\otimes \alpha)\beta(a)-(\id\otimes(\alpha+\beta))\alpha(a)-(\tau\otimes \id)(\beta\otimes \id)\alpha(a)\big)\bullet(\id\hatot\widehat{\tau})(\id\hatot\da)\db(x)\\
		&\quad\quad+(\tau\ot \id)(\id\ot \tau)(\tau\ot \id)\big((\alpha\otimes \id)\alpha(a)+(\tau\otimes \id)(\id\otimes \alpha)\beta(a)-(\id\otimes(\alpha+\beta))\alpha(a)\\
		&\quad\quad-(\tau\otimes \id)(\beta\otimes \id)\alpha(a)\big)\bullet(\widehat{\tau}\hatot\id)(\id\hatot\widehat{\tau})(\id\hatot\da)\db(x)\\
		&\quad\quad+\big((\id \otimes \tau)(\beta\otimes \id)\alpha(a)-((\alpha+\beta)\otimes \id)\beta(a)\big)\bullet(\db\hatot \id)\db(x)\\
		&\quad\quad-(\tau\ot \id)\big((\id \otimes \tau)(\beta\otimes \id)\alpha(a)-((\alpha+\beta)\otimes \id)\beta(a)\big)\bullet( \widehat{\tau}\hatot\id)(\db\hatot\id)\db(x)\\
		&\quad\quad+(\id\ot\tau)(\tau\ot\id)\big((\id\otimes\tau)(\alpha\otimes \id)\alpha(a)-(\alpha\otimes \id)\alpha(a)\big)\bullet( \widehat{\tau}\hatot\id)(\id\hatot \widehat{\tau})(\id\hatot \da)\widehat{\tau}\da(x)\\
		&\quad=0.
\end{flalign*}}
Hence Eq. \eqref{co-pL} holds, that is,  $(L, \delta)$ is a \complete pre-Lie coalgebra.

\end{proof}

Finally, we present the coalgebra version of Theorem \ref{t5}.

\begin{theorem}\label{t6}
Let $A$ be a vector space with two linear maps $\alpha$, $\beta: A\rightarrow A\otimes A$, and $(B={\bf k}[t,t^{-1}], \Delta)$ be the \complete right Novikov coalgebra given in Example \mref{Laurent-coproduct}. Then
 $(L, \delta)$ is a \complete pre-Lie coalgebra with $\delta$ defined by Eq. (\ref{co-dipL}) if and only if $(A,\alpha,\beta)$ is a pre-Novikov coalgebra.
\end{theorem}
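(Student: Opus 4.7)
The plan is to prove the two directions separately. For the ``if'' direction, the key observation is that the completed right Novikov coalgebra $(B,\Delta)$ from Example~\ref{Laurent-coproduct} becomes a completed right Novikov co-dialgebra upon setting $\da=\db=\Delta$; indeed, with $\da=\db$, the co-dialgebra axiom~\eqref{RNov-Codialg-2} is trivial, Eq.~\eqref{RNov-Codialg-1} reduces to Eq.~\eqref{top-Nov-Coalg-2}, and Eqs.~\eqref{RNov-Codialg-3} and \eqref{RNov-Codialg-4} reduce to Eq.~\eqref{top-Nov-Coalg-1}. Hence, if $(A,\alpha,\beta)$ is a pre-Novikov coalgebra, then Theorem~\ref{tt6} applied to $(A,\alpha,\beta)$ and $(B,\Delta,\Delta)$ yields directly that $(L,\delta)$ is a completed pre-Lie coalgebra with $\delta$ given by Eq.~\eqref{co-dipL}.

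For the ``only if'' direction, the plan is to dualize the coefficient-comparison argument of Theorem~\ref{t5}. Expand $\delta(a\otimes t^j)$ using $\Delta(t^j)=\sum_{i\in\ZZ}(i+1)\,t^{-i-2}\otimes t^{j+i}$ together with Eq.~\eqref{co-dipL}, and then compute each of the eight terms appearing in the completed pre-Lie coalgebra identity~\eqref{co-pL} evaluated at $a\otimes t^j$. Each such term is a triple infinite sum in $L\hatot L\hatot L$ whose $B$-components are tensors $t^{m_1}\otimes t^{m_2}\otimes t^{m_3}$ satisfying a linear constraint in $(m_1,m_2,m_3)$ determined by $j$, and whose $A$-components are paired with scalar polynomials of bidegree at most one in two free summation indices. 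After reindexing so that $(m_1,m_2,m_3)$ parametrizes the sum, the coefficient of each monomial $t^{m_1}\otimes t^{m_2}\otimes t^{m_3}$ is a polynomial identity in $A\otimes A\otimes A$ depending on two free parameters. Comparing coefficients of the distinguishable polynomial shapes in these parameters (dually to the coefficients $i(i-1)$, $ij$, $jk$, $k(k-1)$ isolated in Theorem~\ref{t5}) should produce exactly the four pre-Novikov coalgebra axioms~\eqref{cob1}--\eqref{cob4}.

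The main obstacle will be the bookkeeping forced by the flip operators $\tau$ and $\widehat{\tau}$, which appear both in the definition of $\delta$ and in the pre-Lie coalgebra identity, and which permute the roles of the three tensor factors. Considerable care is required to rewrite every one of the eight terms over a common indexing of $(m_1,m_2,m_3)$ before the coefficient comparison can be carried out. The crucial verification is that, after this reindexing, the four characteristic polynomial shapes in the free parameters remain linearly independent, so that each of Eqs.~\eqref{cob1}--\eqref{cob4} can indeed be isolated as a standalone identity rather than emerging only as a linear combination of the four axioms.
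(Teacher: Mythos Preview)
Your proposal is essentially correct and follows the same approach as the paper. The ``if'' direction is identical: the paper also invokes Theorem~\ref{tt6} directly after noting that $(B,\Delta)$ is a completed right Novikov coalgebra (hence a co-dialgebra with $\da=\db=\Delta$). For the ``only if'' direction, the paper proceeds exactly as you outline---write $\delta(at^k)$ explicitly via Eq.~\eqref{pre-Lie-coproduct}, expand the full pre-Lie coalgebra identity as a double sum with coefficients $(i+1)(j+1)$, and then extract Eqs.~\eqref{cob1}--\eqref{cob4} by comparing coefficients of specific monomials $t^{m_1}\otimes t^{m_2}\otimes t^{m_3}$ for chosen values of $k$.

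Two minor corrections to your expectations. First, the expansion has sixteen terms, not eight: the identity~\eqref{co-pL} has four summands, and each nested $\delta$ contributes two pieces (from $\beta$ and from $\widehat\tau\alpha$). Second, your ``crucial verification'' that each axiom can be isolated as a standalone identity turns out to be slightly optimistic. In the paper's execution, Eqs.~\eqref{cob3} and~\eqref{cob4} are obtained cleanly (from $k=2$ and the coefficients of $t^{-1}\otimes t^{-1}\otimes 1$ and $1\otimes t^{-1}\otimes t^{-1}$), but the coefficients that yield Eqs.~\eqref{cob1} and~\eqref{cob2} (from $k=0$) come out as linear combinations involving the already-established Eqs.~\eqref{cob3} and~\eqref{cob4}. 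So the extraction is sequential rather than parallel, and your polynomial-shape argument would need to accommodate this dependency.
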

\begin{proof}
 If $(A,\alpha,\beta)$ is a pre-Novikov coalgebra,  then $(L, \delta)$ is a \complete pre-Lie coalgebra by Theorem \ref{tt6}. Therefore, we only need to prove the ``only if " part. Let $\alpha(a)=\sum_{[a]}
a_{[1]}\otimes a_{[2]}$ and $\beta(a)=\sum_{(a)}a_{(1)}\ot a_{(2)}$ for $a\in A$. Then
$\delta$ is given by
 \begin{eqnarray}
\delta(at^k)=\sum_{i\in \mathbb{Z}}(i+1)(\sum_{(a)}a_{(1)}t^{-i-2}\otimes a_{(2)}t^{k+i}-\sum_{[a]}a_{[2]}t^{k+i}\otimes a_{[1]}t^{-i-2}), \quad  a\in A, k\in \mathbb{Z},\label{pre-Lie-coproduct}
\end{eqnarray}
where $at^k:=a\ot t^k.$
Applying the definition of $\delta$, we obtain
{\small
\begin{flalign*}
&0=((\id\hatot\delta)\delta-(\tau\hatot \id)(\id \hatot \delta)\delta-(\delta\hatot \id)\delta+(\tau\hatot\id)(\delta\hatot \id)\delta)(at^k)\\
&\;\;=\sum_{i,j\in \mathbb{Z}}(i+1)(j+1)\sum_{(a),[a]} (  a_{(1)}t^{-i-2}\ot a_{(21) }t^{-j-2}\ot a_{(22) }t^{i+j+k}-a_{ (1)}t^{-i-2}\ot a_{(2)[2] }t^{i+j+k}\ot a_{(2)[1] }t^{-j-2}   \\
&\quad\quad- a_{[2] }t^{i+k}\ot a_{[1](1) }t^{-j-2}\ot a_{[1](2) }t^{-i-2+j}+ a_{[2] }t^{i+k}\ot a_{[12] }t^{-i-2+j}\ot a_{[11] } t^{-j-2}\\
&\quad\quad- a_{(21) }t^{-j-2}\ot a_{(1) }t^{-i-2}\ot a_{(22) } t^{i+j+k}+ a_{(2)[2] } t^{i+j+k}\ot a_{(1) }t^{-i-2}\ot a_{(2)[1] }t^{-j-2} \\
&\quad\quad+ a_{[1](1) }t^{-j-2}\ot a_{[2]}t^{i+k}\ot a_{[1]( 2)}t^{-i-2+j}- a_{[12] }t^{-i-2+j}\ot a_{ [2]}t^{i+k}\ot a_{[11] } t^{-j-2}\\
&\quad\quad- a_{(11) }t^{-j-2}\ot a_{(12) }t^{-i-2+j}\ot a_{(2) }t^{i+k}+ a_{(1)[2] }t^{-i-2+j}\ot a_{(1)[1] }t^{-j-2}\ot a_{(2) } t^{i+k}\\
&\quad\quad+ a_{[2](1) }t^{-j-2}\ot a_{[2](2) }t^{i+j+k}\ot a_{[1] }t^{-i-2}-a_{[22] }t^{i+j+k}\ot a_{[21] }t^{-j-2}\ot a_{[1] } t^{-i-2}\\
&\quad\quad+ a_{(12) }t^{-i-2+j}\ot a_{(11) }t^{-j-2}\ot a_{(2) }t^{i+k}- a_{(1)[1] }t^{-j-2}\ot a_{(1)[2] }t^{-i-2+j}\ot a_{(2) } t^{i+k}\\
&\quad\quad- a_{[2](2) }t^{i+j+k}\ot a_{[2](1) }t^{-j-2}\ot a_{[1] }t^{-i-2}+ a_{[21] }t^{-j-2}\ot a_{[22] }t^{i+j+k}\ot a_{[1] }t^{-i-2} ).
\end{flalign*}
}

Let $k=2$. Comparing the coefficients of $t^{-1}\otimes t^{-1}\otimes 1$, we obtain
\begin{flalign*}
0=\sum_{[a]} (a_{[2]}\otimes a_{[12]}\otimes a_{[11]}-a_{[12]} \otimes a_{[2]}\otimes a_{[11]})=\tau_{13}\tau_{23}((\id\otimes\tau)(\alpha\otimes \id)\alpha(a)-(\alpha\otimes \id)\alpha(a)),
\end{flalign*}
where $\tau_{13}(a_1\otimes a_2\otimes a_3)=a_3\otimes a_2\otimes a_1$, $\tau_{23}(a_1\otimes a_2\otimes a_3)=a_1\otimes a_3\otimes a_2$, for all $a_1$, $a_2$, $a_3\in A$.
 Comparing the coefficients of $1\otimes t^{-1}\otimes t^{-1}$, we get
\begin{flalign*}
0&=\sum_{(a),[a]} (a_{[1](1)}\otimes a_{[2]}\otimes a_{[1](2)}-a_{(11)} \otimes a_{(12)}\otimes a_{(2)}-a_{(1)[1]}\otimes a_{(1)[2]}\otimes a_{(2)})\\
&=(\id \otimes \tau)(\beta\otimes \id)\alpha(a)-((\alpha+\beta)\otimes \id)\beta(a).
\end{flalign*}
Similarly, let $k=0$. Comparing the coefficients of $t^{-1}\otimes t^{-3}\otimes 1$,
we obtain
\begin{flalign*}
	0&=\sum_{(a),[a]} (a_{[12]}\otimes a_{[2]}\otimes a_{[11]}+a_{(2)[2]}\otimes a_{(1)}\otimes a_{(2)[1]}-a_{[22]}\otimes a_{[21]}\otimes a_{[1]}\\
	&\quad-a_{[2](2)}\otimes a_{[2](1)}\otimes a_{[1]}-a_{(12)} \otimes a_{(11)}\otimes a_{(2)}-a_{(1)[2]}\otimes a_{(1)[1]}\otimes a_{(2)})\\
	&=\tau_{13}((\alpha\otimes \id)\alpha(a)+(\tau\otimes \id)(\id\otimes \alpha)\beta(a)-(\id\otimes(\alpha+\beta))\alpha(a)-(\tau\otimes \id)(\beta\otimes \id)\alpha(a))\\
	&\quad +(\tau\ot \id)((\id \otimes \tau)(\beta\otimes \id)\alpha(a)-((\alpha+\beta)\otimes \id)\beta(a))\\
	&\quad-\tau_{13}\tau_{23}((\id\otimes\tau)(\alpha\otimes \id)\alpha(a)-(\alpha\otimes \id)\alpha(a)).
\end{flalign*}
 Comparing the coefficients of $1\otimes t^{-3}\otimes t^{-1}$, we obtain
\begin{flalign*}
0&=\sum_{(a),[a]} (a_{(1)}\ot a_{(21)}\ot a_{(22)}+a_{[2]}\otimes a_{[1](1)} \otimes a_{[1](2)}-a_{[1](1)}\otimes a_{[2]}\otimes a_{[1](2)}-a_{(21)}\ot a_{(1)}\ot a_{(22)})\\
&=(\id\otimes\beta)\beta(a)+(\tau\otimes \id)((\alpha+\beta)\otimes \id)\beta(a)-((\alpha+\beta)\otimes \id)\beta(a)-(\tau\otimes \id)(\id\otimes \beta)\beta(a)\\
&\quad +(\tau\ot \id)((\id \otimes \tau)(\beta\otimes \id)\alpha(a)-((\alpha+\beta)\otimes \id)\beta(a))\\
&\quad -((\id \otimes \tau)(\beta\otimes \id)\alpha(a)-((\alpha+\beta)\otimes \id)\beta(a)).
\end{flalign*}
Obviously, Eqs. \eqref{cob1}-\eqref{cob4} hold, hence $(A, \alpha ,\beta)$ is a pre-Novikov coalgebra.

This completes the proof.
\end{proof}

\section{Completed pre-Lie bialgebras from pre-Novikov bialgebras and quadratic right Novikov algebras}
In this section, we first give the reason why we choose a quadratic right Novikov algebra instead of a right Novikov dialgebra with a special bilinear form in the finite dimensional case. Then we give a construction of compelted pre-Lie bialgebras from pre-Novikov bialgebras and quadratic $\ZZ$-graded right Novikov algebras and lift the affinization of pre-Novikov algebras to the context of bialgebras.

\subsection{Constructions of finite-dimensional pre-Lie bialgebras from the view of quadratic algebras}\label{3.1}
First, we recall the definition of pre-Novikov bialgebras.
\begin{definition}\cite{LH}
Let $(A, \lhd, \rhd)$ be a pre-Novikov algebra and $(A,\alpha,\beta)$ be a pre-Novikov coalgebra. If they also satisfy the following conditions:
\begin{align}
(\tau\alpha+\beta)(a\circ b)&=((L_{\vartriangleright}+2R_{\vartriangleleft})(a)\otimes \id+ \id\otimes L_{\circ}(a))(\tau\alpha+\beta)(b)\label{lfd1}\\
&\quad+(\id\otimes R_{\circ}(b))(2\tau\alpha+\beta)(a)-(R_{\vartriangleleft}(b)\otimes\id)\tau\alpha (a),\nonumber\\
\tau\alpha(a\circ b-b\circ a)&=((L_{\vartriangleright}+R_{\vartriangleleft})(a)\otimes\id+\id\otimes L_{\circ}(a))\tau\alpha (b)\label{lfd2}\\
&\quad-((L_{\vartriangleright}+R_{\vartriangleleft})(b)\otimes\id+\id\otimes L_{\circ}(b))\tau\alpha (a),\nonumber\\
(\alpha+\beta)(a\vartriangleright b+b\vartriangleleft a)&=(\id\otimes (R_{\vartriangleright}+L_{\vartriangleleft})(b))(2\tau\alpha+\beta)(a)-(L_{\vartriangleleft}(b)\otimes\id)\alpha (a)\label{lfd3}\\
&\quad+((L_{\vartriangleright}+2R_{\vartriangleleft})(a)\otimes\id+\id\otimes(L_{\vartriangleright}+R_{\vartriangleleft})(a))(\alpha+\beta)(b),\nonumber\\
(\alpha+\beta-\tau\alpha-\tau\beta)(b\vartriangleleft a)&=(\id\otimes L_{\vartriangleleft}(b))(\tau\alpha+\beta)(a)-( L_{\vartriangleleft}(b)\otimes \id)(\alpha+\tau\beta)(a)\label{lfd4}\\
&\quad+(\id\otimes R_{\vartriangleleft}(a))(\alpha+\beta)(b)-(R_{\vartriangleleft}(a)\otimes\id)(\tau\alpha+\tau\beta)(b),\nonumber\\
(\id\otimes R_{\circ}(b)-R_{\vartriangleleft}&(b)\otimes\id)(\tau\alpha+\beta)(a)=(\id\otimes R_{\circ}(a)-R_{\vartriangleleft}(a)\otimes\id)(\tau\alpha+\beta)(b),\label{lfd5}\\
\tau\alpha(a\circ b)&=(\id\otimes R_{\circ}(b))\tau\alpha(a)+((L_{\vartriangleright}+R_{\vartriangleleft})(a)\otimes\id)(\tau\alpha+\beta)(b),\label{lfd6}\\
(\id\otimes(R_{\vartriangleright}+L_{\vartriangleleft})(b))\tau\alpha(a)&=((R_{\vartriangleright}+L_{\vartriangleleft})(b)\otimes \id)\alpha(a)+(\id \otimes(L_{\vartriangleright}+R_{\vartriangleleft})(a))(\tau\alpha+\tau\beta)(b)\label{lfd7}\\
&\quad-((L_{\vartriangleright}+R_{\vartriangleleft})(a)\otimes\id)(\alpha+\beta)(b),\nonumber\\
(\alpha+\beta)(b\vartriangleleft a)&=(\id\otimes(R_{\vartriangleright}+L_{\vartriangleleft})(b))(\tau\alpha+\beta)(a)+(R_{\vartriangleleft}(a)\otimes\id)(\alpha+\beta)(b),\;\;a, b\in A,\label{lfd8}
\end{align}
then we call $(A,\vartriangleleft,\vartriangleright,\alpha,\beta)$ a \textbf{pre-Novikov bialgebra}.
\end{definition}

\begin{example}\label{ex1}\cite{LH}
Let $(A=\mathbf{k}e_1\oplus \mathbf{k}e_2,\vartriangleleft,\vartriangleright)$ be a two-dimensional vector space with binary operations
$\vartriangleleft,\vartriangleright$ given by
\begin{flalign*}
&e_i\vartriangleright e_j=0, \;\;i,j\in\{1,2\},\\
e_1\vartriangleleft e_1=e_1,\quad &e_1\vartriangleleft e_2=e_2,\quad e_2\vartriangleleft e_1=e_2,\quad e_2\vartriangleleft e_2 =0.
\end{flalign*}
Then $(A,\vartriangleleft,\vartriangleright)$ is a pre-Novikov algebra. Define linear maps $\alpha,\beta:A\rightarrow A\otimes A$ by
\begin{flalign*}
&\alpha(e_1)=e_2\otimes e_2, \quad \ \ \alpha(e_2)=0,\\
&\beta(e_1)=-e_2\otimes e_2, \quad  \beta(e_2)=0.
\end{flalign*}
Then $(A,\vartriangleleft,\vartriangleright,\alpha,\beta)$ is a pre-Novikov bialgebra.
\end{example}

\begin{definition}\cite{HBG}
	Let $(A,\circ )$ be a Novikov algebra. If there is a skew-symmetric nondegenerate bilinear form $\omega (\cdot,\cdot)$ on $A$ satisfying
	\begin{align}
		\omega (a\circ b,c)-\omega (a\circ c+c\circ a,b)+\omega (c\circ b,a)=0 ,\quad a,b,c\in A,\label{qn}
	\end{align}
	then $(A,\circ ,\omega(\cdot,\cdot))$ is called a \textbf{quasi-Frobenius Novikov algebra}.
\end{definition}

\begin{proposition}\cite[Theorem 2.10]{LH}\label{qN-PN}
	Let $(A,\circ ,\omega(\cdot,\cdot))$ be a quasi-Frobenius Novikov algebra. Then there is a compatible pre-Novikov algebra structure on $A$ given by
	\begin{align}
		\omega (a\vartriangleright b,c)&=\omega(a\circ c+c\circ a,b),\label{t11}\\
		\omega(a\vartriangleleft b,c)&=\omega(a,c\circ b),\quad a,b,c\in A,\label{t12}
	\end{align}
	such that $(A,\circ )$ is the associated Novikov algebra of $(A,\vartriangleleft,\vartriangleright)$. This pre-Novikov algebra is called the \textbf{associated pre-Novikov algebra} of $(A,\circ ,\omega(\cdot,\cdot))$.
\end{proposition}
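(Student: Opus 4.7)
The plan is to proceed in three stages: well-definedness, recovery of $\circ$, and verification of the four pre-Novikov axioms.

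First, because $\omega$ is nondegenerate, each of the defining equations \eqref{t11}--\eqref{t12} uniquely determines a bilinear operation on $A$, so $\vartriangleleft$ and $\vartriangleright$ are well defined. Next, I would show $(A,\circ)$ is the associated Novikov algebra of $(A,\vartriangleleft,\vartriangleright)$, i.e.\ that $a\vartriangleleft b+a\vartriangleright b=a\circ b$. Pairing the left side with an arbitrary $c$ gives
\begin{equation*}
\omega(a\vartriangleleft b,c)+\omega(a\vartriangleright b,c)=\omega(a,c\circ b)+\omega(a\circ c+c\circ a,b).
\end{equation*}
The skew-symmetry of $\omega$ converts $\omega(a,c\circ b)$ into $-\omega(c\circ b,a)$, and the quasi-Frobenius identity \eqref{qn} rearranges to exactly $\omega(a\circ b,c)$, so nondegeneracy finishes this step.

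Third, I would verify the four pre-Novikov axioms \eqref{pN-1}--\eqref{pN-4}. For each axiom, the strategy is uniform: pair both sides with an arbitrary $d\in A$ via $\omega$, unfold every $\vartriangleleft$ and $\vartriangleright$ using \eqref{t11}--\eqref{t12} (together with the skew-symmetry of $\omega$ whenever a ``$\vartriangleright$'' sits in the second slot), and reduce both sides to identical expressions in $(A,\circ)$ using the two Novikov identities, namely left-symmetry of the associator and right-commutativity $(x\circ y)\circ z=(x\circ z)\circ y$. Axiom \eqref{pN-4} is immediate: both sides pair with $d$ to give $\omega(a,(d\circ c)\circ b)$ and $\omega(a,(d\circ b)\circ c)$, which coincide by right-commutativity. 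Axiom \eqref{e20} follows similarly after one pre-Lie and one right-commutativity manipulation, as the earlier informal check shows that $(a\circ b)\circ d+d\circ(a\circ b)=a\circ(d\circ b)+(d\circ b)\circ a$.

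The main obstacle will be \eqref{pN-1}, since $\vartriangleright$ appears nested and so the defining formula \eqref{t11} must be applied twice, producing $\omega(u,b\vartriangleright c)$-type terms that require skew-symmetry before the second unfolding. After bookkeeping, both sides of \eqref{pN-1} pair with $d$ to yield sums of terms of the form $\omega(P(a,b,c,d),c)$ where $P$ is a degree-$4$ monomial in $\circ$; the verification then amounts to a purely Novikov-algebraic identity. Axiom \eqref{pN-2} (the mixed $\vartriangleright/\vartriangleleft$ relation) is of similar flavour but lighter, using one application of each defining equation. I would carry out \eqref{pN-4}, \eqref{e20}, \eqref{pN-2}, \eqref{pN-1} in that order of increasing complexity, reusing intermediate identities already established.
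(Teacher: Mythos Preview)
The paper does not give a proof of this proposition; it is simply quoted from \cite[Theorem 2.10]{LH} with no argument supplied here. So there is no in-paper proof to compare against.

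Your plan is the natural direct verification and is correct. The recovery of $\circ$ via the quasi-Frobenius identity is right, and your checks of \eqref{pN-4} and \eqref{e20} go through exactly as you sketch (the identity $(a\circ b)\circ d+d\circ(a\circ b)=a\circ(d\circ b)+(d\circ b)\circ a$ is indeed one application of right-commutativity on each side followed by left-symmetry of the associator). The remaining two axioms yield to the same pair-with-$d$-and-unfold routine; just be careful when a $\vartriangleright$ appears in the second argument of $\omega$, since you must first use skew-symmetry before applying \eqref{t11}. Nothing is missing from your outline.
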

\begin{definition}
Let $(A,\vartriangleleft,\vartriangleright)$ be a pre-Novikov algebra. If there is a skew-symmetric nondegenerate bilinear form $\omega(\cdot,\cdot)$ on $A$ such that Eqs. (\ref{t11}) and (\ref{t12}) hold, where $a\circ b=a \vartriangleleft b+a\vartriangleright b$ for all $a$, $b\in A$, then $(A,\vartriangleleft,\vartriangleright, \omega(\cdot,\cdot))$ is called a {\bf quadratic pre-Novikov algebra}.
\end{definition}

\begin{definition}\cite{LH}
	Let $(A,\vartriangleleft,\vartriangleright)$, $(A^*,\vartriangleleft_\ast,\vartriangleright_\ast)$ be pre-Novikov algebras and $(A,\circ)$, $(A^\ast,\circ_\ast)$ be their associated Novikov algebras respectively. If a quasi-Frobenius Novikov algebra $(B, \cdot, \omega(\cdot,\cdot))$ satisfies the following conditions:
	\begin{enumerate}
		\item $B$ is the direct sum of $A$ and $A^*$ as vector spaces,
		\item $(A,\vartriangleleft,\vartriangleright)$ and $(A^*,\vartriangleleft_\ast,\vartriangleright_\ast)$ are pre-Novikov subalgebras of $(B,\trianglelefteq,\trianglerighteq)$, which is the associated pre-Novikov algebra of $(B, \cdot, \omega(\cdot,\cdot))$,
		\item the bilinear form $\omega(\cdot,\cdot)$ on $B=A\oplus A^*$ is given by
		\begin{flalign}
			\omega(a+f,b+g)=\langle f,b\rangle -\langle g,a\rangle,\quad  a,b\in A,\;\;f, g\in A^*,\label{man}
		\end{flalign}
	\end{enumerate}
	then  $(B, \cdot, \omega(\cdot,\cdot))$ is called a {\bf double construction of quasi-Frobenius Novikov algebras} associated to $(A,\vartriangleleft,\vartriangleright)$ and $(A^*,\vartriangleleft_\ast,\vartriangleright_\ast)$. Denote it by $(A\oplus A^*,A,A^*,\omega(\cdot,\cdot))$.
\end{definition}
\begin{remark}\label{rmk1}
In this case, $(B,\trianglelefteq,\trianglerighteq,  \omega(\cdot,\cdot))$ is a quadratic pre-Novikov algebra.
\end{remark}
\delete{\begin{definition}
	Let $(A,\vartriangleleft,\vartriangleright)$, $(A^*,\vartriangleleft_\ast,\vartriangleright_\ast)$ be pre-Novikov algebras. If a  pre-Novikov algebra $(B, \trianglelefteq,\trianglerighteq, \omega(\cdot,\cdot))$ satisfies the following conditions:
	\begin{enumerate}
			\item $B$ is the direct sum of $A$ and $A^*$ as vector spaces,
			\item $(A,\vartriangleleft,\vartriangleright)$ and $(A^*,\vartriangleleft_\ast,\vartriangleright_\ast)$ are pre-Novikov subalgebras of $(B,\trianglelefteq,\trianglerighteq)$,
			\item the bilinear form $\omega(\cdot,\cdot)$ on $B=A\oplus A^*$ is given by
			\begin{flalign}
					\omega(a+f,b+g)=\langle f,b\rangle -\langle g,a\rangle,\quad  a,b\in A,\;\;f, g\in A^*,
				\end{flalign}
			and satisfies Eqs. \eqref{t11}-\eqref{t12},
		\end{enumerate}
	then  $(B, \cdot, \omega(\cdot,\cdot))$ is called a {\bf main triple of pre-Novikov algebras} associated to $(A,\vartriangleleft,\vartriangleright)$ and $(A^*,\vartriangleleft_\ast,\vartriangleright_\ast)$. Denote it by $(A\oplus A^*,A,A^*,\omega(\cdot,\cdot))$.
\end{definition}

}

\begin{proposition}\cite[Theorem 3.7]{LH}\label{t4}
	Let $(A,\vartriangleleft,\vartriangleright)$ be a pre-Novikov algebra and $(A,\circ )$ be the associated Novikov algebra of $(A,\vartriangleleft,\vartriangleright)$. Suppose that there is a pre-Novikov algebra $(A^\ast, \vartriangleleft_*, \vartriangleright_*)$ which is induced from a pre-Novikov coalgebra $(A, \alpha, \beta)$, whose associated Novikov algebra is denoted by $(A^\ast, \circ_\ast)$. Then $(A,\vartriangleleft,\vartriangleright,\alpha,\beta)$ is a pre-Novikov bialgebra if and only if there is a double construction of quasi-Frobenius Novikov algebras associated to $(A, \vartriangleleft,$ $\vartriangleright)$ and $(A^\ast, \vartriangleleft_\ast,\vartriangleright_\ast)$.
\delete{the following conditions are equivalent.
	\begin{enumerate}
		\item There is a double construction of quasi-Frobenius Novikov algebras associated to $(A, \vartriangleleft,$ $\vartriangleright)$ and $(A^\ast, \vartriangleleft_\ast,\vartriangleright_\ast)$;
 \item There is a main triple of pre-Novikov algebras associated to $(A, \vartriangleleft,$ $\vartriangleright)$ and $(A^\ast, \vartriangleleft_\ast,\vartriangleright_\ast)$;
		\item $(A,\vartriangleleft,\vartriangleright,\alpha,\beta)$ is a pre-Novikov bialgebra.
	\end{enumerate}}
\end{proposition}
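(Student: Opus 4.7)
My plan is to reduce the proposition to the known Manin-triple-style theorem for Novikov bialgebras from \cite{HBG}, and then lift the result to the pre-Novikov level using Proposition~\ref{qN-PN}.

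First I would check that a pre-Novikov bialgebra $(A, \vartriangleleft, \vartriangleright, \alpha, \beta)$ automatically yields a Novikov bialgebra structure on the associated Novikov algebra $(A, \circ)$, with coproduct $\Delta := \tau\alpha + \beta$. Combining Eqs.~\eqref{lfd1} and \eqref{lfd2} symmetrically should produce the $1$-cocycle condition for the Novikov bialgebra on $(A, \circ, \Delta)$, while Eqs.~\eqref{lfd5}--\eqref{lfd8} take care of the remaining coassociativity-type identities and of the dual Novikov product on $A^\ast$ (recovering $(A^\ast, \circ_\ast)$ from the associated Novikov algebras of $(A^\ast, \vartriangleleft_\ast, \vartriangleright_\ast)$). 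The Novikov-bialgebra double-construction theorem from \cite{HBG} then yields a quasi-Frobenius Novikov algebra $(B = A \oplus A^\ast, \cdot, \omega)$ with $A$ and $A^\ast$ as Novikov subalgebras, where $\omega$ is the canonical form given by Eq.~\eqref{man}.

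Next, by Proposition~\ref{qN-PN}, $(B, \cdot, \omega)$ carries an associated pre-Novikov algebra structure $(B, \trianglelefteq, \trianglerighteq)$ via Eqs.~\eqref{t11}--\eqref{t12}. The crucial step is to verify that the restrictions of $\trianglelefteq, \trianglerighteq$ to $A$ and to $A^\ast$ recover the original $(\vartriangleleft, \vartriangleright)$ and the given $(\vartriangleleft_\ast, \vartriangleright_\ast)$ respectively, so that both become pre-Novikov subalgebras of $(B, \trianglelefteq, \trianglerighteq)$. For $a, b \in A$ and $f \in A^\ast$, the identity $\omega(a \trianglerighteq b, f) = \omega(a \circ f + f \circ a, b)$ isolates the cross-products $a \cdot f$ and $f \cdot a$ in $B$, and the remaining bialgebra axioms \eqref{lfd3}--\eqref{lfd4} should enforce precisely that these cross-products decompose so that $A$ and $A^\ast$ are closed under $\trianglelefteq$ and $\trianglerighteq$. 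For the converse, given a double construction $(A \oplus A^\ast, \cdot, \omega)$, I would extract $\alpha, \beta$ from $(\vartriangleleft_\ast, \vartriangleright_\ast)$ by dualization (Eq.~\eqref{a1}) and verify each of \eqref{lfd1}--\eqref{lfd8} by pairing both sides with test elements in $A^\ast$ and translating through $\omega$ into identities in $B$; each such identity should reduce to an instance of \eqref{pN-1}--\eqref{pN-4} applied to mixed inputs in $B$, together with the subalgebra property of $A$ and $A^\ast$.

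The hardest part will be the bookkeeping: each of the eight bialgebra axioms must be matched with a specific piece of the pre-Novikov subalgebra data on $B$, and this requires a careful case analysis on whether each input lies in $A$ or in $A^\ast$. Organizing the axioms into families governed by $\alpha$, $\beta$, and $\tau\alpha + \beta$ respectively, and tracking sign conventions uniformly during dualization, will be essential to keeping the verification tractable.
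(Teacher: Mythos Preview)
The paper does not prove this proposition; it is quoted verbatim from \cite[Theorem~3.7]{LH}, so there is no in-paper argument to compare against. What I can do is assess whether your route would actually reach the target.

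Your strategy has a real gap at step~(b)$\to$(d). When you pass from the pre-Novikov bialgebra to the Novikov bialgebra $(A,\circ,\Delta)$ and then invoke the double construction from \cite{HBG}, the Novikov product you obtain on $B=A\oplus A^\ast$ depends only on $\circ$ and on $\circ_\ast$ (the cross-terms in the matched pair use $L_\circ^\ast$, $R_\circ^\ast$ and their $\ast$-counterparts). Consequently the associated pre-Novikov structure $(B,\trianglelefteq,\trianglerighteq)$ produced by Proposition~\ref{qN-PN} is determined by $\circ$ and $\circ_\ast$ alone and cannot see the individual splittings $\vartriangleleft,\vartriangleright$ or $\alpha,\beta$. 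So the assertion that the restrictions to $A$ and $A^\ast$ recover the \emph{given} $(\vartriangleleft,\vartriangleright)$ and $(\vartriangleleft_\ast,\vartriangleright_\ast)$ cannot be extracted from this construction: two different pre-Novikov bialgebras with the same associated Novikov bialgebra would produce the same $(B,\trianglelefteq,\trianglerighteq)$, yet you must land on different pre-Novikov subalgebras. Axioms \eqref{lfd3}--\eqref{lfd4} cannot repair this, because by that point you have already discarded the information they encode.

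The argument in \cite{LH} works at the pre-Novikov level throughout: one writes down a matched pair of pre-Novikov algebras on $A\oplus A^\ast$ whose cross-actions are built from $L_{\vartriangleright},R_{\vartriangleleft}$ and their duals (not from $L_\circ,R_\circ$), and shows directly that the resulting structure is a pre-Novikov algebra with the required invariance of $\omega$ if and only if Eqs.~\eqref{lfd1}--\eqref{lfd8} hold. If you want to keep your reduction idea, you would need to replace the \cite{HBG} double by this pre-Novikov matched pair from the outset; the Novikov-level detour is not merely circuitous but actually loses what you need.
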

\delete{ \begin{proof}
	It is straightforward that (a) and (b) are equivalent. Moreover, (a) and (c) are equivalent by \cite[Theorem 3.7]{LH}.
\end{proof}}


Next, we recall some results about pre-Lie bialgebras.

\begin{definition}\cite{Bai1, LZB} \label{pre-lia3}
Let $(L, \circ)$ be a pre-Lie algebra and $(L,
\delta)$ be a pre-Lie coalgebra. If the following
compatibility conditions are satisfied:
\begin{align}
&(\delta-\tau\delta)(a\circ b)-(L_{\circ}(a)\otimes \id)(\delta-\tau\delta)(b)-(\id\otimes L_{\circ}(a))(\delta-\tau\delta)(b)\label{eq:pre-Lbi1}\\
&\quad-(\id\otimes R_{\circ}(b))\delta(a)+(R_{\circ}(b)\otimes \id)\tau\delta(a)=0, \nonumber\\
&\delta(a\circ b-b\circ a)-(\id\otimes (R_{\circ}(b)-L_{\circ}(b)))\delta(a)-(\id \otimes(L_{\circ}(a)-R_{\circ}(a)))\delta(b)\label{eq:pre-Lbi2}\\
&\quad-(L_{\circ}(a)\otimes\id)\delta(b)+(L_{\circ}(b)\otimes\id)\delta(a)=0,\quad     a, b\in L,\nonumber
\end{align}
then we call $(L, \circ,\delta)$ a {\bf pre-Lie bialgebra}.
\end{definition}

Recall that
a Lie algebra $(\mathcal{G},[\cdot, \cdot])$ is called a  \textbf{symplectic Lie algebra} (or \textbf{quasi-Frobenius Lie algebra}) if there is a nondegenerate skew-symmetric bilinear form  $\omega_p(\cdot,\cdot)$ (called a 2-cocycle)  on  $ \mathcal{G} $, that is,
	\begin{align}
		\omega_p ([x,y],z)+\omega_p ([y,z],x)+\omega_p ([z,x],y)=0 ,\quad x,y,z\in \mathcal{G}.
	\end{align}
	We denote it by $(\mathcal{G}, [\cdot, \cdot],\omega_p(\cdot, \cdot))$.
	
	A symplectic Lie algebra $(\mathcal{G}, [\cdot, \cdot],\omega_p(\cdot, \cdot))$ is called a {\bf para-K$\ddot{\texttt{a}}$hler Lie algebra} if $\mathcal{G}=\mathcal{G}_1\oplus\mathcal{G}_2$ is the direct sum of the underlying vector spaces of two Lie subalgebras in which $\omega_p(\mathcal{G}_i,\mathcal{G}_i)=0$ for $i=1,2$. Denote it by $(\mathcal{G}_1\bowtie\mathcal{G}_2,\mathcal{G}_1,\mathcal{G}_2,\omega_p(\cdot, \cdot))$.

\begin{proposition}\cite{Bai1}\label{qL-PL}
	Let	$(\mathcal{G},[\cdot, \cdot],\omega_p(\cdot, \cdot))$ be a symplectic Lie algebra. Then there is  a compatible pre-Lie algebra structure $\circ$ on $\mathcal{G}$ given by
	\begin{align}\label{t13}
		\omega_p(x\circ y,z)=-\omega_p(y,[x,z]),\quad x,y,z\in \mathcal{G}.
	\end{align}
\end{proposition}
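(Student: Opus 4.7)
The plan is to exploit the nondegeneracy of $\omega_p$ throughout and proceed in three steps. First, since $\omega_p$ is nondegenerate, the linear functional $z\mapsto -\omega_p(y,[x,z])$ on $\mathcal{G}$ is represented by a unique element of $\mathcal{G}$, which I take as the definition of $x\circ y$. This makes $\circ$ a well-defined bilinear operation on $\mathcal{G}$.

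Next, I would verify compatibility, i.e.\ that $x\circ y-y\circ x=[x,y]$ (so that the sub-adjacent Lie algebra of $(\mathcal{G},\circ)$ coincides with the given Lie algebra $(\mathcal{G},[\cdot,\cdot])$). Directly from the defining formula,
\[
\omega_p(x\circ y-y\circ x,z)=-\omega_p(y,[x,z])+\omega_p(x,[y,z]).
\]
On the other hand, rearranging the 2-cocycle identity $\omega_p([x,y],z)+\omega_p([y,z],x)+\omega_p([z,x],y)=0$ via the skew-symmetry of $\omega_p$ and of the bracket yields exactly $\omega_p([x,y],z)=\omega_p(x,[y,z])-\omega_p(y,[x,z])$, so both sides agree and nondegeneracy gives the claim.

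Finally, to establish the pre-Lie identity $(x\circ y)\circ z-x\circ(y\circ z)=(y\circ x)\circ z-y\circ(x\circ z)$, I would pair both sides against an arbitrary $w\in\mathcal{G}$ using $\omega_p$. A first application of the defining formula gives
\[
\omega_p\bigl((x\circ y)\circ z-(y\circ x)\circ z,w\bigr)=-\omega_p\bigl(z,[x\circ y-y\circ x,w]\bigr)=-\omega_p(z,[[x,y],w]),
\]
where the second equality invokes the compatibility just proved. A second application of the defining formula yields
\[
\omega_p\bigl(x\circ(y\circ z)-y\circ(x\circ z),w\bigr)=-\omega_p(y\circ z,[x,w])+\omega_p(x\circ z,[y,w])=\omega_p(z,[y,[x,w]])-\omega_p(z,[x,[y,w]]).
\]
The Jacobi identity $[[x,y],w]=[x,[y,w]]-[y,[x,w]]$ forces these two expressions to coincide, and a final appeal to nondegeneracy delivers the pre-Lie identity.

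I expect the third step to require the most bookkeeping care: the defining formula must be applied to iterated products at the right moment, and the second step must be invoked to recognize $[x\circ y-y\circ x,w]$ as $[[x,y],w]$ so that the remaining triple bracket cancels by Jacobi. Steps one and two are essentially symbol-pushing from the definitions, but step three is where the symplectic structure and the Jacobi identity conspire precisely to yield left-symmetry.
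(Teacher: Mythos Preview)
Your proof is correct and complete. The paper does not supply its own proof of this proposition; it simply cites \cite{Bai1}, so there is no in-paper argument to compare against. Your three-step approach (well-definedness via nondegeneracy, compatibility via the 2-cocycle identity, left-symmetry via Jacobi) is the standard one and is essentially what appears in the original reference.
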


\begin{definition}\cite{LZB}
Let $(L, \circ)$ be a pre-Lie algebra. If  there is a nondegenerate skew-symmetric bilinear form $\omega_p(\cdot,\cdot)$ on $L$ such that Eq. (\ref{t13}) holds, where $[x,z]=x\circ z-z\circ x$ for all $x$, $z\in L$, then $(L, \circ, \omega_p(\cdot,\cdot))$ is called a {\bf quadratic pre-Lie algebra}.
\end{definition}

\begin{remark}\label{rmk2}
By Proposition \ref{qL-PL}, for a para-K$\ddot{\texttt{a}}$hler Lie algebra $(\mathcal{G}, [\cdot, \cdot],\omega_p(\cdot, \cdot))$, there is a  quadratic pre-Lie algebra $(\mathcal{G}, \circ,\omega_p(\cdot, \cdot))$.
\end{remark}
\delete{
\begin{definition}
Let	$(A,\circ)$ be a  pre-Lie algebra. Suppose that $(A^\ast, \circ^\ast)$ is a pre-Lie algebra which induced from a pre-Lie coalgebra $(A,\delta)$. If a pre-Lie  algebra $(B, \cdot, \omega_p(\cdot,\cdot))$ satisfies the following conditions: ( for any $x,y\in A,x^*,y^*\in A^*)$
	\begin{enumerate}
			\item $B$ is the direct sum of $A$ and $A^*$ as vector spaces,
			\item 	$(A,\circ)$ and $(A^\ast, \circ^\ast)$ are pre-Novikov subalgebras of $(B, \cdot)$,
			\item the bilinear form $\omega_p(\cdot,\cdot)$ on $B=A\oplus A^*$ is given by
			\begin{flalign}\label{t14}
					\omega_p(x+x^*,y+y^*)=\langle x^*,y\rangle -\langle y^*,x\rangle,
				\end{flalign}
		and satisfies Eq. \eqref{t13},
\end{enumerate}	
then  $(B, \cdot, \omega_p(\cdot,\cdot))$ is called a {\bf main triple of pre-Lie algebras} associated to 	$(A,\circ)$ and $(A^\ast, \circ^\ast)$. Denote it by $(A\oplus A^*,A,A^*,\omega_p(\cdot,\cdot))$.
\end{definition}}

\begin{proposition}\cite[Proposition 4.2]{Bai1}\label{t41}
	Let	$(A,\circ)$ be a  pre-Lie algebra. Suppose that $(A^\ast, \circ^\ast)$ is a pre-Lie algebra which is induced from a pre-Lie coalgebra $(A,\delta)$.  Then $(A,\circ ,\delta)$ is a pre-Lie bialgebra if and only if $(\mathcal{G}(A)\bowtie \mathcal{G}(A^\ast),\mathcal{G}(A),\mathcal{G}(A^*),\omega_p(\cdot,\cdot))$ is a para-K$\ddot{a}$hler Lie algebra, where $\omega_p(\cdot,\cdot)$ is given by
\begin{eqnarray*}
\omega_p(a+f,b+g)=\langle f,b\rangle -\langle g,a\rangle,\quad  a,b\in A,\;\;f, g\in A^*.
\end{eqnarray*}
\end{proposition}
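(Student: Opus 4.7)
The plan is to mirror the Manin-triple-style argument used for the pre-Novikov case (Proposition \ref{t4}), now adapted to the pre-Lie bialgebra setting. Since $A$ is finite-dimensional, the pre-Lie coalgebra $(A,\delta)$ dualizes to the pre-Lie algebra $(A^\ast,\circ_\ast)$, and the two sub-adjacent Lie algebras $\mathcal{G}(A)$ and $\mathcal{G}(A^\ast)$ are well defined. On $B:=A\oplus A^\ast$ I would first introduce the candidate bracket which extends $[\cdot,\cdot]_{\mathcal{G}(A)}$ and $[\cdot,\cdot]_{\mathcal{G}(A^\ast)}$ by the mutual coadjoint-type actions dualized from $\circ$ and $\circ_\ast$ (equivalently, read off from $\delta$ and its counterpart $\delta^\ast$). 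The Jacobi identity for this candidate bracket is precisely the matched-pair condition for $(\mathcal{G}(A),\mathcal{G}(A^\ast))$.

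Next, I would observe that $\omega_p$ vanishes on $A\times A$ and on $A^\ast\times A^\ast$ by its very definition, so the $2$-cocycle identity
\[
\omega_p([x,y],z)+\omega_p([y,z],x)+\omega_p([z,x],y)=0
\]
collapses to the mixed cases with two elements from $A$ (resp. $A^\ast$) and one from $A^\ast$ (resp. $A$). Unpacking these mixed cocycle identities through the pairing $\langle\cdot,\cdot\rangle$ rewrites them as tensor identities in $A\otimes A\otimes A$ (and their dual) involving $\circ$, $L_\circ$, $R_\circ$ and $\delta$. Together with the matched-pair identities from the Jacobi check, I expect the combined system to match term-by-term with the two compatibility conditions \eqref{eq:pre-Lbi1} and \eqref{eq:pre-Lbi2} in Definition \ref{pre-lia3}. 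This establishes the ``only if'' direction; the para-K\"ahler splitting $B=\mathcal{G}(A)\oplus\mathcal{G}(A^\ast)$ with $\omega_p$ isotropic on each summand is automatic.

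For the converse, start from the para-K\"ahler Lie algebra $(\mathcal{G}(A)\bowtie\mathcal{G}(A^\ast),\omega_p)$. By Proposition \ref{qL-PL}, the 2-cocycle $\omega_p$ produces a compatible pre-Lie product $\bullet$ on $B$ via $\omega_p(x\bullet y,z)=-\omega_p(y,[x,z])$. Using that each $\mathcal{G}(A)$, $\mathcal{G}(A^\ast)$ is a Lie subalgebra and that $\omega_p$ is isotropic on each summand, one checks that $\bullet$ restricts to $\circ$ on $A$ and to $\circ_\ast$ on $A^\ast$; reading off the mixed components of $\bullet$ and dualizing recovers a linear map $A\to A\otimes A$ which, by uniqueness of the dual of $\circ_\ast$, is exactly $\delta$. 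Then the matched-pair plus 2-cocycle identities reverse-engineer into \eqref{eq:pre-Lbi1} and \eqref{eq:pre-Lbi2}, completing the ``if'' direction.

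The main obstacle I anticipate is the asymmetric appearance of $L_\circ$ and $R_\circ$ in \eqref{eq:pre-Lbi1} and \eqref{eq:pre-Lbi2}: under dualization, left and right multiplications by elements of $A$ on $A^\ast$ become two distinct pieces of $\delta$ (essentially the ``$\delta$'' and ``$\tau\delta$'' parts), and these must be carefully separated when translating cocycle and matched-pair identities. The bookkeeping is further complicated by the fact that the 2-cocycle condition captures the $R_\circ$-type (right-action) terms in \eqref{eq:pre-Lbi1}, whereas the matched-pair axioms capture the $L_\circ$-type (adjoint-action) terms in \eqref{eq:pre-Lbi2}; reconciling these in a dual-basis computation, and checking that no spurious identities arise, is where the technical effort concentrates.
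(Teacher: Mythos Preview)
The paper does not prove this proposition; it is quoted verbatim from \cite[Proposition~4.2]{Bai1} and used as a black box. So there is no ``paper's own proof'' to compare against: the paper's strategy is simply to cite the result.

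Your sketch is a reasonable outline of how the proof in \cite{Bai1} actually proceeds (Manin-triple/matched-pair description of the double, isotropy of the two summands under $\omega_p$, and translation of the Jacobi plus 2-cocycle identities into the compatibility conditions of Definition~\ref{pre-lia3}). One caution: your division of labor---``the 2-cocycle identity captures the $R_\circ$-terms of \eqref{eq:pre-Lbi1} while the matched-pair identities capture the $L_\circ$-terms of \eqref{eq:pre-Lbi2}''---is too clean. In Bai's argument the equivalence passes through the compatible pre-Lie product on $A\oplus A^\ast$ supplied by Proposition~\ref{qL-PL}, and both \eqref{eq:pre-Lbi1} and \eqref{eq:pre-Lbi2} arise from a single requirement: that this pre-Lie product on the double restricts correctly to $\circ$ on $A$ and $\circ_\ast$ on $A^\ast$ and that the cross terms are given by the dual representations. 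The matched-pair and 2-cocycle conditions are not decoupled in the way you suggest, so if you were to write this out you would need to track all mixed terms simultaneously rather than assigning each compatibility axiom to a single source. For the purposes of the present paper, however, none of this is needed---a citation suffices.
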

 \delete{\begin{proof}
	It is straightforward that (a) and (b) are equivalent. Moreover, (a) and (c) are equivalent by \cite[Proposition 4.2]{Bai1}.
\end{proof}}

\delete{\begin{proposition}\label{qN-PaL}\cite[proposition 4.9]{HBG}
	Let $(A,\circ ,\omega(\cdot,\cdot))$ be a quasi-Frobenius Novikov algebra and $(B=\oplus_{i\in \ZZ}B_i, \diamond, (\cdot, \cdot))$ be a
	quadratic $\ZZ$-graded right Novikov algebra. Define a binary operation $[\cdot, \cdot]$ on $A\otimes B$ by
	\begin{align}
		[a\ot x,b\ot y]=a\circ b \ot x\diamond y-b \circ a\otimes y\diamond x,\quad  a,b\in A,x,y\in B.
	\end{align}
	Then $(\mathcal{G}:=A\otimes B, [\cdot, \cdot],\omega_p(\cdot,\cdot))$ is a symplectic Lie algebra, where $\omega_p(\cdot,\cdot)$ is given by
	\begin{align}\label{om-p}
		\omega_p(a\ot x,b\ot y)=\omega(a,b)(x,y),\quad a,b\in A,x,y\in B.
	\end{align}
\end{proposition}}

Note that our goal is to lift the conclusion of Theorem \ref{di-pre-L} to a bialgebraic version. \delete{Let $(A, \triangleleft, \triangleright, \alpha, \beta)$ be a pre-Noivkov bialgebra and $(B, \dashv, \vdash)$ be a right Novikov dialgebra. By Proposition \ref{t41}, a pre-Lie bialgebra $(L=A\otimes B, \circ, \delta)$ is equivalent to a para-K$\ddot{a}$hler Lie algebra $(\mathcal{G}(L)\bowtie \mathcal{G}(L^\ast),\mathcal{G}(L),\mathcal{G}(L^*),\omega_p(\cdot,\cdot))$$=$$(\mathcal{G}(A\otimes B)\bowtie \mathcal{G}((A^\ast\otimes B^\ast),\mathcal{G}(A\otimes B),\mathcal{G}(A^*\otimes B^\ast),\omega_p(\cdot,\cdot))$. By Proposition \ref{t1}, $(A, \triangleleft, \triangleright, \alpha, \beta)$ is equivalent to a double construction $(A\oplus A^*,A,A^*,\omega(\cdot,\cdot))$ of quasi-Frobenius Novikov algebras. Therefore, it is natural to require that}
Motivated by the method used in \cite{HBG}, in the finite-dimensional case, it is natural to consider whether the tensor product $L=A\otimes B$ of a pre-Novikov bialgebra $(A,\vartriangleleft, \vartriangleright, \alpha, \beta)$ and  a  right Novikov dialgebra $(B,\dashv,\vdash)$ with a special nondegenerate bilinear form $(\cdot,\cdot)$ can be endowed with a natural structure of pre-Lie bialgebras. By Propositions \ref{t4} and \ref{t41}, it is equivalent to consider whether the tensor product of $(A\oplus A^*,A,A^*,\omega(\cdot,\cdot)) $
and a right Novikov dialgebra with a special nondegenerate bilinear form can be endowed with a natural para-K$\ddot{\texttt{a}}$hler Lie algebra structure. By Remarks \ref{rmk1} and \ref{rmk2}, we need to consider whether there is a natural quadratic pre-Lie algebra structure on the tensor product of a quadratic pre-Novikov algebra and a right Novikov dialgebra with a special nondegenerate bilinear form. Note that there is a natural bilinear form on the induced pre-Lie algebra which is the product of the bilinear form on the pre-Novikov algebra and the bilinear form on the right Novikov dialgebra. Therefore, we present the following proposition.

\begin{proposition}\label{pro-q}
Let $(A, \vartriangleleft, \vartriangleright, \omega(\cdot,\cdot))$ be a quadratic pre-Novikov algebra, $(B, \dashv, \vdash)$ be a right Novikov dialgebra with a nondegenerate bilinear form $(\cdot,\cdot)$, and $(L:=A\otimes B, \circ)$ be the induced pre-Lie algebra from
$(A,\vartriangleleft, \vartriangleright)$ and $(B, \dashv, \vdash)$. Then $(L, \circ, \omega_p(\cdot,\cdot))$ is a quadratic pre-Lie algebra with $\omega_p(\cdot,\cdot)$ defined by
\begin{eqnarray}\label{def-1}
\omega_p(a\otimes x,b\otimes y):=\omega(a,b)(x, y),\;\;\;a, b\in A,\;\;x, y\in B,
\end{eqnarray}
if and only if $(\cdot,\cdot)$ is symmetric and the following equality holds for all $a$, $b$, $c\in A$ and $x$, $y$, $z\in B$:
\begin{eqnarray}
&&\omega(b\circ c,a)(-(y,x\vdash z)-(y,z\dashv x)+(y,z\vdash x)+(y,x\dashv z))\\
	&&\quad\;\;+\omega(c\circ b,a)(-(y,z\dashv x)+(y,z\vdash x))\nonumber\\
	&&\quad\;\;+\omega(a\circ c,b)((x\vdash y,z)-(y,x\vdash z))\nonumber\\
	&&\quad\;\;+\omega(c\circ a,b)((x\vdash y,z)+(y\dashv x,z)+(y,z\dashv x))=0.\nonumber
\end{eqnarray}
.
\end{proposition}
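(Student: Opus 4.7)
The plan is to translate the quadratic pre-Lie axiom $\omega_p(u\circ v,w)=-\omega_p(v,u\circ w-w\circ u)$ into an identity in six variables $a,b,c\in A$ and $x,y,z\in B$, and then reduce it, via the quadratic pre-Novikov relations \eqref{t11}--\eqref{t12} and the quasi-Frobenius Novikov identity \eqref{qn} for $(A,\circ)$, to the expression displayed in the proposition.

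First I would address skew-symmetry of $\omega_p$. Since $\omega$ is already skew-symmetric on $A$, the requirement $\omega_p(a\otimes x,b\otimes y)=-\omega_p(b\otimes y,a\otimes x)$ reduces to $(x,y)=(y,x)$, so $(\cdot,\cdot)$ must be symmetric, and conversely symmetry of $(\cdot,\cdot)$ forces $\omega_p$ to be skew-symmetric. This handles the symmetry half of the equivalence.

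Assuming $(\cdot,\cdot)$ is symmetric, I would plug $u=a\otimes x$, $v=b\otimes y$, $w=c\otimes z$ into the quadratic pre-Lie axiom and expand using \eqref{def-di-pL} and \eqref{def-1}. The left-hand side becomes
\[
\omega(a\vartriangleright b,c)(x\vdash y,z)-\omega(b\vartriangleleft a,c)(y\dashv x,z),
\]
while the right-hand side produces four summands of the form $\pm\omega(b,?)(y,?)$, one for each term of $u\circ w-w\circ u$. Using \eqref{t11}--\eqref{t12} together with the skew-symmetry of $\omega$, I would rewrite each $\omega$-factor so that its first argument is a $\circ$-product; the resulting identity then involves the six quantities $\omega(a\circ b,c)$, $\omega(b\circ a,c)$, $\omega(a\circ c,b)$, $\omega(c\circ a,b)$, $\omega(b\circ c,a)$, $\omega(c\circ b,a)$, each paired with a monomial in the $B$-bilinear form.

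Finally, applying the quasi-Frobenius Novikov identity \eqref{qn} with $(a,b,c)$ and then with $(b,a,c)$ in place of $(a,b,c)$ lets me eliminate $\omega(a\circ b,c)$ and $\omega(b\circ a,c)$ in favour of the remaining four products. Collecting coefficients of $\omega(a\circ c,b)$, $\omega(c\circ a,b)$, $\omega(c\circ b,a)$, $\omega(b\circ c,a)$ reproduces exactly the identity displayed in the statement. Since every substitution used is a reversible rewrite by identities that hold unconditionally in $A$ and $B$, the resulting equation is equivalent to the quadratic pre-Lie axiom on pure tensors, and bilinearity extends the equivalence to all of $L$. The principal obstacle is purely bookkeeping: keeping accurate track of signs through the skew-symmetry substitutions and of which of the six $\omega$-products each rewritten term contributes to.
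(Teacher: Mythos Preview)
Your proposal is correct and follows essentially the same route as the paper. The paper's proof handles skew-symmetry exactly as you do, then expands $\omega_p((a\otimes x)\circ(b\otimes y),c\otimes z)+\omega_p(b\otimes y,[a\otimes x,c\otimes z])$ and rewrites each $\omega$-factor via \eqref{t11}--\eqref{t12} into the displayed four-term form; the reduction from six $\omega$-products to four that you flag as requiring \eqref{qn} is performed silently in the paper (note that \eqref{qn} is an immediate consequence of summing \eqref{t11} and \eqref{t12}, so there is no extra hypothesis hidden there).
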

\begin{proof}
Since $\omega(\cdot,\cdot)$ is skew-symmetric, $\omega_p(\cdot,\cdot)$ is skew-symmetric if and only if $(\cdot,\cdot)$ is symmetric.
Note that \begin{align*}
	&\omega_p((a\ot x)\circ (b\ot y),c\ot z)+\omega_p(b\ot y,[a\ot x,c\ot z ])\\
	&\;\; =\omega(a\vartriangleright b,c)(x\vdash y,z)-\omega(b\vartriangleleft a,c)(y\dashv x,z)\\
	&\quad\;\; +\omega(b,a\vartriangleright c)(y,x\vdash z)-\omega(b,c\vartriangleleft a)(y,z\dashv x)\\
	&\quad\;\;-\omega(b,c\vartriangleright a)(y,z\vdash x)+\omega(b,a\vartriangleleft c)(y,x\dashv z)\\
	&\;\;=\omega(b\circ c,a)(-(y,x\vdash z)-(y,z\dashv x)+(y,z\vdash x)+(y,x\dashv z))\\
	&\quad\;\;+\omega(c\circ b,a)(-(y,z\dashv x)+(y,z\vdash x))\\
	&\quad\;\;+\omega(a\circ c,b)((x\vdash y,z)-(y,x\vdash z))\\
	&\quad\;\;+\omega(c\circ a,b)((x\vdash y,z)+(y\dashv x,z)+(y,z\dashv x)).
\end{align*}
Then this conclusion holds.
\end{proof}

\begin{definition}\cite{HBG}
Let $(B, \diamond)$ be a right Novikov algebra. If there is a nondegenerate bilinear form $(\cdot,\cdot)$ on $B$ satisfying
\begin{eqnarray}\label{Nov-qua}
(x\diamond y, z)=-(x, y\diamond z+z\diamond y),\;\;\;x, y, z\in B,
\end{eqnarray}
then $(B, \diamond, (\cdot,\cdot))$ is called a {\bf quadratic right Novikov algebra}.
\end{definition}
\begin{remark}
By Proposition \ref{pro-q}, since $\omega(b\circ c,a)$, $\omega(c\circ b,a)$, $\omega(a\circ c,b)$, and $\omega(c\circ a,b)$ cannot be expressed in terms of each other in general, comparing the coefficients of $\omega(b\circ c,a)$, $\omega(c\circ b,a) $, $\omega(a\circ c,b)$ and $\omega(c\circ a,b)$, we obtain the following equalities:
\begin{align*}
	&-(y,x\vdash z)-(y,z\dashv x)+(y,z\vdash x)+(y,x\dashv z)=0,\\
	&-(y,z\dashv x)+(y,z\vdash x)=0,\\
	&(x\vdash y,z)-(y,x\vdash z)=0,\\
	&(x\vdash y,z)+(y\dashv x,z)+(y,z\dashv x)=0.
\end{align*}
By the nondegenerate property of $(\cdot,\cdot)$, the above equalities imply that
$(B,\dashv  ,\vdash ,(\cdot,\cdot))$
is a quadratic right Novikov algebra. \delete{specifically,
$\dashv =\vdash $
and Eq. \eqref{Rbilinear1} holds.}

 On the other  hand,  let $(B,\diamond,(\cdot,\cdot))$ be a quadratic right Novikov algebra. It is straightforward to verify that there is a para-K$\ddot{\texttt{a}}$hler Lie algebra $(\mathcal{G}(L:=A\ot B)\bowtie\mathcal{G}(L^*),\mathcal{G}(L),\mathcal{G}(L^*),\omega_p(\cdot, \cdot))$ which is isomorphic to the para-K$\ddot{\texttt{a}}$hler Lie algebra $(\mathcal{G}(L:=A\ot B)\bowtie\mathcal{G}(A^*\otimes B),\mathcal{G}(A\otimes B),\mathcal{G}(A^*\otimes B),\omega_p(\cdot, \cdot))$ obtained from $(A\oplus A^*,A,A^*,\omega(\cdot,\cdot)) $ and  $(B,\diamond,(\cdot,\cdot))$ by Proposition \ref{pro-q}.
 Hence, we only consider to construct  pre-Lie bialgebras by  pre-Novikov bialgebras and  quadratic right Novikov algebras.
\end{remark}

\delete{
Under the same assumption as in Theorem~\ref{t4}. Let $(A\oplus A^*,A,A^*,\omega(\cdot,\cdot)) $ be the double construction of quasi-Frobenius Novikov algebras associated with
$(A, \vartriangleleft, \vartriangleright)$ and $(A^\ast, \vartriangleleft\ast, \vartriangleright\ast)$.
Our objective is to elevate the conclusion of Theorem \ref{t5} to a bialgebraic version. Naturally, we must consider whether the tensor product of a pre-Novikov bialgebra and  a  right Novikov dialgebra $(B,\dashv,\vdash)$ with a special bilinear form $(\cdot,\cdot)$ can possess the structure of a pre-Lie bialgebra.

By Theorems \ref{t4} and \ref{t41}, this assertion is equivalent to the following: the tensor product of $(A\oplus A^*,A,A^*,\omega(\cdot,\cdot)) $
and the aforementioned right Novikov dialgebra (with the specified bilinear form) possesses a para-K?hler Lie algebra structure.
Assuming the conclusion holds, it requires determining the explicit form of the bilinear form on the right Novikov dialgebra.

By Theorem \ref{di-pre-L}, we have known that there is a pre-Lie algebra structure $\circ $ defined by Eq. \eqref{def-di-pL} on the tensor product of a pre-Novikov algebra and a right Novikov dialgebra.  Therefore by Theorem \ref{qN-PN} and \ref{qL-PL}, we have
\begin{align*}
	&0=\omega_p((a\ot x)\circ (b\ot y),c\ot z)+\omega_p(b\ot y,[a\ot x,c\ot z ])\\
	&\;\; =\omega(a\vartriangleright b,c)(x\vdash y,z)-\omega(b\vartriangleleft a,c)(y\dashv x,z)\\
	&\quad\;\; +\omega(b,a\vartriangleright c)(y,x\vdash z)-\omega(b,c\vartriangleleft a)(y,z\dashv x)\\
	&\quad\;\;-\omega(b,c\vartriangleright a)(y,z\vdash x)+\omega(b,a\vartriangleleft c)(y,x\dashv z)\\
	&\;\;=\omega(b\circ c,a)(-(y,x\vdash z)-(y,z\dashv x)+(y,z\vdash x)+(y,x\dashv z))\\
	&\quad\;\;+\omega(c\circ b,a)(-(y,z\dashv x)+(y,z\vdash x))\\
	&\quad\;\;+\omega(a\circ c,b)((x\vdash y,z)-(y,x\vdash z))\\
	&\quad\;\;+\omega(c\circ a,b)((x\vdash y,z)+(y\dashv x,z)+(y,z\dashv x)).
\end{align*}}

\subsection{A general construction of completed pre-Lie bialgebras from pre-Novikov bialgebras and quadratic $\mathbb{Z}$-graded right Novikov algebras}

Based on the discussion in Section \ref{3.1}, for lifting the conclusion of Theorem \ref{di-pre-L} to a bialgebraic version, we need to replace right Novikov dialgebras by right Novikov algebras.

First, we recall some basic facts about quadratic $\mathbb{Z}$-graded right Novikov algebras.

\begin{definition}\mlabel{def:quad}\cite{HBG}
Let $(B=\oplus_{i\in \ZZ}B_i, \diamond)$ be a $\ZZ$-graded right Novikov algebra. A bilinear form
$(\cdot,\cdot)$  on $B$ is called {\bf graded} if there exists some $m\in \ZZ$ such that
\begin{eqnarray*}
(B_i,B_j)=0, \;\;\; \text{for any $i$, $j\in \mathbb{Z}$ satisfying $i+j+m\neq 0$.}
\end{eqnarray*}
\delete{The bilinear form is called {\bf invariant} if it satisfies
\begin{eqnarray}\label{Rbilinear1}
(a\diamond b,c)=-(a, b\diamond c+c\diamond b),\quad a,b,c\in B.
\end{eqnarray}}
A {\bf quadratic  $\ZZ$-graded right Novikov algebra}, denoted by $(B=\oplus_{i\in \ZZ}B_i, \diamond,
(\cdot,\cdot))$, is a $\ZZ$-graded right Novikov algebra $(B,\diamond)$ together
with a symmetric  nondegenerate graded bilinear form
$(\cdot,\cdot)$ satisfying Eq. (\ref{Nov-qua}).  In particular, when $B=B_0$, then it is just a quadratic right Novikov algebra.
\end{definition}
For a quadratic $\ZZ$-graded right Novikov algebra $(B=\oplus_{i\in \ZZ}B_i, \diamond, (\cdot, \cdot))$, the nondegenerate symmetric bilinear form $(\cdot,\cdot)$ induces multilinear forms $(\cdot,\cdot)_k, k\geq 2$, by
\vspace{-.2cm}
{\small
\begin{equation} \label{eq:pairb}
(\cdot,\cdot)_k: (\underbrace{B\hatot \cdots \hatot
B}_{k\text{-fold}}) \ot (\underbrace{B \ot \cdots \ot
B}_{k\text{-fold}}) \to \bfk,
    \Big(\hspace{-.3cm}\sum_{i_1,\cdots,i_k,\alpha} a_{1i_1\alpha}\ot \cdots \ot a_{ki_k\alpha}, b_1\ot \cdots \ot b_k\Big)_k\coloneqq \hspace{-.4cm}\sum_{i_1,\cdots,i_k,\alpha} \prod_{\ell=1}^k(a_{\ell i_\ell\alpha}, b_\ell)
\end{equation}
}
with the notation in Eq.~\eqref{eq:ssum} and homogeneous elements $b_i\in B$.
Further the forms are {\bf left nondegenerate} in the sense that if
$$\Big(\sum_{i_1,\cdots, i_k,\alpha} a_{1i_1\alpha}\ot \cdots \ot a_{ki_k\alpha}, b_1\ot \cdots \ot b_k\Big)_k=\Big(\sum_{j_1,\cdots, j_k,\beta} b_{1j_1\beta}\ot \cdots \ot b_{kj_k\beta}, b_1\ot \cdots \ot b_k\Big)_k
$$
for all homogeneous elements $b_1, \ldots,b_k\in B$, then
$$\sum_{i_1,\cdots, i_k,\alpha} a_{1i_1\alpha}\ot \cdots \ot a_{ki_k\alpha}=\sum_{j_1,\cdots, j_k,\beta} b_{1j_1\beta}\ot \cdots \ot b_{kj_k\beta}.
$$

\begin{example}\label{Laurent-Bilinear}\cite{HBG}
Let $(B=\oplus_{i\in \mathbb{Z}}{\bf k}t^i, \diamond)$ be the $\mathbb{Z}$-graded right Novikov algebra given in Example \ref{rN-ex}.
Define a bilinear form $(\cdot,\cdot)$ on $B$ by
\begin{eqnarray}\label{Laurent-Bilinear-1}
(t^i,t^j)=\delta_{i+j+1,0}, ~~~~~i, j\in \mathbb{Z}.
\end{eqnarray}
Then $(B={\bf k}[t,t^{-1}], \diamond, (\cdot, \cdot))$
is a quadratic $\mathbb{Z}$-graded right Novikov algebra.
\end{example}

\begin{example}\label{quadratic-ex}\cite{HBG}
	Let $(B, \diamond)$ be  a $2$-dimensional right Novikov algebra  with a basis $\{x,y\}$ whose multiplication is given by
	\begin{eqnarray*}
		x\diamond x=0,~~x\diamond y=-2x,~~y\diamond x=x,~~y\diamond y=y.
	\end{eqnarray*}
	Define a bilinear form $( \cdot ,\cdot )$ on $B$ by
	$(x,x)=(y,y)=0,~~(x,y)=(y,x)=1.$
 Then $(B, \diamond, (\cdot,\cdot ))$ is a quadratic right Novikov algebra.
\end{example}

\begin{lemma}\mlabel{lem:cop}\cite{HBG}
Let $(B=\oplus_{i\in \ZZ}B_i, \diamond, (\cdot, \cdot))$ be a quadratic $\ZZ$-graded right Novikov algebra. Let $\Delta:B\rightarrow B\hatot B$ be the dual of $\diamond$ under the left nondegenerate bilinear form in Eq.~\eqref{eq:pairb}, that is,
\begin{equation}\label{eq:coproduct-self}
(\Delta(x), y\otimes z)=(x, y\diamond z),\quad x,y,z\in B.
\end{equation}
Then $(B,\Delta)$ is a \complete right Novikov coalgebra.
\end{lemma}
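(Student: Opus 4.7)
My plan is to verify the two defining axioms~\eqref{top-Nov-Coalg-1}--\eqref{top-Nov-Coalg-2} of a \complete right Novikov coalgebra by dualizing the right Novikov identities~\eqref{e18}--\eqref{e19} against the iterated pairings $(\cdot,\cdot)_3$ of Eq.~\eqref{eq:pairb}, and then invoking left nondegeneracy.

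First I would iterate the defining relation $(\Delta(x), y\ot z) = (x, y\diamond z)$ to see how the six expressions appearing in~\eqref{top-Nov-Coalg-1}--\eqref{top-Nov-Coalg-2} interact with the three-fold pairing. A direct bookkeeping using the Sweedler-type notation of Eq.~\eqref{eq:ssum} gives
\begin{align*}
((\Delta\hatot\id)\Delta(x), a\ot b\ot c)_3 &= (\Delta(x),(a\diamond b)\ot c)_2 = (x,(a\diamond b)\diamond c),\\
((\id\hatot\Delta)\Delta(x), a\ot b\ot c)_3 &= (\Delta(x), a\ot(b\diamond c))_2 = (x, a\diamond(b\diamond c)),
\end{align*}
for all homogeneous $a,b,c\in B$. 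Each occurrence of $\widehat{\tau}$ merely permutes the slots of $a\ot b\ot c$ before the pairing, so one also obtains
\begin{align*}
((\id\hatot\widehat{\tau})(\Delta\hatot\id)\Delta(x), a\ot b\ot c)_3 &= (x,(a\diamond c)\diamond b),\\
((\id\hatot\widehat{\tau})(\id\hatot\Delta)\Delta(x), a\ot b\ot c)_3 &= (x, a\diamond(c\diamond b)),\\
((\widehat{\tau}\hatot\id)(\id\hatot\Delta)\Delta(x), a\ot b\ot c)_3 &= (x, b\diamond(a\diamond c)).
\end{align*}

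With these identifications, pairing~\eqref{top-Nov-Coalg-1} against $a\ot b\ot c$ reduces to $(x, (a\diamond b)\diamond c - a\diamond(b\diamond c) - (a\diamond c)\diamond b + a\diamond(c\diamond b)) = 0$, which is the image of~\eqref{e18} under $(x,\cdot)$; and pairing~\eqref{top-Nov-Coalg-2} against $a\ot b\ot c$ reduces to $(x, a\diamond(b\diamond c) - b\diamond(a\diamond c)) = 0$, which is the image of~\eqref{e19}. Since both identities hold for every $x\in B$ and every homogeneous triple, the left nondegeneracy of $(\cdot,\cdot)_3$ delivers Eqs.~\eqref{top-Nov-Coalg-1} and~\eqref{top-Nov-Coalg-2}.

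The main obstacle, and essentially the only genuine one, is the completed bookkeeping: I have to check that $\Delta$ is well defined as a map into $B\hatot B$ and that $(\Delta\hatot\id)\Delta(x)$ and $(\id\hatot\Delta)\Delta(x)$ are well-defined elements of $B\hatot B\hatot B$ whose pairings with homogeneous tensors collapse to finite sums. Both points follow from the graded hypothesis on $(\cdot,\cdot)$ together with finite-dimensionality of each $B_i$: for $x\in B_i$, the component $\Delta(x)_{jk}\in B_j\ot B_k$ is nonzero only when $j+k = i-m$ (so that $(B_j\ot B_k, B_{-j-m}\ot B_{-k-m})_2$ is nonzero and matches the degree of $(x, \cdot)$), and this constraint together with finite-dimensionality in each degree ensures that every pairing written above is a finite sum, legitimizing the calculation.
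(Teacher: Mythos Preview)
Your proof is correct: the five pairing identities you record are accurate, and they reduce Eqs.~\eqref{top-Nov-Coalg-1}--\eqref{top-Nov-Coalg-2} precisely to the right Novikov axioms~\eqref{e18}--\eqref{e19}, after which left nondegeneracy finishes the job; your grading count $j+k=i-m$ is also right and justifies the completed bookkeeping. The paper itself gives no proof of this lemma (it is quoted from \cite{HBG}), and your dualization argument is the natural and expected one.
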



\begin{definition}\label{pre-lia3}
Let $(L, \circ)$ be a pre-Lie algebra and $(L,
\delta)$ be a \complete pre-Lie coalgebra. If the following
compatibility conditions are satisfied:
\begin{align}
&(\delta-\widehat{\tau}\delta)(a\circ b)-(L_{\circ}(a)\hatot \id)(\delta-\widehat{\tau}\delta)(b)-(\id\hatot L_{\circ}(a))(\delta-\widehat{\tau}\delta)(b)\label{eq:pre-Lbi1}\\
&\quad-(\id\hatot R_{\circ}(b))\delta(a)+(R_{\circ}(b)\hatot \id)\widehat{\tau}\delta(a)=0, \nonumber\\
&\delta(a\circ b-b\circ a)-(\id\hatot (R_{\circ}(b)-L_{\circ}(b)))\delta(a)-(\id \hatot(L_{\circ}(a)-R_{\circ}(a)))\delta(b)\label{eq:pre-Lbi2}\\
&\quad-(L_{\circ}(a)\hatot\id)\delta(b)+(L_{\circ}(b)\hatot\id)\delta(a)=0,\quad     a, b\in L,\nonumber
\end{align}
then we call $(L, \circ,\delta)$ a {\bf \complete pre-Lie bialgebra}. If $L$  is finite-dimensional,  $(L, \circ,\delta)$ is just the usual pre-Lie bialgebra.
\end{definition}

Next, we present our main result.

\begin{theorem}\label{thm-bi}
 Let $(A,\vartriangleleft,\vartriangleright,\alpha,\beta)$ be a pre-Novikov bialgebra and $(B=\oplus_{i\in \ZZ}B_i, \diamond, (\cdot, \cdot))$ be a
quadratic $\ZZ$-graded right Novikov algebra. Let
$(L:=A\otimes B,\circ)$ be the induced pre-Lie algebra from $(A,
\vartriangleleft,\vartriangleright)$ and $(B, \diamond)$, $\Delta:B\rightarrow
B\hatot B$ be the linear map defined by Eq. \eqref{eq:coproduct-self}, and $\delta:L\rightarrow L\widehat{ \otimes }L$ be the linear map defined in Eq. \eqref{co-dipL}. Then $(L, \circ, \delta)$ is a \complete pre-Lie bialgebra. Furthermore, if $(B, \diamond, (\cdot, \cdot))=({\bf k}[t,t^{-1}], \diamond, (\cdot, \cdot))$ is the quadratic $\mathbb{Z}$-graded right Novikov algebra given in Example \ref{Laurent-Bilinear}, then $(L, \circ, \delta)$ is a \complete pre-Lie bialgebra if and only if $(A,\vartriangleleft,\vartriangleright,\alpha,\beta)$ is a pre-Novikov bialgebra.
\end{theorem}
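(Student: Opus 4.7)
My plan is to split the statement into the two implications. For the forward direction (``if'' part), the pre-Lie algebra structure $(L,\circ)$ is already supplied by Theorem~\ref{di-pre-L}, since any right Novikov algebra $(B,\diamond)$ is a right Novikov dialgebra with $\dashv=\vdash=\diamond$. Lemma~\ref{lem:cop} gives a completed right Novikov coalgebra $(B,\Delta)$ dual to $\diamond$, which becomes a completed right Novikov co-dialgebra with $\Delta_\alpha=\Delta_\beta=\Delta$; combined with the pre-Novikov coalgebra part $(A,\alpha,\beta)$, Theorem~\ref{tt6} produces the completed pre-Lie coalgebra $(L,\delta)$ defined by Eq.~\eqref{co-dipL}. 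So only the two bialgebra compatibility conditions \eqref{eq:pre-Lbi1} and \eqref{eq:pre-Lbi2} remain to be verified.

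To verify these on pure tensors $a\otimes x$, $b\otimes y\in L$, I would expand each term using the explicit formulas $(a\otimes x)\circ(b\otimes y)=a\vartriangleright b\otimes x\diamond y - b\vartriangleleft a\otimes y\diamond x$ and $\delta(a\otimes x)=\beta(a)\bullet\Delta(x)-\widehat{\tau}\alpha(a)\bullet\widehat{\tau}\Delta(x)$. The critical step is to normalize all $B$-factors: the invariance relation \eqref{eq:coproduct-self} together with the right Novikov identities \eqref{e18}-\eqref{e19} allows expressions such as $\Delta(x\diamond y)$ and $\widehat{\tau}\Delta(x\diamond y)$ to be rewritten as combinations of $(\id\hatot L_{\diamond}(\cdot))\Delta(\cdot)$, $(R_{\diamond}(\cdot)\hatot\id)\widehat{\tau}\Delta(\cdot)$ and similar atoms built from $\Delta(x)$ and $\Delta(y)$ alone. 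Once every term is in this normal form, both \eqref{eq:pre-Lbi1} and \eqref{eq:pre-Lbi2} split by $B$-atom; for each atom, the vanishing of the $A$-coefficient corresponds to exactly one of the eight pre-Novikov bialgebra axioms \eqref{lfd1}-\eqref{lfd8} applied to $a$ and $b$.

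For the converse (``only if'') part, specialize $B=\bfk[t,t^{-1}]$ with $\diamond$ and $(\cdot,\cdot)$ as in Examples~\ref{rN-ex} and \ref{Laurent-Bilinear}, so that $\delta$ has the explicit form \eqref{pre-Lie-coproduct}. Substituting $at^i$ and $bt^j$ into \eqref{eq:pre-Lbi1} and \eqref{eq:pre-Lbi2} yields infinite sums indexed by $\ZZ$. Following the strategy of Theorem~\ref{t5} and Theorem~\ref{t6}, careful choices of $(i,j)$ together with comparison of coefficients of well-selected monomials $t^p\otimes t^q\otimes t^r$ in $L\hatot L\hatot L$ should isolate each of \eqref{lfd1}-\eqref{lfd8} one by one.

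The main obstacle is the forward direction. Each of \eqref{eq:pre-Lbi1} and \eqref{eq:pre-Lbi2} expands into a large number of terms once $\circ$ and $\delta$ are inserted, and the bookkeeping combining $\alpha,\beta$ on $A$ with the $\Delta$-normalizations on $B$ is where the computation gets heavy. A potential shortcut would be to invoke Propositions~\ref{t4} and \ref{t41}, constructing the pre-Lie bialgebra as the pre-Lie analogue of a para-K\"ahler Lie algebra obtained by tensoring a double construction of quasi-Frobenius Novikov algebras with a quadratic right Novikov algebra; however, since the theorem allows infinite-dimensional $B$ and requires a \emph{completed} bialgebra, such an approach would first require completed analogues of these equivalences, so the direct expansion is likely the cleanest route.
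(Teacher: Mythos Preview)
Your outline is correct and matches the paper's approach: reduce to the two compatibility conditions \eqref{eq:pre-Lbi1}--\eqref{eq:pre-Lbi2} after invoking Theorem~\ref{di-pre-L}, Lemma~\ref{lem:cop} and Theorem~\ref{tt6}, normalize the $B$-side using the invariance \eqref{eq:coproduct-self}, then match the $A$-coefficients against \eqref{lfd1}--\eqref{lfd8}; for the converse, specialize to $\bfk[t,t^{-1}]$ and extract the axioms by choosing exponents and comparing coefficients. One correction to your mental model: in the paper the normalization runs the other way---expressions such as $\sum(x\diamond y_{1i\gamma})\otimes y_{2j\gamma}$ are rewritten in terms of $\Delta(x\diamond y)$, $\widehat{\tau}\Delta(x\diamond y)$, $\Delta(y\diamond x)$ together with two auxiliary atoms (defined implicitly through the form) that are \emph{not} built from $\Delta(x)$ and $\Delta(y)$ alone, so the matching is not a clean bijection with the eight axioms; each $A$-coefficient vanishes only after combining several of \eqref{lfd1}--\eqref{lfd8}.
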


\begin{proof}
By Lemma \mref{lem:cop}, $(B,\Delta)$ is a \complete right Novikov coalgebra. Then by
Theorem \ref{t6}, $(L, \delta)$
is a \complete pre-Lie coalgebra.
Let $a\otimes x, b\otimes y\in A\otimes B$ with $x\in B_i, y\in B_j$. We obtain
\begin{flalign*}
&(\delta-\widehat{\tau}\delta)((a\otimes x) \circ (b\otimes y))\\
	&\quad=\beta(a\vartriangleright b)\bullet\Delta(x\diamond y)-\widehat{\tau}\alpha(a\vartriangleright b)\bullet \widehat{\tau}\Delta(x\diamond y)
-\widehat{\tau}\beta(a\vartriangleright b)\bullet \widehat{\tau}\Delta(x\diamond  y)\\
&\quad\quad+\alpha(a\vartriangleright b)\bullet\Delta(x\diamond y)
-\beta(b\vartriangleleft a)\bullet\Delta(y\diamond x)+\widehat{\tau}\alpha(b\vartriangleleft a)\bullet\widehat{\tau}\Delta(y\diamond x)
\\
&\quad\quad+\widehat{\tau}\beta(b\vartriangleleft a)\bullet\widehat{\tau}\Delta(y\diamond x)-\alpha(b\vartriangleleft a)\bullet\Delta(y\diamond x),
\end{flalign*}
and
\small{
\begin{flalign*}
&(L_{\circ}(a\ot x)\hatot \id)(\delta-\widehat{\tau}\delta)(b\otimes y)\\
&\quad=\sum_{(b)}\sum_{i,j,\gamma}(a\vartriangleright b_{(1)}\ot b_{(2)})\bullet(x\diamond y_{1,i,\gamma}\ot y_{2,j,\gamma})-\sum_{(b)}\sum_{i,j,\gamma}(b_{(1)}\vartriangleleft a\ot b_{(2)})\bullet(y_{1,i,\gamma}\diamond x\ot y_{2,j,\gamma})\\
&\quad\quad-\sum_{[b]}\sum_{i,j,\gamma}(a\vartriangleright b_{[2]}\ot b_{[1]})\bullet(x \diamond y_{2,j,\gamma}\ot y_{1,i,\gamma})+\sum_{[b]}\sum_{i,j,\gamma}(b_{[2]}\vartriangleleft a\ot b_{[1]})\bullet(y_{2,j,\gamma}\diamond x\ot y_{1,i,\gamma})\\
&\quad\quad-\sum_{(b)}\sum_{i,j,\gamma}(a\vartriangleright b_{(2)}\ot b_{(1)})\bullet(x\diamond y_{2,j,\gamma}\ot y_{1,i,\gamma})+\sum_{(b)}\sum_{i,j,\gamma}(b_{(2)}\vartriangleleft a\ot b_{(1)})\bullet(y_{2,j,\gamma}\diamond x\ot y_{1,i,\gamma})\\
&\quad\quad+\sum_{[b]}\sum_{i,j,\gamma}(a\vartriangleright b_{[1]}\ot b_{[2]})\bullet(x\diamond y_{1,i,\gamma}\ot y_{2,j,\gamma})-\sum_{[b]}\sum_{i,j,\gamma}(b_{[1]}\vartriangleleft a\ot b_{[2]})\bullet(y_{1,i,\gamma}\diamond x\ot y_{2,j,\gamma}).
\end{flalign*}}
Let $e\in B_{k}$ and $f\in B_{l}$. Note that
{\small
\begin{eqnarray*}
&&(\widehat{\tau} \Delta(x\diamond y), e\otimes f)=\Big(\sum_{i,j,\gamma} (x\diamond y)_{2j\gamma}\hatot  (x\diamond y)_{1i\gamma}, e\otimes f\Big)=(x\diamond y, f\diamond e)=(y, x\diamond (f\diamond e)),\\
&&\Big(\sum_{i,j,\gamma} (x\diamond y_{1i\gamma}\otimes y_{2j\gamma}), e\otimes f\Big)=\sum_{i,j,\gamma} (y_{1i\gamma}\otimes y_{2j\gamma}, ( x\diamond e)\otimes f)=(y, (x\diamond e)\diamond f)\\
&&=\Big(y, (x\diamond f)\diamond e+x\diamond (e\diamond f)-x\diamond (f\diamond e)\Big).
\end{eqnarray*}
}
By the nondegeneracy of $(\cdot,\cdot)$, we have
\begin{equation}
\label{bialg-eq1}\sum_{i,j,\gamma} (x\diamond y_{1i\gamma}\otimes y_{2j\gamma})=-\widehat{\tau} \Delta(x\diamond y)+\sum_{i,j,\gamma} g_{i\gamma}\otimes h_{j\gamma},
\end{equation}
where $\sum_{i,j,\gamma} g_{i\gamma}\otimes h_{j\gamma}\in B\hatot  B$ is chosen such that $(\sum_{i,j,\gamma} g_{i\gamma}\otimes h_{j\gamma}, e\otimes f)=(y, (x\diamond f)\diamond e+x\diamond (e\diamond f))$.
Similarly, we obtain
{\small
\begin{eqnarray}
\mlabel{bialg-eq2}&&\sum_{i,j,\gamma} (y_{1i\gamma}\otimes x\diamond y_{2j\gamma})=\Delta(x\diamond y),\;\;\sum_{i,j,\gamma} (y_{1i\gamma}\otimes y_{2j\gamma}\diamond x)=-\Delta(x\diamond y)-\sum_{i,j,\gamma} k_{i\gamma}\otimes l_{j\gamma},\\
\mlabel{bialg-eq3}&&\sum_{i,j,\gamma} (y_{1i\gamma}\diamond x\otimes y_{2j\gamma} )
=\widehat{\tau}\Delta(x\diamond y)-\sum_{i,j, \gamma}g_{i\gamma}\otimes h_{j\gamma}+\Delta(y\diamond x)+\sum_{i,j,\gamma} k_{i\gamma}\otimes l_{j\gamma},\\
\mlabel{bialg-eq5}&&\sum_{i,j,\gamma} (x_{1i\gamma}\otimes y \diamond x_{2j\gamma})=\Delta(y\diamond x),\;\;\sum_{i,j,\gamma} (y \diamond x_{1i\gamma}\otimes x_{2j\gamma})=-\widehat{\tau} \Delta(y\diamond x)+\sum_{i,j,\gamma} g_{i\gamma}\otimes h_{j\gamma},\\
\mlabel{bialg-eq7}&&\sum_{i,j,\gamma} (x_{1i\gamma}\diamond y\otimes x_{2j\gamma})
=\widehat{\tau} \Delta(y\diamond x)-\sum_{i,j,\gamma}g_{i\gamma}\otimes h_{j\gamma}+\Delta(x\diamond y)+\sum_{i,j,\gamma} k_{i\gamma}\otimes l_{j\gamma},\\
\label{bialg-eq8}&&\sum_{i,j,\gamma} (x_{1i\gamma}\otimes x_{2j\gamma}\diamond y)=-\Delta(y\diamond x)-\sum_{i,j,\gamma} k_{i\gamma}\otimes l_{j\gamma},
\end{eqnarray}}
where $\sum_{i,j,\gamma} k_{i\gamma}\otimes l_{j\gamma}\in B\hatot  B$ is chosen so that $(\sum_{i,j,\gamma} k_{i\gamma}\otimes l_{j\gamma}, e\otimes f)=(y, e\diamond (f\diamond b))$. Applying Eqs. (\mref{bialg-eq1})-(\mref{bialg-eq8}) and Eqs. \eqref{lfd1}-\eqref{lfd8}, we obtain
{\small
\begin{flalign*}
&(\delta-\widehat{\tau}\delta)((a\ot x) \circ (b\ot y))-(L_{\circ}(a\ot x)\hatot \id)(\delta-\widehat{\tau}\delta)(b\ot y)-(\id\hatot L_{\circ}(a\ot x))(\delta-\widehat{\tau}\delta)(b\ot y)\\
&\quad\quad-(\id\hatot R_{\circ}(b\ot y))\delta(a\ot x)+(R_{\circ}(b\ot y)\hatot \id)\widehat{\tau}\delta(a\ot x)\\
&\quad=(\id_{L\widehat{\ot} L}-\widehat{\tau})(((\alpha+\beta)(a\vartriangleright b)-\sum_{(b)}(b_{(1)}\ot (b_{(2)}\vartriangleleft a+ a\vartriangleright b_{(2)})+b_{(2)}\ot (a\vartriangleright b_{(1)}+b_{(1)}\vartriangleleft a))\\
&\quad\quad-\sum_{[b]}(b_{[1]}\ot( a\vartriangleright b_{[2]}+b_{[2]}\vartriangleleft a)+b_{[2]}\ot (b_{[1]}\vartriangleleft a+a\vartriangleright b_{[1]}))
- \sum_{[a]}a_{[1]}\vartriangleright b\ot a_{[2]})\bullet \Delta(x\diamond y))\\
&\quad\quad+(\id_{L\widehat{\ot} L}-\widehat{\tau})(-(\alpha+\beta)(b\vartriangleleft a)+\sum_{(b)}b_{(1)}\vartriangleleft a\ot b_{(2)}
+ \sum_{[b]}b_{[1]}\vartriangleleft a\ot b_{[2]}\\
&\quad\quad+\sum_{(a)}a_{(1)}\ot (b\vartriangleleft a_{(2)}+a_{(2)}\vartriangleright b)
+ \sum_{[a]}a_{[2]}\ot (b\vartriangleleft a_{[1]}+a_{[1]}\vartriangleright b))\bullet \Delta(y\diamond x))\\
&\quad\quad+(\sum_{(b)}(-a\vartriangleright b_{(1)}\ot b_{(2)}+b_{(2)}\ot b_{(1)}\vartriangleleft a-b_{(1)}\vartriangleleft a\ot b_{(2)}+b_{(2)}\ot a\vartriangleright b_{(1)})\\
&\quad\quad+\sum_{[b]}(-a\vartriangleright b_{[1]}\ot b_{[2]}+b_{[2]}\ot b_{[1]}\vartriangleleft a-b_{[1]}\vartriangleleft a\ot b_{[2]}+b_{[2]}\ot a\vartriangleright b_{[1]})\\
&\quad\quad+\sum_{[a]}(b\vartriangleleft a_{[1]}\ot a_{[2]}+a_{[1]}\vartriangleright b\ot a_{[2]}-a_{[2]}\ot b\vartriangleleft a_{[1]}-a_{[2]}\ot  a_{[1]}\vartriangleright b))\bullet\sum_{i,j,\gamma}g_{i\alpha}\otimes h_{j\alpha}\\
&\quad\quad+(\sum_{(b)}(b_{(1)}\vartriangleleft a\ot b_{(2)}+ b_{(2)}\vartriangleleft a\ot b_{(1)}- b_{(1)}\ot b_{(2)}\vartriangleleft a- b_{(2)}\ot b_{(1)}\vartriangleleft a)\\
&\quad\quad+\sum_{[b]}(b_{[1]}\vartriangleleft a\ot b_{[2]}+ b_{[2]}\vartriangleleft a\ot b_{[1]}- b_{[1]}\ot b_{[2]}\vartriangleleft a- b_{[2]}\ot b_{[1]}\vartriangleleft a)\\
&\quad\quad+\sum_{(a)}(a_{(1)}\ot a_{(2)}\vartriangleright b-a_{(2)}\vartriangleright b\ot a_{(1)})+\sum_{[a]}(a_{[2]}\ot a_{[1]}\vartriangleright b-a_{[1]}\vartriangleright b\ot a_{[2]}))\bullet\sum_{i,j,\gamma}k_{i\alpha}\otimes l_{j\alpha}\\
&\quad=0,
\end{flalign*}}
and
\begin{flalign*}
&\delta((a\ot x)\circ (b\ot y)-(b\ot y)\circ (a\ot y))-(L_{\circ}(a\ot x)\hatot \id)\delta(b\ot x)+(L_{\circ}(b\ot y
)\hatot \id)\delta(a\ot x)\\
&\quad\quad-(\id\hatot (L_{\circ}(a\ot x)-R_{\circ}(a\ot x)))\delta( b\ot y)-(\id\hatot (R_{\circ}(b\ot y)-L_{\circ}(b\ot y)))\delta(a\ot x)\\
&\quad=(\beta(a\vartriangleright b+a\vartriangleleft b)-\sum_{(b)}(b_{(1)}\ot(a\vartriangleright b_{(2)} +a\vartriangleleft b_{(2)}+b_{(2)}\vartriangleleft a+b_{(2)}\vartriangleright a  ))\\
&\quad\quad-\sum_{[b]}(b_{[2]}\ot(a\vartriangleright b_{[1]}
+a\vartriangleleft b_{[1]}+b_{[1]}\vartriangleleft a+b_{[1]}\vartriangleright a  ))-\sum_{(a)}a_{(1)}\vartriangleleft b\ot a_{(2)})\bullet\Delta(x\diamond y)\\
&\quad\quad+(-\beta(b\vartriangleright a+b\vartriangleleft a)+\sum_{(a)}(a_{(1)}\ot(b\vartriangleright a_{(2)} +b\vartriangleleft a_{(2)}+a_{(2)}\vartriangleleft b+a_{(2)}\vartriangleright b  ))\\
&\quad\quad+\sum_{[a]}(a_{[2]}\ot(b\vartriangleright a_{[1]}
+b\vartriangleleft a_{[1]}+a_{[1]}\vartriangleleft b+a_{[1]}\vartriangleright b  ))+\sum_{(b)}b_{(1)}\vartriangleleft a\ot b_{(2)})\bullet\Delta(y\diamond x)\\
&\quad\quad+(-\tau\alpha(a\vartriangleright b+a\vartriangleleft b)+\sum_{(b)}(a\vartriangleright b_{(1)}+b_{(1)}\vartriangleleft a)\ot b_{(2)}+\sum_{[b]}(a\vartriangleright b_{[2]}+b_{[2]}\vartriangleleft a)\ot b_{[1]}\\
&\quad\quad+\sum_{[a]}a_{[2]}\ot (a_{[1]}\vartriangleright b+a_{[1]}\vartriangleleft b))\bullet \widehat{\tau}\Delta(x\diamond y)+(\tau\alpha(b\vartriangleright a+b\vartriangleleft a)-\sum_{(a)}(b\vartriangleright a_{(1)}\\
&\quad\quad+a_{(1)}\vartriangleleft b)\ot a_{(2)}-\sum_{[a]}(b\vartriangleright a_{[2]}+a_{[2]}\vartriangleleft b)\ot a_{[1]}-\sum_{[b]}b_{[2]}\ot (b_{[1]}\vartriangleright a\\
&\quad\quad+b_{[1]}\vartriangleleft a))\bullet \widehat{\tau}\Delta(y\diamond x)+(-\sum_{(b)}(a\vartriangleright b_{(1)}+b_{(1)}\vartriangleleft a)\ot b_{(2)}+\sum_{[b]}b_{[2]}\ot(a\vartriangleright b_{[1]}\\
&\quad\quad+a\vartriangleleft b_{[1]}+b_{[1]}\vartriangleleft a+b_{[1]}\vartriangleright a )+\sum_{(a)}(b\vartriangleright a_{(1)}+a_{(1)}\vartriangleleft b)\ot a_{(2)}-\sum_{[a]}a_{[2]}\ot(b\vartriangleright a_{[1]}\\
&\quad\quad+b\vartriangleleft a_{[1]}+a_{[1]}\vartriangleright b+a_{[1]}\vartriangleleft b))\bullet\sum_{i,j,\gamma}g_{i\gamma}\otimes h_{j\gamma}+(\sum_{(b)}(b_{(1)}\vartriangleleft a\ot b_{(2)}-b_{(1)}\ot(b_{(2)}\vartriangleleft a\\
&\quad\quad+b_{(2)}\vartriangleright a))+\sum_{[b]}(b_{[2]}\vartriangleleft a\ot b_{[1]}-b_{[2]}\ot(b_{[1]}\vartriangleleft a+b_{[1]}\vartriangleright a))+\sum_{(a)}(a_{(1)}\ot(a_{(2)}\vartriangleright b+a_{(2)}\vartriangleleft b)\\
&\quad\quad-a_{(1)}\vartriangleleft b\ot a_{(2)})+\sum_{[a]}(a_{[2]}\ot(a_{[1]}\vartriangleright b+a_{[1]}\vartriangleleft b)-a_{[2]}\vartriangleleft b\ot a_{[1]}))\bullet\sum_{i,j,\gamma}k_{i\gamma}\otimes l_{j\gamma}\\
&\quad=0.
\end{flalign*}
Therefore, $(L, \circ, \delta)$ is a \complete pre-Lie bialgebra.

If $(B, \diamond, (\cdot, \cdot))=({\bf k}[t,t^{-1}], \diamond, (\cdot, \cdot))$
 and $(L, \circ, \delta)$ is a \complete pre-Lie bialgebra, then
 $(A, \vartriangleleft,\vartriangleright)$ is a pre-Novikov algebra and $(A, \alpha,\beta)$ is a pre-Novikov coalgebra by Theorems \ref{t5} and \ref{t6}   respectively. Note that the linear map $\delta$ is given by Eq. \eqref{pre-Lie-coproduct}. Hence it suffices to  check Eqs. \eqref{lfd1}-\eqref{lfd8}.

By Eq. \eqref{eq:pre-Lbi1}, for all  $a, b\in A$, we compute
\begin{flalign*}
0=&(\delta-\widehat{\tau}\delta)(at^j\circ bt^k)-(L_{\circ}(at^j)\hatot \id)(\delta-\widehat{\tau}\delta)(bt^k)-(\id\hatot L_{\circ}(at^j))(\delta-\widehat{\tau}\delta)(bt^k)\\
&\quad-(\id\hatot R_{\circ}(bt^k))\delta(at^j)+(R_{\circ}(bt^k)\hatot \id)\widehat{\tau}\delta(at^j)\\
=&\sum_{i\in \mathbb{Z}}(j(i+1)(\sum_{(a\vartriangleright b)}(a\vartriangleright b)_{(1)}t^{-i-2}\ot (a\vartriangleright b)_{(2)}t^{i+j+k-1}-\sum_{[a\vartriangleright b]}(a\vartriangleright b)_{[2]}t^{i+j+k-1}\ot (a\vartriangleright b)_{[1]}t^{-i-2})\\
&-k(i+1)(\sum_{(b\vartriangleleft a)}(b\vartriangleleft a)_{(1)}t^{-i-2}\ot (b\vartriangleleft a)_{(2)}t^{i+j+k-1}-\sum_{[b\vartriangleleft a]}(b\vartriangleleft a)_{[2]}t^{i+j+k-1}\ot (b\vartriangleleft a)_{[1]}t^{-i-2})\\
&-j(i+1)(\sum_{(a\vartriangleright b)}((a\vartriangleright b)_{(2)}t^{i+j+k-1}\ot( a\vartriangleright b)_{(1)}t^{-i-2} -\sum_{[a\vartriangleright b]}(a\vartriangleright b)_{[1]}t^{-i-2}\ot (a\vartriangleright b)_{[2]}t^{i+j+k-1}) \\
&+k(i+1)(\sum_{(b\vartriangleleft a)}((b\vartriangleleft a)_{(2)}t^{i+j+k-1}\ot( b\vartriangleleft a)_{(1)}t^{-i-2} -\sum_{[b\vartriangleleft a]}(b\vartriangleleft a)_{[1]}t^{-i-2}\ot (b\vartriangleleft a)_{[2]}t^{i+j+k-1}))\\
&+\sum_{i\in \mathbb{Z}}((i+1)\sum_{(b)}(-ja \vartriangleright b_{(1)}t^{j-i-3}-(i+2)b_{(1)}\vartriangleleft a t^{j-i-3})\ot b_{(2)}t^{ k+i}\\
&+(i+1)\sum_{[b]}(ja \vartriangleright b_{[2]}t^{i+j+k-1}-(k+i)b_{[2]}\vartriangleleft a t^{i+j+k-1})\ot b_{[1]}t^ {-i-2}\\
&+(i+1)\sum_{(b)}(ja \vartriangleright b_{(2)}t^{i+j+k-1}-(k+i)b_{(2)}\vartriangleleft a t^{i+j+k-1})\ot b_{(1)}t^ {-i-2}\\
&+(i+1)\sum_{[b]}(-ja \vartriangleright b_{[1]}t^{j-i-3}-(i+2)b_{[1]}\vartriangleleft a t^{j-i-3})\ot b_{[2]}t^{ k+i})\\
&+\sum_{i\in \mathbb{Z}}((i+1)\sum_{(b)}b_{(1)}t^{-i-2}\ot (-ja \vartriangleright b_{(2)}t^{i+j+k-1}+(k+i)b_{(2)}\vartriangleleft at^{i+j+k-1})\\
&+(i+1)\sum_{[b]}b_{[2]}t^{k+i}\ot (ja \vartriangleright b_{[1]}t^{j-i-3}+(i+2)b_{[1]}\vartriangleleft at^{j-i-3})\\
&+(i+1)\sum_{(b)}b_{(2)}t^{k+i}\ot (ja \vartriangleright b_{(1)}t^{j-i-3}+(i+2)b_{(1)}\vartriangleleft at^{j-i-3})\\
&+(i+1)\sum_{[b]}b_{[1]}t^{-i-2}\ot (-ja \vartriangleright b_{[2]}t^{i+j+k-1}+(k+i)b_{[2]}\vartriangleleft at^{i+j+k-1}))\\
&+\sum_{i\in \mathbb{Z}}((i+1)(\sum_{(a)} a_{(1)}t^{-i-2}\ot(-(i+j)a_{(2)}\vartriangleright bt^{i+j+k-1}+kb \vartriangleleft a_{(2)}t^{i+j+k-1})\\
&+(i+1)(\sum_{[a]} a_{[2]}t^{i+j}\ot(-(i+2)a_{[1]}\vartriangleright bt^{k-i-3}-kb \vartriangleleft a_{[1]}t^{k-i-3}))\\
&+\sum_{i\in \mathbb{Z}}((i+1)(\sum_{(a)}((i+j)a_{(2)}\vartriangleright bt^{i+j+k-1}-kb \vartriangleleft a_{(2)}t^{i+j+k-1}) \ot a_{(1)}t^{-i-2}\\
&+(i+1)(\sum_{[a]}((i+2)a_{[1]}\vartriangleright bt^{k-i-3}+kb \vartriangleleft a_{[1]}t^{k-i-3})\ot a_{[2]}t^{i+j}).
\end{flalign*}

Setting $j=0$, $k=2$ and comparing the coefficients of $1\otimes t^{-1}$, we obtain
\begin{eqnarray*}
(\alpha+\beta)(b\vartriangleleft a)=(\id\otimes(R_{\vartriangleright}+L_{\vartriangleleft})(b))(\tau\alpha+\beta)(a)+(R_{\vartriangleleft}(a)\otimes\id)(\alpha+\beta)(b).
\end{eqnarray*}
Then we obtain Eq. \eqref{lfd8}.
Similarly, setting $j=1,k=-1$ and comparing the coefficients of $t^{-2}\otimes t^{-1}$ yield Eq. \eqref{lfd3};
setting $j=0,k=-1$ and comparing the coefficients of $t^{-2}\otimes t^{-2}$ yield Eq. \eqref{lfd4};
setting $j=-1,k=2$ and comparing the coefficients of $t^{-1}\otimes t^{-1}$ yield Eq. \eqref{lfd7}.

By Eq. \eqref{eq:pre-Lbi2}, for all  $a, b\in A$, we compute
{\small
\begin{flalign*}
0=&\delta(at^j\circ bt^k-bt^k\circ at^j)-(L_{\circ}(at^j)\hatot \id)\delta(bt^k)+(L_{\circ}(bt^k
)\hatot \id)\delta(at^j)\\
&\quad-(\id\hatot (L_{\circ}(at^j)-R_{\circ}(at^j)))\delta(bt^k)-(\id\hatot (R_{\circ}(bt^k)-L_{\circ}(bt^k)))\delta(at^j)\\
=&\sum_{i\in \mathbb{Z}}\big(j(i+1)(\sum_{(a\vartriangleright b)}(a\vartriangleright b)_{(1)}t^{-i-2}\ot (a\vartriangleright b)_{(2)}t^{i+j+k-1}-\sum_{[a\vartriangleright b]}(a\vartriangleright b)_{[2]}t^{i+j+k-1}\ot (a\vartriangleright b)_{[1]}t^{-i-2})\\
&-k(i+1)(\sum_{(b\vartriangleleft a)}(b\vartriangleleft a)_{(1)}t^{-i-2}\ot (b\vartriangleleft a)_{(2)}t^{i+j+k-1}-\sum_{[b\vartriangleleft a]}(b\vartriangleleft a)_{[2]}t^{i+j+k-1}\ot (b\vartriangleleft a)_{[1]}t^{-i-2})\\
&-k(i+1)(\sum_{(b\vartriangleright a)}(b\vartriangleright a)_{(1)}t^{-i-2}\ot (b\vartriangleright a)_{(2)}t^{i+j+k-1}-\sum_{[b\vartriangleright a]}(b\vartriangleright a)_{[2]}t^{i+j+k-1}\ot (b\vartriangleright a)_{[1]}t^{-i-2})\\
&+j(i+1)(\sum_{(a\vartriangleleft b)}(a\vartriangleleft b)_{(1)}t^{-i-2}\ot (a\vartriangleleft b)_{(2)}t^{i+j+k-1}-\sum_{[a\vartriangleleft b]}(a\vartriangleleft b)_{[2]}t^{i+j+k-1}\ot (a\vartriangleleft b)_{[1]}t^{-i-2})\big)\\
&+\sum_{i\in \mathbb{Z}}\big((i+1)\sum_{(a)}a_{(1)}t^{-i-2}\ot (-(i+j)a_{(2)}\vartriangleright bt^{i+j+k-1}+kb\vartriangleleft a_{(2)}t^{i+j+k-1})\\
&+(i+1)\sum_{[a]}a_{[2]}t^{i+j}\ot (-(i+2)a_{[1]}\vartriangleright bt^{k-i-3}-kb\vartriangleleft a_{[1]}t^{k-i-3})\\
&+(i+1)\sum_{(a)}a_{(1)}t^{-i-2}\ot(kb\vartriangleright a_{(2)}t^{i+j+k-1}-(i+j)a_{(2)}\vartriangleleft bt^{i+j+k-1})\\
&+(i+1)\sum_{[a]}a_{[2]}t^{i+j}\ot (-kb\vartriangleright a_{[1]}t^{k-i-3}-(i+2)a_{[1]}\vartriangleleft bt^{k-i-3})\big)\\
&+\sum_{i\in \mathbb{Z}}\big((i+1)\sum_{(b)}b_{(1)}t^{-i-2}\ot ((k+i)b_{(2)}\vartriangleright at^{i+j+k-1}-ja\vartriangleleft b_{(2)}t^{i+j+k-1})\\
&+(i+1)\sum_{[b]}b_{[2]}t^{k+i}\ot ((i+2)b_{[1]}\vartriangleright at^{j-i-3}+ja\vartriangleleft b_{[1]}t^{j-i-3})\\
&+(i+1)\sum_{(b)}b_{(1)}t^{-i-2}\ot (-j a\vartriangleright b_{(2)}t^{i+j+k-1}+(k+i) b_{(2)}\vartriangleleft a t^{i+j+k-1})\\
&+(i+1)\sum_{[b]}b_{[2]}t^{k+i}\ot (j a\vartriangleright b_{[1]}t^{j-i-3}+(i+2) b_{[1]}\vartriangleleft a t^{j-i-3})\big)\\
&+\sum_{i\in \mathbb{Z}}\big((i+1)\sum_{(b)}(-ja\vartriangleright b_{(1)}t^{j-i-3}-(i+2)b_{(1)}\vartriangleleft a t^{j-i-3})\ot b_{(2)}t^{k+i}\\
&+(i+1)\sum_{[b]}(ja\vartriangleright b_{[2]}t^{i+j+k-1}-(k+i)b_{[2]}\vartriangleleft a t^{i+j+k-1})\ot b_{[1]}t^{-i-2}\big)\\
&\sum_{i\in \mathbb{Z}}\big((i+1)\sum_{(a)}(kb\vartriangleright a_{(1)}t^{k-i-3}+(i+2)a_{(1)}\vartriangleleft b t^{k-i-3})\ot a_{(2)}t^{i+j}\\
&+(i+1)\sum_{[a]}(-kb\vartriangleright a_{[2]}t^{i+j+k-1}+(i+j)a_{[2]}\vartriangleleft b t^{i+j+k-1})\ot a_{[1]}t^{-i-2}\big).
\end{flalign*}}

Setting $j=1,k=0$ and comparing the coefficients of $t^{-2}\otimes 1$ yield Eq. \eqref{lfd1};
setting $j=k=1$ and comparing the coefficients of $t^{-1}\otimes 1$ yield Eq. \eqref{lfd2};
setting $j=k=0$ and comparing the coefficients of $t^{-3}\otimes 1$ yield Eq. \eqref{lfd5};
setting $j=2,k=0$ and comparing the coefficients of $t^{-1}\otimes 1$ yield Eq. \eqref{lfd6}.

This completes the proof.
\end{proof}

 Finally, we present two examples of  completed pre-Lie bialgebras.
\begin{example}\label{ex-pL-bialg1}
Let $(A,\vartriangleleft,\vartriangleright,\alpha,\beta)$ be the pre-Novikov bialgebra given in Example \ref{ex1} and $(B, \diamond, (\cdot, \cdot))=({\bf k}[t,t^{-1}], \diamond, (\cdot, \cdot))$ be the quadratic $\mathbb{Z}$-graded right Novikov algebra given in Example \ref{Laurent-Bilinear}. Then by Theorem \ref{thm-bi}, there is a \complete pre-Lie bialgebra $(A\ot B,\circ,\delta)$ which is defined by (for all $i,j,k\in \mathbb{Z}$)
\begin{flalign*}
e_1t^i\circ e_1t^j&=-je_1t^{i+j-1},e_1t^i\circ e_2t^j=e_2t^i\circ e_1t^j=-je_2t^{i+j-1},\quad e_2t^i\circ e_2t^j=0,\\
\delta(e_1t^k)&=\sum_{i\in \mathbb{Z}}(i+1)(-e_2t^{-i-2}\ot e_2t^{k+i}-e_2t^{k+i}\ot e_2t^{-i-2} ),\;\;\quad\quad \delta(e_2t^k)=0.
\end{flalign*}

Let  $(C,\tilde{\diamond})$ be the quadratic  right Novikov algebra given in Example \ref{quadratic-ex}. Then by Theorem \ref{thm-bi}, there is a  pre-Lie bialgebra $(A\ot C,\tilde{\circ},\tilde{\delta})$, where $\tilde{\circ}$ is defined by non-zero products:
\begin{flalign*}
	&(e_1\otimes y)\tilde{\circ}(e_1\ot y)=-e_1\ot y, \quad	(e_1\otimes x)\tilde{\circ}(e_1\ot y)=-e_1\ot x,\\
	&(e_1\otimes x)\tilde{\circ}(e_2\ot y)=(e_2\otimes x)\tilde{\circ}(e_1\ot y)=-e_2\otimes x,\;\; (e_1\otimes y)\tilde{\circ}(e_1\ot x)=2e_1\ot x,\\
	&(e_1\otimes y)\tilde{\circ}(e_2\ot x)=(e_2\otimes y)\tilde{\circ}(e_1\ot x)=2e_2\otimes x,\\
	&(e_1\otimes y)\tilde{\circ}(e_2\ot y)=	(e_2\otimes y)\tilde{\circ}(e_1\ot y)=-e_2\otimes y,
\end{flalign*}
and $\tilde{\delta}$ is defined by
\begin{flalign*}
	& \tilde{\delta}(e_1\otimes x)=-2(e_2\otimes x)\otimes (e_2\ot x),\quad \tilde{\delta}(e_2\otimes x)=0,\\
	&\tilde{\delta}(e_1\ot y)=(e_2\ot x)\otimes(e_2\otimes y)+(e_2\otimes y)\otimes(e_2\otimes x),\quad \tilde{\delta}(e_2\ot y)=0.
\end{flalign*}

\end{example}

\section{Completed pre-Lie bialgebras from  pre-Novikov Yang-Baxter equation}
In this section, we construct symmetric completed solutions of the $S$-equation from symmetric solutions of  the pre-Novikov Yang-Baxter equation.

First, we introduce some notations.
Let $(A,\vartriangleleft,\vartriangleright)$ be a pre-Novikov algebra and $(A,\circ)$ be the associated Novikov algebra. For convenience, we define binary operations $\odot$ and $\star$ on $A$ by
$$a\odot b:=a\vartriangleright b+b\vartriangleleft a,\qquad a\star b:=a\circ b+b\circ a, \quad  a,b \in A.$$
Let $V$ be a vector space with a binary operation $\ast$. Let $r=\sum\limits_{i}x_i\otimes y_i \in V\otimes V$ and $r^{'}=\sum\limits_{i}x_i^{'}\otimes y_i^{'} \in V\otimes V$. Set
\begin{eqnarray*}
r_{12}\ast r_{13}^{'}:=\sum_{i,j}x_i\ast x_j^{'}\otimes y_i\otimes y_j^{'},\;r_{13}\ast r_{23}^{'}:=\sum_{i,j}x_i\otimes x_j^{'}\otimes y_i\ast y_j^{'},\\
r_{12}\ast r_{23}^{'}:=\sum_{i,j}x_i\otimes y_i\ast x_j^{'} \otimes y_j^{'},\;r_{23}\ast r_{13}^{'}:=\sum_{i,j}x_j^{'}\otimes x_i\otimes y_i\ast y_j^{'}.\\
\end{eqnarray*}
\begin{definition}\cite{LH}
Let $(A,\vartriangleleft,\vartriangleright)$ be a pre-Novikov algebra, $r\in A\otimes A$ and $(A,\circ)$ be the associated Novikov algebra of $(A,\vartriangleleft,\vartriangleright)$. The following equation
\begin{flalign}
r_{12}\circ r_{13}+r_{23}\odot r_{13}-r_{12}\vartriangleleft r_{23}=0\label{yb}
\end{flalign}
is called the \textbf{pre-Novikov Yang-Baxter equation (p-NYBE)} in $(A,\vartriangleleft,\vartriangleright)$.
\end{definition}

\begin{proposition}\cite{LH}\label{pN-coboundary}
Let $(A,\vartriangleleft,\vartriangleright)$ be a pre-Novikov algebra and $r\in A\otimes A$ be a symmetric solution of p-NYBE in $(A,\vartriangleleft,\vartriangleright)$. Let $\alpha,\beta:A\rightarrow A\otimes A$ be linear maps defined by
\begin{flalign}
\alpha(a):&=(L_{\circ}(a)\otimes \id+\id \otimes(L_{\vartriangleright}+R_{\vartriangleleft})(a))\tau r,\label{e11}\\
\beta(a):&=-(L_{\vartriangleright}(a)\otimes \id+\id \otimes (L_{\circ}+R_{\circ})(a))r, \quad a\in A.\label{e12}
\end{flalign}
Then $(A,\vartriangleleft,\vartriangleright,\alpha,\beta)$ is a pre-Novikov bialgebra.
\end{proposition}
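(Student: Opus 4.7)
The plan is a direct coboundary-style verification. Write $r=\sum_k a_k\ot b_k$ and use the symmetry $\sum_k a_k\ot b_k=\sum_k b_k\ot a_k$ throughout. Substituting into (\ref{e11})-(\ref{e12}) and using $\tau r=r$ yields
\begin{align*}
\alpha(a)&=\sum_k (a\circ a_k)\ot b_k+\sum_k a_k\ot(a\vartriangleright b_k+b_k\vartriangleleft a),\\
\beta(a)&=-\sum_k (a\vartriangleright a_k)\ot b_k-\sum_k a_k\ot(a\circ b_k+b_k\circ a).
\end{align*}
Each of the twelve conditions defining a pre-Novikov bialgebra — the four coalgebra axioms (\ref{cob1})-(\ref{cob4}) and the eight bialgebra compatibility conditions (\ref{lfd1})-(\ref{lfd8}) — becomes, after substitution, a tensor identity in $A^{\ot 2}$ or $A^{\ot 3}$, linear or bilinear in $a$ (and $b$), whose terms are triple or quadruple products of $a,b,a_k,b_k$.

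For the coalgebra axioms, (\ref{cob4}) is immediate from the symmetry of $r$. The remaining ones, (\ref{cob1})-(\ref{cob3}), are handled uniformly: apply the pre-Novikov axioms (\ref{pN-1})-(\ref{pN-4}) to reorder nested products, and use symmetry of $r$ to relabel $a_k\leftrightarrow b_k$ whenever this aligns otherwise disparate terms. After these manipulations the difference between the two sides collapses into the image of the p-NYBE $r_{12}\circ r_{13}+r_{23}\odot r_{13}-r_{12}\vartriangleleft r_{23}$ under a single multiplication operator ($L_\vartriangleright(a)$, $L_\circ(a)$, or $R_\vartriangleleft(a)$) applied in one tensor slot, possibly composed with a transposition of tensor factors; since (\ref{yb}) vanishes, each such expression is zero. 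The eight bialgebra compatibilities (\ref{lfd1})-(\ref{lfd8}) follow the same template: after expansion, relabeling via symmetry, and applying the pre-Novikov axioms, each identity is shown to equal a sum of images of the p-NYBE under products of left and right multiplication operators built from $a$ and $b$, and therefore vanishes.

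The main obstacle is combinatorial bookkeeping rather than any deep algebraic step: each of the twelve tensor identities generates on the order of several dozen terms. Two simplifying devices make the computation tractable. First, grouping expressions in terms of the sums $\alpha+\beta$ and $\tau\alpha+\beta$, which are precisely the combinations appearing on the right-hand sides of (\ref{lfd1})-(\ref{lfd8}), produces substantial cancellation through the identity $a\circ c=a\vartriangleright c+a\vartriangleleft c$; each compatibility is much shorter in these combinations than when $\alpha$ and $\beta$ are treated separately. Second, the shorthand operations $a\odot b=a\vartriangleright b+b\vartriangleleft a$ and $a\star b=a\circ b+b\circ a$ used in the statement of the p-NYBE serve as bookkeeping tags: once a pair of terms organizes itself into the form $\odot$ or $\star$, that is the signal that the p-NYBE (\ref{yb}) is about to be applicable. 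With these two devices each of the twelve identities reduces to a short chain of applications of the pre-Novikov axioms plus one application of the p-NYBE.
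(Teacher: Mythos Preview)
The paper does not provide its own proof; the proposition is simply quoted from \cite{LH}. Your direct coboundary verification is the natural route and is presumably what the cited reference does, so the overall strategy is fine.

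One concrete inaccuracy: your claim that (\ref{cob4}) is ``immediate from the symmetry of $r$'' is wrong. Expanding $(\alpha\otimes\id)\alpha(a)$ one obtains, among other terms,
\[
\sum_{k,l}((a\circ a_k)\circ a_l)\otimes b_l\otimes b_k,\qquad
\sum_{k,l}(a_k\circ a_l)\otimes b_l\otimes(a\odot b_k),\qquad
\sum_{k,l}a_l\otimes((a\circ a_k)\odot b_l)\otimes b_k,
\]
and symmetry of the full expression in the last two tensor slots requires the pre-Novikov axioms (right-commutativity of the associated Novikov algebra, (\ref{e20}), (\ref{pN-4})) together with the p-NYBE, not merely $\tau r=r$. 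The uniform template you describe for (\ref{cob1})--(\ref{cob3}) is exactly what is needed for (\ref{cob4}) as well; it is not a special easy case.

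A smaller point worth checking as you carry out the details: in coboundary constructions of this type the usual division of labor is that the compatibility conditions (here (\ref{lfd1})--(\ref{lfd8})) follow from the algebra axioms and the symmetry of $r$ alone, while it is the coalgebra axioms (\ref{cob1})--(\ref{cob4}) that genuinely require the Yang--Baxter-type equation. If that pattern holds here --- and it is the pattern in the Lie and pre-Lie analogues --- then your description of each compatibility as reducing to ``a sum of images of the p-NYBE'' is misleading; you should expect those eight identities to cancel directly from the pre-Novikov axioms and $\tau r=r$, with (\ref{yb}) entering only for the four coalgebra axioms.
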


Let ($L=\oplus_{i\in \ZZ}L_i ,\circ )$ be a $\ZZ$-graded pre-Lie algebra.
Suppose that $r=\sum_{i,j,\alpha}a_{i\alpha}\otimes b_{j\alpha}\in L\hatot L$ as in Eq.~\meqref{eq:ssum}.
We denote
\begin{eqnarray*}
&&r_{12}\circ r_{13}:=\!\sum_{i,j,k,l,\alpha, \beta}a_{i\alpha}\circ a_{k\beta}\otimes b_{j\alpha}\otimes b_{l\beta},\;r_{12}\circ r_{23}:=\!\sum_{i,j,k,l,\alpha, \beta}a_{i\alpha}\otimes b_{j\alpha}\circ a_{k\beta}\otimes b_{l\beta},\\
&& [r_{13}, r_{23}]:=\!\sum_{i,j,k,l,\alpha,\beta} a_{i\alpha}\otimes a_{k\beta}\otimes (b_{j\alpha}\circ b_{l\beta}-b_{l\beta}\circ b_{j\alpha}),
\end{eqnarray*}
provided the sums make sense. Note that these sums make sense when $r=\sum_{i\in \ZZ, \alpha} c_{i,\alpha}\otimes d_{-i-s,\alpha}\in L\widehat{\otimes} L $ for some fixed $s\in \ZZ$, which is the case that we are interested in the sequel.

\begin{definition}
Let ($L=\oplus_{i\in \ZZ}L_i ,\circ )$ be a $\ZZ$-graded pre-Lie algebra.
If $r\in L\hatot L$ satisfies the {\bf $S$-equation } (see \cite{Bai1})
$$-r_{12}\circ r_{13}+r_{12}\circ r_{23}+[r_{13}, r_{23}]=0
$$
as an element in $L \hatot L \hatot L$, then $r$ is called a {\bf completed solution of the  $S$-equation} in $L$.
\end{definition}

Next, we present a construction of symmetric completed solutions of the  $S$-equation from symmetric solutions of the p-NYBE.
\begin{theorem}\mlabel{pro:p-NYBE}
Let $(A,\vartriangleleft,\vartriangleright)$ be a pre-Novikov algebra and $\big(B=\oplus_{i\in
\ZZ}B_i,\diamond, (\cdot, \cdot)\big)$ be a quadratic $\ZZ$-graded
right  Novikov algebra. Let $(L:=A\otimes B,\circ)$ be the induced pre-Lie
algebra from $(A,
\vartriangleleft,\vartriangleright)$ and $\big(B=\oplus_{i\in
	\ZZ}B_i,\diamond, (\cdot, \cdot)\big)$ in Theorem \ref{t5}. Suppose that $r=\sum_\alpha x_\alpha\otimes y_\alpha\in
A\otimes A$ is a symmetric solution of p-NYBE in $A$.
Then for a basis $\{e_p\}_{p\in \Pi}$ consisting of homogeneous
elements of $B$ and its homogeneous dual basis $\{f_p\}_{p\in
\Pi}$ associated with the bilinear form $(\cdot, \cdot)$, the
tensor element
\begin{eqnarray}\label{eq:S-e}
r_L\coloneqq \sum_{p\in \Pi} \sum_\alpha
(x_\alpha\otimes e_p)\otimes (y_\alpha\otimes f_p) \in
L \hatot L
\end{eqnarray}
is a symmetric completed solution of the $S$-equation in $L$. Furthermore, if the quadratic $\mathbb{Z}$-graded right Novikov algebra is
$(B, \diamond, (\cdot,\cdot))$ $=({\bf k}[t,t^{-1}], \diamond,
(\cdot, \cdot))$ from Example~\mref{Laurent-Bilinear}, then
\begin{eqnarray}\label{eq:affine} r_L=\sum_{ i\in
\mathbb{Z}}\sum_\alpha x_\alpha t^i\,\otimes \,y_\alpha t^{-i-1}\in L
\hatot  L
\end{eqnarray}
is a symmetric completed solution of the $S$-equation in $L$ if and
only if $r$ is a symmetric solution of  the p-NYBE in $A$.
\end{theorem}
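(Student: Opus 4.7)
The plan is to verify three things: (a) the symmetry of $r_L$, (b) the forward implication by a direct expansion that uses the quadratic structure on $B$, and (c) the reverse implication in the Laurent case by coefficient comparison.

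For (a), since $(\cdot,\cdot)$ is symmetric and $\{f_p\}_{p\in \Pi}$ is dual to $\{e_p\}_{p\in \Pi}$, the Casimir element $\sum_{p\in \Pi} e_p\otimes f_p$ is symmetric in $B\hatot B$. Combined with the symmetry $\tau r=r$, this yields $\widehat{\tau}(r_L)=r_L$ immediately.

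For (b), I would expand
$$
-r_{L,12}\circ r_{L,13}+r_{L,12}\circ r_{L,23}+[r_{L,13},r_{L,23}]
$$
using the pre-Lie product Eq.~\eqref{def-di-pL}. Every resulting summand is a tensor $\xi_{\alpha\beta}\otimes \eta_{pq}$ where $\xi_{\alpha\beta}\in A^{\otimes 3}$ is built from $\vartriangleleft,\vartriangleright,\circ$ applied to $x_\alpha, y_\alpha, x_\beta, y_\beta$ and $\eta_{pq}\in B^{\hat\otimes 3}$ is a tensor of $e_p,e_q,f_p,f_q$ with one $\diamond$-product. The central tool is the Casimir-type identity coming from the invariance Eq.~\eqref{Nov-qua},
$$
\sum_{p\in \Pi}(e_p\diamond b)\otimes f_p \;=\; -\sum_{p\in \Pi} e_p\otimes (b\diamond f_p+f_p\diamond b),\qquad b\in B,
$$
and its analogues obtained by swapping the roles of $e_p$ and $f_p$ or by applying $\diamond$ on the other side. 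Iteratively applying these identities rewrites each $\eta_{pq}$ in a common normal form, after which the symmetry $r=\tau r$ lets one identify the $A^{\otimes 3}$ coefficient with a linear combination of the three summands of the p-NYBE Eq.~\eqref{yb}. Since $r$ solves the p-NYBE, the whole expression vanishes, proving that $r_L$ is a completed solution of the $S$-equation.

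For (c), specialize to $B=\bfk[t,t^{-1}]$ with $e_p=t^p$, $f_p=t^{-p-1}$, so Eq.~\eqref{eq:S-e} becomes Eq.~\eqref{eq:affine}. Using $t^i\diamond t^j=it^{i+j-1}$, the three terms of the $S$-equation expand into explicit triple sums in $L\hatot L\hatot L$; each tensor factor on $B^{\hat\otimes 3}$ is of the form $t^a\otimes t^b\otimes t^c$ with $a+b+c=-3$. Assuming $r_L$ satisfies the $S$-equation, I would compare coefficients at carefully chosen triples $(a,b,c)$ for which the integer prefactors (coming from the various $i, j, -i-1, -j-1$ appearing in $t^i\diamond t^j$ evaluations) are distinct and nonzero; this isolates the three summands $r_{12}\circ r_{13}$, $r_{23}\odot r_{13}$ and $r_{12}\vartriangleleft r_{23}$ of the p-NYBE separately. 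Assembling the resulting linear system in $A^{\otimes 3}$ and using $r=\tau r$ to identify reindexed terms yields Eq.~\eqref{yb}, completing the converse.

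The main obstacle is the bookkeeping in step (b): the expansion produces many $\diamond$-composed expressions on pairs $\{e_p,f_p\}$, and the adjoint identity must be applied in the correct places and with the correct signs so that the $A^{\otimes 3}$ coefficients collapse exactly onto the p-NYBE. The asymmetric roles of $\vartriangleleft$ versus $\vartriangleright$ and the noncommutativity of $\diamond$ both amplify this difficulty. A secondary subtlety, in step (c), is that a single coefficient triple $(a,b,c)$ yields only one linear combination of the three p-NYBE summands; several such triples must be combined, and the resulting linear system must be checked to imply (and not merely be implied by) the single equation Eq.~\eqref{yb}.
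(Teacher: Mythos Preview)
Your proposal is correct and follows essentially the same approach as the paper: symmetry via the Casimir element, the forward direction via the invariance identities on $\sum_p e_p\otimes f_p$ to reduce the $B$-part to a common normal form, and the converse via coefficient comparison in the Laurent case. One simplification you can make in step (c): the paper shows that a \emph{single} coefficient triple (namely $1\otimes t^{-1}\otimes t^{-2}$) already yields Eq.~\eqref{yb} directly, so no linear system from multiple triples is needed.
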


\begin{proof} Applying the notation in Eq.~\eqref{eq:pairb}, we have
\begin{eqnarray*}
\Big(e_q\otimes e_s, \sum_{p\in \Pi}e_p\otimes
f_p\Big)=\sum_{p\in \Pi}(e_q, e_p)(e_s,f_p)=(e_q, e_s).
\end{eqnarray*}
Due to the bilinear form $(\cdot, \cdot)$ on $B$ is symmetric and nondegenerate, we obtain
$\sum_{p\in \Pi}e_p\otimes f_p=\sum_{p\in
\Pi}f_p\otimes e_p$. Hence, $r_L$ is also symmetric. Therefore we have
\begin{align*}
&\quad-r_{12}\circ r_{13}+r_{12}\circ r_{23}+[r_{13}, r_{23}]\\
&=\sum_{p,q\in \Pi}\sum_{\alpha,\beta} \Big( (-x_\alpha \vartriangleright x_\beta \ot e_p\diamond e_q+x_\beta\vartriangleleft x_\alpha \ot e_q\diamond e_p)\ot (y_\alpha\ot f_p)\ot( y_\beta\ot f_q)\\
&\quad+(x_\alpha\ot e_p)\ot(y_\alpha\vartriangleright x_\beta\ot f_p\diamond e_q-x_\beta\vartriangleleft y_\alpha\ot e_q\diamond f_p)\ot(y_\beta\ot f_q)\\
&\quad+(x_\alpha\ot e_p)\ot(x_\beta\ot e_q)\ot(y_\alpha\vartriangleright y_\beta\ot f_p\diamond f_q-y_\beta\vartriangleleft y_\alpha\ot f_q\diamond f_p\\
&\quad-y_\beta \vartriangleright y_\alpha\ot f_q\diamond f_p+y_\alpha\vartriangleleft y_\beta\ot f_p\diamond f_q)\Big).
\end{align*}
For $s$, $u$, $v\in \Pi$, adopting the notation in Eq.~\eqref{eq:pairb} we have
 \begin{eqnarray*}
	&\Big(e_s\otimes e_u\otimes e_v, \sum\limits_{p,q\in \Pi} e_p\diamond e_q\otimes f_p\otimes f_q\Big)=(e_s, e_u\diamond e_v),\\
	&\Big(e_s\otimes e_u\otimes e_v, \sum\limits_{p,q\in \Pi} e_q\diamond e_p\otimes f_p\otimes f_q\Big)=(e_s, e_v\diamond e_u),\\
	&\Big(e_s\otimes e_u\otimes e_v, \sum\limits_{p,q\in \Pi} e_p\otimes
	f_p\diamond  e_q\otimes f_q\Big)=-(e_s, e_u\diamond e_v+e_v\diamond  e_u).
\end{eqnarray*}
Then by the nondegenerate property of $(\cdot,\cdot)$ on $B$,
we obtain
\begin{eqnarray*}
\sum_{p,q\in \Pi}e_p\otimes f_p\diamond e_q\otimes
f_q=-\sum_{p,q\in \Pi}(e_p\diamond e_q\otimes  f_p\otimes
f_q+e_q\diamond e_p\otimes f_p\otimes f_q).
\end{eqnarray*}
Similarly, we get
\begin{eqnarray*}
&&\sum_{p,q\in \Pi} e_p\otimes e_q\otimes f_p\diamond f_q=-\sum_{p,q\in \Pi}(e_p\otimes e_q\diamond f_p\otimes f_q+e_p\otimes e_q\otimes f_q\diamond f_p),\\
&&\sum_{p,q\in \Pi}e_p\otimes e_q\diamond f_p\otimes
f_q=\sum_{p,q\in \Pi} e_q\diamond e_p\otimes  f_p\otimes
f_q,\\
&&\sum_{p,q\in \Pi} e_p\otimes e_q\otimes
f_q\diamond f_p=\sum_{p,q\in \Pi} e_p\diamond e_q\otimes
f_p\otimes f_q.
\end{eqnarray*}
Then we have
\begin{align*}
&\quad-r_{12}\circ r_{13}+r_{12}\circ r_{23}+[r_{13}, r_{23}]\\
&=\sum_{p,q\in \Pi}\sum_{\alpha,\beta} \Big( (x_\beta\vartriangleleft x_\alpha\ot e_q\diamond e_p)\ot( y_\alpha\ot f_p)\ot(y_\beta\ot f_q)\\
&\quad-(x_\alpha\ot e_q\diamond e_p)\ot(x_\beta\vartriangleleft y_\alpha \ot f_p)\ot(y_\beta\ot f_q)
-(x_\alpha\ot e_q\diamond e_p)\ot( y_\alpha\vartriangleright x_\beta\ot f_p)\ot(y_\beta\ot f_q)\\
&\quad-(x_\alpha\ot e_q\diamond e_p)\ot( x_\beta\ot f_p)\ot(y_\alpha\vartriangleright y_\beta\ot f_q)
-(x_\alpha\ot e_q\diamond e_p)\ot( x_\beta\ot f_p)\ot(y_\alpha\vartriangleleft y_\beta\ot f_q)\Big)\\
&\quad-\sum_{p,q\in \Pi}\sum_{\alpha,\beta} \Big((x_\alpha \vartriangleright x_\beta\ot e_p\diamond e_q)\ot(y_\alpha \ot f_p)\ot(y_\beta\ot f_q)\\
&\quad+(x_\alpha\ot e_p\diamond e_q)\ot(y_\alpha\vartriangleright x_\beta \ot f_p)\ot(y_\beta\ot f_q)+(x_\alpha\ot e_p\diamond e_q)\ot( x_\beta\ot f_p)\ot(y_\alpha\vartriangleright y_\beta\ot f_q)\\
&\quad+(x_\alpha\ot e_p\diamond e_q)\ot( x_\beta\ot f_p)\ot( y_\beta\vartriangleright y_\alpha\ot f_q)+(x_\alpha\ot e_p\diamond e_q)\ot( x_\beta\ot f_p)\ot(y_\alpha\vartriangleleft y_\beta\ot f_q)\\
&\quad+(x_\alpha\ot e_p\diamond e_q)\ot(x_\beta \ot f_p)\ot( y_\beta\vartriangleleft y_\alpha\ot f_q)\Big)\\
&=(r_{13}\vartriangleleft r_{12}-r_{12}\odot r_{23}-r_{13}\vartriangleright r_{23}-r_{13}\vartriangleleft r_{23})\bullet(e_q\diamond e_p\ot f_p\ot f_q)\\
&\quad-(r_{12}\vartriangleright r_{13}+r_{12}\vartriangleright r_{23}+r_{13}\star r_{23})\bullet (e_p\diamond e_q\ot f_p\ot f_q)\\
&=0.
\end{align*}
\delete{\textcolor{blue}{I added content in red, but it may not be necessary.}\\
\textcolor{red}{By Eq. \eqref{yb} and  the same argument as  in the proof of \cite[Theorem 4.4]{LH}, we obtain
\begin{align*}
&r_{13}\vartriangleleft r_{12}-r_{12}\odot r_{23}-r_{13}\vartriangleright r_{23}-r_{13}\vartriangleleft r_{23}=0,\\
&r_{12}\vartriangleright r_{13}+r_{12}\vartriangleright r_{23}+r_{13}\star r_{23}=0.
\end{align*}
Therefore, $-r_{12}\circ r_{13}+r_{12}\circ r_{23}+[r_{13}, r_{23}]=0$.}} Thus, $r_L$ is a symmetric completed
solution of  S-equation in $L$.

Moreover, if $(B, \diamond, (\cdot, \cdot))=({\bf k}[t,t^{-1}],
\diamond, (\cdot, \cdot))$, then we note that $\{t^{-i-1}|i\in \mathbb{Z}\}$ is the basis of ${\bf k}[t, t^{-1}]$ dual to $\{t^{i}|i\in
\mathbb{Z}\}$ associated with the bilinear form defined by $(t^i, t^j)=\delta_{i+j+1,0}$ for
all $i$, $j\in \mathbb{Z}$. Suppose that $r_L$ given by Eq. \eqref{eq:affine} is a symmetric completed
solution of  the $S$-equation in $L$.
Setting $i=0$, we conclude
that $r$ is symmetric since $\widehat{\tau }r_L=\sum_{ i\in
	\mathbb{Z}}\sum_\alpha y_\alpha t^{-i-1}\otimes x_\alpha t^{i} =\sum_{
	j\in \mathbb{Z}} \sum_\alpha y_\alpha t^{j}\otimes x_\alpha t^{-j-1}$.
We compute
\begin{align*}
&0=r_{12}\circ r_{13}+r_{12}\circ r_{23}+[r_{13}, r_{23}]\\
&\ \ =\sum_{i,j\in \mathbb{Z}}\sum_{\alpha,\beta}\Big((jx_\beta\vartriangleleft x_\alpha t^{i+j-1}-ix_\alpha\vartriangleright x_\beta t^{i+j-1})\ot y_\alpha t^{-i-1}\ot y_\beta t^{-j-1}\\
&\quad+x_\alpha t^i\ot(-(i+1)y_\alpha\vartriangleright x_\beta t^{j-i-2}-jx_\beta\vartriangleleft y_\alpha t^{j-i-2})\ot y_\beta t^{-j-1}\\
&\quad+x_\alpha t^i\ot x_\beta t^j\ot(-(i+1)y_\alpha\vartriangleright y_\beta t^{-i-j-3}+(j+1)y_\beta\vartriangleleft y_\alpha t^{-i-j-3}\\
&\quad+(j+1)y_\beta\vartriangleright y_\alpha t^{-i-j-3}-(i+1)y_\alpha\vartriangleleft y_\beta t^{-i-j-3})\Big).
\end{align*}
Comparing the coefficients of $1\otimes t^{-1}\otimes t^{-2}$
yields that $r$ is a symmetric solution of the p-NYBE in
the pre-Novikov algebra $A$.

This completes the proof.
\end{proof}

\begin{example}
Let $(A=\mathbf{k}e_1\oplus \mathbf{k}e_2\oplus \mathbf{k}e_3\oplus \mathbf{k}e_4,\vartriangleleft,\vartriangleright)$ be a 4-dimensional vector space with binary operations
$\vartriangleleft,\vartriangleright$  defined by non-zero products
\begin{flalign*}
	&e_1\vartriangleleft e_1=e_1,\quad e_1\vartriangleleft e_2= e_2\vartriangleleft e_1=e_2,\\
	&e_1\vartriangleright e_3=e_2\vartriangleright e_4=-2e_3,\qquad e_1\vartriangleright e_4=-2e_4,\\
	&e_1\vartriangleleft e_3=e_2\vartriangleleft e_4=e_3\vartriangleleft e_1=e_4\vartriangleleft e_2=e_3,\quad e_1\vartriangleleft e_4=e_4\vartriangleleft e_1=e_4.
\end{flalign*}
One can directly check that $(A,\vartriangleleft,\vartriangleright)$ is a pre-Novikov algebra and $r:= e_2\otimes e_3+e_3\otimes e_2$ is a symmetric solution of  the p-NYBE in the pre-Novikov algebra $(A,\vartriangleleft,\vartriangleright)$.

Let $(B, \diamond, (\cdot, \cdot))=({\bf k}[t,t^{-1}], \diamond, (\cdot, \cdot))$ be the quadratic $\mathbb{Z}$-graded right Novikov algebra given in Example \ref{Laurent-Bilinear}.
Then by Theorem \ref{pro:p-NYBE}, $r_L:=\sum_{ i\in
	\mathbb{Z}}\big( (e_2\ot t^i)\otimes( e_3\ot t^{-i-1})+(e_3\ot t^i)\otimes( e_2\ot t^{-i-1})  \big)$ is a symmetric completed solution of  the $S$-equation in $A\ot B$.
	
Let  $(C,\tilde{\diamond})$ be the quadratic  right Novikov algebra given in Example \ref{quadratic-ex}. Then by Proposition \ref{pro:p-NYBE}, $\tilde{r_L}:= (e_2\ot x)\otimes( e_3\ot y)+(e_3\ot x)\otimes( e_2\ot y)+(e_2\ot y)\otimes( e_3\ot x)+(e_3\ot y)\otimes( e_2\ot x)  $ is a symmetric solution of the $S$-equation in $A\ot C$.
\end{example}

\begin{proposition}\mlabel{pL-coboundary}
Let $(A, \circ)$ be a \complete pre-Lie algebra and $r\in A\otimes A$ be a symmetric completed solution of the $S$-equation in $(A,\circ)$. Let $\delta:A\rightarrow A\otimes A$ be a linear map defined by
\begin{equation} \label{eq:rdelta}
\delta(a):=(L_{\circ}(a)\hatot  \id+\id \hatot (L_{\circ}(a)-R_{\circ}(a)))r,\quad a\in A.
\end{equation}
 Then $(A,\circ,\delta)$ is a \complete pre-Lie bialgebra.
\end{proposition}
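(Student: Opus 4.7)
The plan is to verify the two requirements in Definition \ref{pre-lia3}: that $(A,\delta)$ is a completed pre-Lie coalgebra, and that the compatibility conditions \eqref{eq:pre-Lbi1}--\eqref{eq:pre-Lbi2} hold. Write $r=\sum_\alpha u_\alpha \hatot v_\alpha$, understood as a completed sum in the sense of \eqref{eq:ssum}. The symmetry $\widehat{\tau}r=r$ yields the useful closed form
\[
(\delta-\widehat{\tau}\delta)(a)=(R_{\circ}(a)\hatot \id - \id \hatot R_{\circ}(a))r,
\]
which will be invoked several times.

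For the coalgebra identity \eqref{co-pL}, I would substitute the definition \eqref{eq:rdelta} of $\delta$ into each of $(\id \hatot \delta)\delta$, $(\widehat{\tau}\hatot \id)(\id \hatot \delta)\delta$, $(\delta \hatot \id)\delta$, and $(\widehat{\tau}\hatot \id)(\delta \hatot \id)\delta$, producing sums of triple tensors in which $a$ acts as $L_{\circ}(a)$ or $R_{\circ}(a)$ on a single leg and two independent copies of $r$ are paired against each other. Using the left-symmetry axiom
\[
(x\circ y)\circ z - x\circ (y\circ z) = (y\circ x)\circ z - y\circ (x\circ z)
\]
to collect all terms in which the same leg carries $a$, I expect the residue to reduce to expressions of the form $(L_{\circ}(a)\hatot \id \hatot \id)$, $(\id \hatot L_{\circ}(a)\hatot \id)$, and $(\id \hatot \id \hatot (L_{\circ}(a)-R_{\circ}(a)))$ applied to the combination $-r_{12}\circ r_{13}+r_{12}\circ r_{23}+[r_{13},r_{23}]$, which vanishes by the $S$-equation. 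The symmetry of $r$ is indispensable for identifying and cancelling pairs of terms that differ by a flip of tensor factors.

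For the compatibility conditions, the strategy is more elementary: substitute the closed form for $\delta-\widehat{\tau}\delta$ into \eqref{eq:pre-Lbi1} and expand $\delta(a)$ and $\widehat{\tau}\delta(a)$ using \eqref{eq:rdelta} together with $\widehat{\tau}r=r$. Repeatedly applying the pre-Lie axiom to rewrite expressions such as $(a\circ b)\circ u_\alpha$ and $a\circ(b\circ u_\alpha)$, all terms should cancel without any further use of the $S$-equation. Condition \eqref{eq:pre-Lbi2} is handled in the same manner. The principal obstacle is bookkeeping in the coalgebra identity: each of the four terms in \eqref{co-pL} expands into roughly a dozen pieces, and the matching must be carried out so that every residual triple is genuinely a constituent of the $S$-equation rather than a spurious combination, with the flip-symmetry of $r$ and the left-symmetry axiom invoked at exactly the right places. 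Once this is done, all sums involved are concentrated in finitely-supported portions of $r$, so the argument transfers without modification to the completed setting.
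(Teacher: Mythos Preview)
Your proposal is correct and is precisely the direct verification that the paper defers to \cite[Proposition~6.1]{Bai1}; the paper's own proof is a one-line citation of that result, and your outline reconstructs the underlying computation (coalgebra identity reducing to the $S$-equation under symmetry of $r$, compatibility conditions following from the pre-Lie axiom and $[L_{\circ}(a),L_{\circ}(b)]=L_{\circ}([a,b])$ without the $S$-equation). Your closing remark about ``finitely-supported portions of $r$'' is slightly loose---the sums are genuinely infinite---but the transfer to the completed setting is unproblematic because the grading guarantees each homogeneous component involves only finitely many terms.
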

\begin{proof}
The proof follows from the same argument as presented in \cite[proposition 6.1]{Bai1}.
\end{proof}

\begin{proposition}
Under the same assumption as in Proposition~\ref{pro:p-NYBE}, let $\alpha_r,\beta_r:A\rightarrow A\ot A$  be linear maps defined by Eq. \eqref{e11} and Eq. \eqref{e12} with $r\in A\ot A$ and $\delta:L\rightarrow L\hatot L$ be a linear map defined by Eq. \eqref{co-dipL}. Then $(A,\vartriangleleft,\vartriangleright,\alpha_r,\beta_r)$ is a pre-Novikov bialgebra and hence $(L,\circ ,\delta)$ is a completed pre-Lie bialgebra by Theorem \ref{thm-bi}. It coincides with the completed pre-Lie bialgebra with $\delta$ defined by Eq. \eqref{eq:rdelta} through $r_L$ by Proposition \ref{pro:p-NYBE}, where $r_L$ is defined by Eq. \eqref{eq:S-e}. That is, the following diagram commutes:
$$  \xymatrix{
\text{ symmetric solutions}\atop \text{of p-NYBE} \ar[rr]^-{\rm Prop. \ref{pN-coboundary}}   \ar[d]_-{\rm Thm. \mref{pro:p-NYBE}}&& \text{pre-Novikov}\atop \text{ bialgebras} \ar[d]_-{\rm Thm. \mref{thm-bi}}\\
\text{symmetric solutions}\atop \text{of S-equations} \ar[rr]^-{\rm
Prop. \mref{pL-coboundary}}
            && \text{pre-Lie}\atop \text{ bialgebras}  }
$$
\end{proposition}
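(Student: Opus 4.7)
The plan is to traverse both paths of the diagram starting from a symmetric solution $r=\sum_\alpha x_\alpha\otimes y_\alpha\in A\otimes A$ of the p-NYBE, and to verify that the two resulting completed pre-Lie coproducts on $L=A\otimes B$ agree. By Proposition~\ref{pN-coboundary}, the maps $\alpha_r,\beta_r$ defined by Eqs.~\eqref{e11}--\eqref{e12} make $(A,\vartriangleleft,\vartriangleright,\alpha_r,\beta_r)$ into a pre-Novikov bialgebra, and Theorem~\ref{thm-bi} then produces a completed pre-Lie bialgebra $(L,\circ,\delta)$ whose coproduct is
$$\delta(a\otimes x)=\beta_r(a)\bullet\Delta(x)-\widehat\tau\alpha_r(a)\bullet\widehat\tau\Delta(x),$$
with $\Delta$ the completed right Novikov coproduct on $B$ dual to $\diamond$. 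Separately, Theorem~\ref{pro:p-NYBE} asserts that $r_L=\sum_{p,\alpha}(x_\alpha\otimes e_p)\otimes(y_\alpha\otimes f_p)$ is a symmetric completed solution of the $S$-equation in $L$, and Proposition~\ref{pL-coboundary} then produces a completed pre-Lie bialgebra $(L,\circ,\widetilde\delta)$ with $\widetilde\delta$ given by Eq.~\eqref{eq:rdelta}. Both bialgebra statements are immediate from the cited results; the content of the proposition is the identity $\delta=\widetilde\delta$.

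To prove $\delta=\widetilde\delta$, I would expand $\widetilde\delta(a\otimes x)$ using the pre-Lie product Eq.~\eqref{def-di-pL} to obtain the four families
$$T_1=\sum_\alpha(a\vartriangleright x_\alpha\otimes y_\alpha)\bullet(L_x\otimes\id)\Omega,\qquad T_2=-\sum_\alpha(x_\alpha\vartriangleleft a\otimes y_\alpha)\bullet(R_x\otimes\id)\Omega,$$
$$T_3=\sum_\alpha(x_\alpha\otimes a\circ y_\alpha)\bullet(\id\otimes L_x)\Omega,\qquad T_4=-\sum_\alpha(x_\alpha\otimes y_\alpha\circ a)\bullet(\id\otimes R_x)\Omega,$$
where $\Omega=\sum_p e_p\otimes f_p\in B\hatot B$ is the canonical tensor and $L_x,R_x$ denote left and right multiplication by $x$ in $B$.

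The key auxiliary step is to establish the four identities
$$(\id\otimes R_x)\Omega=\Delta(x),\qquad(R_x\otimes\id)\Omega=\widehat\tau\Delta(x),$$
$$(L_x\otimes\id)\Omega=(\id\otimes L_x)\Omega=-\Delta(x)-\widehat\tau\Delta(x)$$
in $B\hatot B$, which I would verify by pairing both sides against $e_s\otimes e_t,e_s\otimes f_t,f_s\otimes e_t,f_s\otimes f_t$ and using the invariance Eq.~\eqref{Nov-qua}, the symmetry of $(\cdot,\cdot)$, and the resolution of identity $\sum_p(z,f_p)e_p=\sum_p(z,e_p)f_p=z$. Substituting these into $T_1,\ldots,T_4$ and regrouping by $\Delta(x)$ versus $\widehat\tau\Delta(x)$, the coefficient of $\Delta(x)$ in $\widetilde\delta(a\otimes x)$ becomes $-\sum_\alpha[(a\vartriangleright x_\alpha\otimes y_\alpha)+(x_\alpha\otimes a\circ y_\alpha)+(x_\alpha\otimes y_\alpha\circ a)]=\beta_r(a)$, while the coefficient of $\widehat\tau\Delta(x)$ becomes $-\sum_\alpha[(a\vartriangleright x_\alpha\otimes y_\alpha)+(x_\alpha\vartriangleleft a\otimes y_\alpha)+(x_\alpha\otimes a\circ y_\alpha)]$, which matches $-\widehat\tau\alpha_r(a)$ after exploiting the symmetry $\tau r=r$ to rewrite Eq.~\eqref{e11}.

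The main obstacle is deriving the four $\Omega$-identities; once these are in hand, the remaining matching of the $A$-side brackets with $\beta_r(a)$ and $\widehat\tau\alpha_r(a)$ is routine bookkeeping, and the equality $\delta=\widetilde\delta$ follows immediately, so the diagram commutes.
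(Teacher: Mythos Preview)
Your proposal is correct and follows essentially the same route as the paper's proof: both arguments reduce the claim to the four identities relating $(L_x\otimes\id)\Omega$, $(R_x\otimes\id)\Omega$, $(\id\otimes L_x)\Omega$, $(\id\otimes R_x)\Omega$ to $\Delta(x)$ and $\widehat\tau\Delta(x)$ via the invariance of $(\cdot,\cdot)$, and then match the $A$-side coefficients with $\beta_r(a)$ and $-\widehat\tau\alpha_r(a)$ using the symmetry $\tau r=r$. Your packaging with $\Omega$ and $T_1,\dots,T_4$ is slightly cleaner notationally, but the substance is identical to the paper's computation.
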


\begin{proof}
Obviously, $(A,\vartriangleleft,\vartriangleright,\alpha_r,\beta_r)$ is a pre-Novikov bialgebra by Proposition \ref{pN-coboundary}.
By Theorem \ref{thm-bi}, there is a \complete pre-Lie
bialgebra structure $(L, \circ, \delta)$ on
$L$ where $\delta$ is induced from $\alpha_r,\beta_r$ by Eq.
(\mref{co-dipL}). For all $a\in A$, $b\in B$, we have
\begin{eqnarray*}
\delta(a\otimes b)&=& \sum_{i,j,\beta}\sum_\alpha(-(a\vartriangleright x_\alpha\otimes b_{1i\beta})\otimes (y_\alpha\otimes b_{2j\beta})-(x_\alpha\otimes b_{1i\beta})\otimes (a\star y_\alpha\otimes
b_{2j\beta})\\
&&-(x_\alpha\otimes b_{2j\beta})\otimes (a\circ y_\alpha\otimes
b_{1i\beta})-(a\odot x_\alpha\otimes b_{2j\beta})\otimes (y_\alpha\otimes
b_{1i\beta})).
\end{eqnarray*}
On the other hand,
{\small \begin{eqnarray*}
		&&(L_{\circ}(a\otimes b)\hatot  \id+\id \hatot ( L_{\circ}(a\otimes b)-R_{\circ}(a\otimes b)))\sum_{p\in \Pi}\sum_\alpha (x_\alpha\otimes e_p)\otimes (y_\alpha \otimes f_p)\\
		&=&\sum_{p\in \Pi}\sum_\alpha\Big((a\vartriangleright x_\alpha\otimes b\diamond e_p-x_\alpha\vartriangleleft
		a\otimes e_p\diamond b)\otimes (y_\alpha\otimes f_p) \\
		&&\qquad \quad +(x_\alpha\otimes
		e_p)\otimes (a\vartriangleright y_\alpha\otimes b\diamond f_p-y_\alpha\vartriangleleft
		a\otimes f_p\diamond b-y_\alpha\vartriangleright
		a\otimes f_p\diamond b+a\vartriangleleft y_\alpha\otimes b\diamond f_p)\Big).
\end{eqnarray*}}
For the basis elements $e_q$, $e_s\in B$, we have
\begin{eqnarray*}
\Big(e_q\otimes e_s, \sum_{p\in \Pi} b\diamond e_p\otimes
f_p\Big) &=&(e_q,b\diamond e_s)=(-e_q\diamond e_s-e_s\diamond e_q, b).
\end{eqnarray*}
Similarly, we obtain
\begin{eqnarray*}
\Big(e_q\otimes e_s, \sum_{i,j,\beta} b_{1i\beta}\otimes b_{2j\beta}\Big)=(e_q\diamond
e_s, b),~~~\Big(e_q\otimes e_s, \sum_{p\in \Pi} e_p\otimes
f_p\diamond b\Big)=(e_q\diamond e_s,b).
\end{eqnarray*}
 Hence by the nondegeneracy of $(\cdot, \cdot)$, we get
\begin{eqnarray*}
&&\sum_{p\in \Pi} b\diamond e_p\otimes f_p=\sum_{p\in \Pi}b\diamond f_p\otimes e_p=\sum_{p\in \Pi} e_p\otimes b\diamond f_p=-\sum_{i,j,\beta}(b_{1i\beta}\otimes b_{2j\beta}+b_{2j\beta}\otimes b_{1i\beta}),\\
&&\sum_{p\in \Pi}e_p\otimes f_p\diamond b=\sum_{i,j,\beta}
b_{1i\beta}\otimes b_{2j\beta},\qquad\quad\sum_{p\in \Pi}e_p\diamond
b\otimes f_p=\sum_{i,j,\beta} b_{2j\beta}\otimes b_{1i\beta}.
\end{eqnarray*}
Therefore, we obtain
\begin{eqnarray*}
	&&(L_{\circ}(a\otimes b)\hatot  \id+\id \hatot ( L_{\circ}(a\otimes b)-R_{\circ}(a\otimes b)))\sum_{p\in \Pi}\sum_\alpha (x_\alpha\otimes e_p)\otimes (y_\alpha \otimes f_p)\\
	&=&-\sum_{i,j,\beta}\sum_\alpha\Big((a\vartriangleright x_\alpha\otimes b_{1i\beta})\otimes
	(y_\alpha\otimes b_{2j\beta})+(a\vartriangleright x_\alpha\otimes b_{2j\beta})\otimes(y_\alpha\otimes b_{1i\beta})\\
	&&\hspace{0.4cm}+(x_\alpha\vartriangleleft a\otimes b_{2j\beta})\otimes (y_\alpha\otimes b_{1i\beta})
	+(x_\alpha\otimes b_{1i\beta})\otimes (a\vartriangleright y_\alpha\otimes b_{2j\beta})\\
	&&\hspace{0.4cm}+(x_\alpha\otimes b_{2j\beta})\otimes (a\vartriangleright y_\alpha\otimes b_{1i\beta})+(x_\alpha\otimes b_{1i\beta})\otimes (y_\alpha\vartriangleleft a\otimes b_{2j\beta})\Big)\\
	&&\hspace{0.4cm}+(x_\alpha\otimes b_{1i\beta})\otimes (y_\alpha\vartriangleright a\otimes b_{2j\beta})+(x_\alpha\otimes b_{2j\beta})\otimes (a\vartriangleleft y_\alpha\otimes b_{1i\beta})\\
	&&\hspace{0.4cm}+(x_\alpha\otimes b_{1i\beta})\otimes (a\vartriangleleft y_\alpha\otimes b_{2j\beta})\Big)\\
	&=& \delta(a\otimes b).
\end{eqnarray*}

This completes the proof.
\end{proof}

\noindent{\bf Acknowledgments.} This research is supported by the Zhejiang
Provincial Natural Science Foundation of China (No. Z25A010006) and
National Natural Science Foundation of China (No. 12171129).

\smallskip

\noindent
{\bf Declaration of interests. } The authors have no conflicts of interest to disclose.

\smallskip

\noindent
{\bf Data availability. } Data sharing is not applicable to this article as no new data were created or analyzed in this study.

\vspace{-.5cm}

\end{document}